\newtheorem{theorem}{Theorem}[section]
\newtheorem{corollary}[theorem]{Corollary}
\newtheorem{lemma}[theorem]{Lemma}
\newtheorem{proposition}[theorem]{Proposition}
\theoremstyle{definition}
\newtheorem{remark}[theorem]{Remark}
\numberwithin{equation}{section}
\newcommand{\GU}{\mathrm{GU}}
\newcommand{\G}{\mathrm{G}}
\newcommand{\GL}{\mathrm{GL}}
\newcommand{\SL}{\mathrm{SL}}
\newcommand{\Sp}{\mathrm{Sp}}
\newcommand{\PSO}{\mathrm{PSO}}
\newcommand{\SO}{\mathrm{SO}}
\newcommand{\SU}{\mathrm{SU}}
\newcommand{\Sz}{\mathrm{Sz}}
\renewcommand{\O}{\mathrm{O}}
\newcommand{\PSL}{\mathrm{PSL}}
\newcommand{\PSU}{\mathrm{PSU}}
\newcommand{\PGU}{\mathrm{PGU}}
\newcommand{\PSp}{\mathrm{PSp}}
\newcommand{\PGL}{\mathrm{PGL}}
\newcommand{\PGaL}{\mathrm{P\Gamma L}}
\newcommand{\POm}{\mathrm{P \Omega}}
\newcommand{\J}{\mathrm{J}}
\newcommand{\A}{\mathrm{Alt}}
\newcommand{\W}{\mathrm{W}}
\newcommand{\E}{\mathrm{E}}
\newcommand{\Al}{\mathrm{A}}
\renewcommand{\S}{\mathrm{Sym}}
\newcommand{\Q}{\mathrm{Q}}
\newcommand{\C}{\mathrm{C}}
\newcommand{\D}{\mathrm{D}}
\newcommand{\V}{\mathrm{V}}
\newcommand{\Fi}{\mathrm{Fi}}
\newcommand{\Aut}{\mathrm{Aut}}
\newcommand{\Out}{\mathrm{Out}}
\newcommand{\PG}{\mathrm{PG}}
\newcommand{\B}{\mathrm{B}}
\newcommand{\N}{\mathrm{N}}
\newcommand{\M}{\mathrm{M}}
\newcommand{\F}{\mathrm{F}}
\newcommand{\Dmc}{\mathcal{D}}
\newcommand{\Bmc}{\mathcal{B}}
\newcommand{\Pmc}{\mathcal{P}}
\newcommand{\Cmc}{\mathcal{C}}
\newcommand{\Smc}{\mathcal{S}}
\newcommand{\e}{\epsilon}
\renewcommand{\leq}{\leqslant}
\renewcommand{\geq}{\geqslant}
\renewcommand{\mod}[1]{\ (\mathrm{mod}{\ #1})}
\newcommand{\imod}[1]{\allowbreak\mkern4mu({\operator@font mod}\,\,#1)}
\begin{document}
 \title[Flag-transitive Block designs with prime replication number]{Flag-transitive block designs with prime replication number and almost simple groups}

 \author[S.H. Alavi]{Seyed Hassan Alavi}%
 \thanks{Corresponding author: S.H. Alavi}
 \address{Seyed Hassan Alavi, Department of Mathematics, Faculty of Science, Bu-Ali Sina University, Hamedan, Iran.
 }%
 \email{alavi.s.hassan@basu.ac.ir and  alavi.s.hassan@gmail.com (G-mail is preferred)}
 \author[M. Bayat]{Mohsen Bayat}%
 \address{Mohsen Bayat, Department of Mathematics, Faculty of Science, Bu-Ali Sina University, Hamedan, Iran.}%
 \email{mohsen0sayeq24@gmail.com}
 \author[J. Choulaki]{Jalal Choulaki}%
 %\thanks{}
 \address{Jalal Choulaki, Department of Mathematics, Faculty of Science, Bu-Ali Sina University, Hamedan, Iran.}
  \email{j.choulaki@gmail.com}
 \author[A. Daneshkhah]{Asharf Daneshkhah}%
 %\thanks{}
 \address{Asharf Daneshkhah, Department of Mathematics, Faculty of Science, Bu-Ali Sina University, Hamedan, Iran.
 }%
  \email{adanesh@basu.ac.ir}
%  \author[Sh. Zang Zarin]{Sheyda Zang Zarin}%
 %\thanks{}
% \address{Sheyda Zang Zarin, Department of Mathematics, Faculty of Science, Bu-Ali Sina University, Hamedan, Iran.
% }%

 \subjclass[]{05B05; 05B25; 20B25}%
 \keywords{Flag-transitive; $2$-design; automorphism group; almost simple group; large subgroup}
 \date{\today}%
 %\dedicatory{}%
 %\commby{}%

\begin{abstract}
  In this article, we study $2$-designs with prime replication number admitting a flag-transitive automorphism group. The automorphism groups of these designs are point-primitive  of almost simple or affine type. We determine $2$-designs with prime replication number admitting an almost simple automorphism group.
\end{abstract}

\maketitle

\section{Introduction}\label{sec:intro}

A $2$-$(v,k,\lambda)$ design $\Dmc$ is a pair $(\Pmc,\Bmc)$ with a set $\Pmc$ of $v$ points and a set $\Bmc$ of blocks such that each block is a $k$-subset of $\Pmc$ and each two distinct points are contained in $\lambda$ blocks. We say $\Dmc$ is nontrivial if $2 < k < v-1$, and symmetric if $v = b$, where $b$ is the number of blocks of $\Dmc$.  Each point of $\Dmc$ is contained in exactly $r=bk/v$ blocks which is called the \emph{replication number} of $\Dmc$. An \emph{automorphism} of a symmetric design $\Dmc$ is a permutation of the points permuting the blocks and preserving the incidence relation. The full automorphism group $\Aut(\Dmc)$ of $\Dmc$ is the group consisting of all automorphisms of $\Dmc$. A \emph{flag} of $\Dmc$ is a point-block pair $(\alpha, B)$ such that $\alpha \in B$. For $G\leq \Aut(\Dmc)$, $G$ is called \emph{flag-transitive} if $G$ acts transitively on the set of flags. The group $G$ is said to be \emph{point-primitive} if $G$ acts primitively on $\Pmc$. A group $G$ is said to be \emph{almost simple} with socle $X$ if $X\unlhd G\leq \Aut(X)$, where $X$ is a nonabelian simple group. We adopt the standard notation as in \cite{b:BHR-Max-Low,b:Atlas} when we deal with  almost simple groups with socle finite classical simple groups, while in the case where the socle is an exceptional simple  group, we sometimes use alternative Lie notation. We here write $\A_{n}$ and $\S_{n}$ for the alternating group and the symmetric group on $n$ letters, respectively, and we denote by ``$n$'' the cyclic group of order $n$.
Further notation and definitions in both design theory and group theory are standard and can be found, for example, in \cite{b:Atlas,b:Dixon,b:KL-90,b:lander}.

The main aim of this paper is to study $2$-designs with flag-transitive automorphism groups.
In 1988, Zieschang \cite{a:ZIESCHANG} proved that if an automorphism group $G$ of a $2$-design with $\gcd(r,\lambda)=1$ is flag-transitive, then $G$ is point-primitive group of almost simple or affine type. Such designs admitting an almost simple automorphism group with socle being an alternating group, a sporadic simple group or a finite simple exceptional group have been studied in \cite{a:ABD-Exp-coprime,a:Zhou-lam-large-sporadic,a:Zhan-nonsym-sprodic,a:Zhou-nonsym-alt, a:Zhu-sym-alternating}. This problem for case where the socle is a finite simple groups of Lie type is still open. This paper is devoted to studying flag-transitive designs with prime replication number $r$ in which case $r$ and $\lambda$ are coprime. We know two infinite families of examples of designs with prime replication number namely projective space $\PG(n-1,q)$ with $(q^{n-1}-1)/(q-1)$ prime and Witt-Bose-Shrikhande space $\W(2^n)$ with $2^n+1$ a Fermat prime. The projective spaces are natural examples of designs with $2$-transitive automorphism groups \cite{a:Kantor-2-trans} and the latter example arises from studying flag-transitive linear spaces \cite{a:delan-linear-space}. Our main result is Theorem \ref{thm:main} below.

\begin{theorem}\label{thm:main}
Let $\Dmc$ be a nontrivial $2$-$(v, k, \lambda)$ design with prime replication number $r$, and let $\alpha$ be a point of $\Dmc$. If $G$ is a flag-transitive automorphism group of $\Dmc$ of almost simple type with socle $X$, then   %$X=\PSL_{n}(q)$, $G_{\alpha}\cap X=\,^{\hat{}}[q^{n-1}]{:}\SL_{n-1}(q){\cdot} (q-1)$, $v=(q^{n}-1)/(q-1)$ and $r$ is a primitive divisor of $(q^{n-1}-1)/(q-1)$ with $n\geq 3$, or
$\lambda\in\{1,2,3,5\}$ and $v$, $k$, $\lambda$, $X$, $G_{\alpha}\cap X$ and $G$ are as in one of the lines in {\rm Table~\ref{tbl:main}} or one of the following holds:
\begin{enumerate}[{\quad \rm (a)}]
\item $\Dmc$ is the Witt-Bose-Shrikhande space $\W(2^n)$ with parameters $v=2^{n-1}(2^{n}-1)$, $b=2^{2n}-1$, $r=2^{n}+1$ Fermat prime, $k=2^{n-1}$ and $\lambda=1$, for $n=2^{2^m}\geq 16$. Moreover, $G=X=\PSL_{2}(2^n)$ and  $G_\alpha \cap X=\D_{2(2^{n}+1)}$;
\item $X=\PSL_{n}(q)$, $G_{\alpha}\cap X=\,^{\hat{}}[q^{n-1}]{:}\SL_{n-1}(q){\cdot} (q-1)$, $v=(q^{n}-1)/(q-1)$ and $r$ is a primitive divisor of $(q^{n-1}-1)/(q-1)$ with $n\geq 3$.
%\item $X=\PSp_{4}(q)$, $G_{\alpha}\cap X=\PSp_{2}(q^2){:}2$, $v=q^2(q^{2}-1)/2$, $b=q^2(q^2+1)/2$, $r=q^2+1$ Fermat prime, $k=q^2-1$, and $\lambda=2$. Moreover, if $B$ is a block containing $\alpha$, then $G_B\cap X$  is isomorphic to either $^{\hat{}}\Sp_{2}(q)^{2}:2$, or $^{\hat{}}\SO_{4}^{+}(q)$.
\end{enumerate}
\end{theorem}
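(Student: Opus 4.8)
The plan is to reduce the classification to an analysis of large maximal subgroups of almost simple groups and then to pin down the admissible parameters by arithmetic. First I would record the two basic identities $bk=vr$ and $\lambda(v-1)=r(k-1)$, together with Fisher's inequality $b\geq v$, which is valid since $\Dmc$ is nontrivial and hence $k<v$. Fisher's inequality gives $r\geq k$, whence $\lambda(v-1)=r(k-1)\leq r(r-1)<r^{2}$; as $\lambda\geq 1$ this yields $v<r^{2}$. In particular $\lambda=r(k-1)/(v-1)<r$, so since $r$ is prime we have $\gcd(r,\lambda)=1$, and Zieschang's theorem applies to show that $G$ is point-primitive of almost simple or affine type. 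By hypothesis $G$ is almost simple, so $G_{\alpha}$ is a maximal subgroup of $G$.

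The key reduction is that $G_{\alpha}$ is a \emph{large} subgroup. Flag-transitivity forces $G_{\alpha}$ to act transitively on the $r$ blocks through $\alpha$, so $r\mid |G_{\alpha}|$ and hence $|G_{\alpha}|\geq r>\sqrt{v}=\sqrt{|G:G_{\alpha}|}$. Therefore $|G:G_{\alpha}|<|G_{\alpha}|^{2}$, that is $|G|<|G_{\alpha}|^{3}$, so $G_{\alpha}$ is a large maximal subgroup of $G$. This places us exactly in the setting controlled by the classification of large subgroups of finite almost simple groups, which provides an explicit finite list of candidate pairs $(X,G_{\alpha}\cap X)$ organised by Aschbacher class.

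With the candidates in hand, I would dispose of the socles already treated in the literature, namely the alternating, sporadic and exceptional groups, by citing the corresponding classifications, and then concentrate the remaining work on $X$ a classical group. For each large maximal subgroup $H=G_{\alpha}\cap X$ I would compute $v=|G:G_{\alpha}|$ explicitly, and then extract the design parameters from the constraints: $\lambda(v-1)=r(k-1)$ with $r$ prime, the bound $k\leq r$, the divisibility $r\mid |G_{\alpha}|$, and the integrality of $b=vr/k$ and of $\lambda$. A further strong restriction comes from the standard flag-transitive divisibility condition $r\mid \lambda d$ for every subdegree $d$ of $G$, which, since $\gcd(r,\lambda)=1$, forces $r$ to divide every nontrivial subdegree. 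Running these conditions against each entry of the list should leave only finitely many sporadic solutions, recorded in Table~\ref{tbl:main}, together with the two infinite families.

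The main obstacle is the analysis of the parabolic (class $\Cmc_{1}$) subgroups, where $v$ is a count of subspaces and one cannot a priori bound the rank or the field size; here the replication number $r$ is forced to be a primitive prime divisor of an expression such as $(q^{n-1}-1)/(q-1)$, and the delicate point is to show that requiring $r$ prime, together with the subdegree divisibility and the equation $\lambda(v-1)=r(k-1)$, eliminates all possibilities except the projective-space family and the family of part~(b). The Witt--Bose--Shrikhande spaces of part~(a) arise similarly from the dihedral large subgroups of $\PSL_{2}(2^{n})$, where one must match the condition that $2^{n}+1$ be a Fermat prime. Establishing that no further infinite families survive, and that the residual small cases close up into Table~\ref{tbl:main}, is where the bulk of the case-by-case computation lies.
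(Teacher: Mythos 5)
Your outline follows the paper's strategy almost exactly: reduce via $\lambda(v-1)=r(k-1)$, Fisher's inequality and $r\mid|G_\alpha|$ to the conclusion that $G_\alpha$ is a large maximal subgroup, invoke Zieschang for point-primitivity, quote the literature for alternating and sporadic socles, and run the classification of large subgroups together with the subdegree divisibility $r\mid d$ against the remaining classical and exceptional candidates. As a skeleton this is correct and is the paper's own decomposition.

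There is, however, one load-bearing idea that your plan omits and without which the case analysis does not close. You need to prove that $r$ is coprime to the defining characteristic $p$ and to $|\Out(X)|$, so that $r$ divides $|G_\alpha\cap X|_{p'}$ (the paper's Lemma~\ref{lem:coprime}, proved by playing $v\geq p(X)$, the minimal permutation degree, against $v<r^2$). This is what does two essential jobs: first, it upgrades $|G|\leq|G_\alpha|^3$ to $|X|<|H\cap X|\cdot|H\cap X|_{p'}^2$ and yields the explicit upper bounds on $r$ that eliminate almost every entry of the large-subgroup list via $\lambda v<r^2$; second, it is what converts the subdegree condition into a contradiction for parabolic stabilisers, since Lemma~\ref{lem:subdeg} only gives you a subdegree that is a \emph{power of $p$}, forcing $r=p$ --- a conclusion you cannot refute without the coprimality statement. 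Your plan of ``running the conditions against the list'' also underestimates one subcase: for $X=\PSp_4(q)$ with $H$ a $\Cmc_3$-subgroup of type $\Sp_2(q^2)$, the arithmetic constraints are all satisfied with $r=q^2+1$ a Fermat prime and $\lambda=2$, and the paper has to mount a genuine group-theoretic argument (Tits' lemma applied to the block stabiliser, semiregularity of a Sylow $r$-subgroup, and the maximal factorizations of $\PSp_4(q)$) to exclude it. Finally, the exceptional case is not simply ``in the literature'': the paper proves it (Proposition~\ref{prop:exceptional}) by the same large-subgroup method, so you would need to carry that case yourself rather than cite it away.
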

\begin{table}%[h]
\centering\scriptsize
\caption{Some nontrivial $2$-design with prime replication number}\label{tbl:main}
\resizebox{\textwidth}{!}{
\begin{tabular}{cllllllllll}
%{p{5mm}p{6mm}p{7mm}p{6mm}p{1mm}p{5mm}p{10mm}p{13mm}p{10mm}p{40mm}p{11mm}}
\hline
Line &
$v$ &
$b$ &
$r$ &
$k$ &
$\lambda$ &
$X$ &
$G_{\alpha}\cap X$&
$G$ &
Designs &
References \\ \hline
$1$ &
$6$ &
$10$ &
$5$ &
$3$ &
$2$ &
$\A_{5}$ &
$\D_{10}$ &
$\A_{5}$ &
- &
\cite{b:Handbook,a:Non-symmetric}\\
$2$ &
$7$ &
$7$ &
$3$ &
$3$ &
$1$ &
$\PSL_{2}(7)$ &
$\S_{4}$ &
$\PSL_{2}(7)$ &
$\PG(2, 2)$ &
\cite{a:ABD-PSL2,b:Handbook,a:reg-reduction} \\
$3$ &
$8$ &
$14$ &
$7$ &
$4$ &
$3$ &
$\PSL_{2}(7)$&
$7{:}3$ &
$\PSL_{2}(7)$ &
-&
\cite{b:Handbook,a:Non-symmetric}\\
$4$ &
$11$ &
$11$ &
$5$ &
$5$ &
$2$ &
$\PSL_{2}(11)$&
$\A_{5}$&
$\PSL_{2}(11)$ &
Paley &
\cite{a:ABD-PSL2,b:Handbook,a:reg-reduction} \\
$5$ &
$12$ &
$22$ &
$11$ &
$6$ &
$5$ &
$\M_{11}$ &
$\PSL_{2}(11)$ &
$\M_{11}$ &
- &
\cite{b:Handbook,a:Zhan-nonsym-sprodic}\\
$6$ &
$15$ &
$15$ &
$7$ &
$7$ &
$3$ &
$\A_{7}$ &
$\PSL_{2}(7)$ &
$\A_{7}$ &
$\PG_{2}(3,2)$&
\cite{a:ABD-PSL2,b:Handbook,a:Zhu-sym-alternating}\\
$7$ &
$15$ &
$35$ &
$7$ &
$3$ &
$1$ &
$\A_{7}$ &
$\PSL_{2}(7)$ &
$\A_{7}$ &
$\PG(3,2)$&
\cite{b:Handbook,a:Zhou-nonsym-alt}\\
$8$ &
$15$ &
$35$ &
$7$ &
$3$ &
$1$ &
$\A_{8}$ &
$2^3{:}\PSL_{3}(2)$ &
$\A_{8}$ &
$\PG(3,2)$&
\cite{b:Handbook,a:Zhou-nonsym-alt}\\
%
%$9$ &
%$\frac{q(q-1)}{2}$ &
%$q^{2}-1$ &
%$q+1$ &
%$\frac{q}{2}$ &
%$1$ &
%$\PSL_{2}(q)$ &
%$\D_{2(q+1)}$ &
%$\PSL_{2}(q)$ &
%Witt-Bose-Shrikhande space
%$\W(q)$, $q=2^{2^{m}}\geq 16$ &
%\cite{a:delan-linear-space, a:Non-symmetric} \\
%
\hline
\scriptsize
Note: & \multicolumn{10}{l}{\scriptsize The last column addresses to references in which a design with the parameters in}\\
 & \multicolumn{10}{l}{\scriptsize the line has been constructed.}
\end{tabular}
}
\end{table}

%\subsection{Outline of proof}\label{sec:outline}

In order to prove Theorem~\ref{thm:main} in Section~\ref{sec:proof}, we observe that the result for the case where the socle $X$ is an alternating group or a sporadic simple group follows immediately from \cite{a:Zhou-lam-large-sporadic,a:Zhan-nonsym-sprodic,a:Zhou-nonsym-alt,a:Zhu-sym-alternating}. For the remaining cases, the group $G$ is point-primitive \cite{a:ZIESCHANG}, or equivalently, the point stabiliser $H=G_\alpha$ is maximal in $G$. If $X$ is a finite classical simple group, then we apply Aschbacher's Theorem~\cite{a:Aschbacher} which says that $H$ lies in one of the eight geometric families $\Cmc_{i}$ ($i=1,\ldots8$) of subgroups of $G$, or in the family $\Smc$ of almost simple subgroups with some irreducibility conditions. The case where $X=\PSL_{2}(q)$ has been separately studied in Proposition \ref{prop:psl2}. For the remaining classical groups,
we use the minimal degrees of permutation representations of finite classical simple groups in characteristic $p$ to show that $r$ divides the $p'$-part of $|H\cap X|$, and so we conclude that $|X|<|H\cap X|^3$. The candidates for such maximal subgroups $H$ are recorded in \cite{a:AB-Large-15}. In this list, we only need to consider those subgroups $H$ which belongs to $\Cmc_{i}$ for $i=1,2,3,5,8$, see Proposition \ref{prop:class-s}. We then analyse each possible cases in Propositions \ref{prop:psl}-\ref{prop:orth}. Note that the case where $X=\PSp_4(q)$ needs more attention, see Proposition \ref{prop:psp}. If $X$ is a finite exceptional simple group in characteristic $p$, then we show that $r\neq p$, and since the subgroup $H$ is large in $G$, that is to say, $|G|\leq |H|^3$, using the list of such subgroups in \cite{a:ABD-Exp}, we obtain no possible parameters satisfying hypothesises of our main result. In this paper, we use the software \textsf{GAP} \cite{GAP4} for computational arguments.

\section{Preliminaries}\label{sec:pre}

In this section, we state some useful facts in both design theory and group theory. Lemma \ref{lem:New} below is an elementary result on subgroups of almost simple groups.

\begin{lemma}\label{lem:New}{\rm \cite[Lemma 2.2]{a:ABD-PSL3}}
Let $G$  be an almost simple group with socle $X$, and let $H$ be maximal in $G$ not containing $X$. Then $G=HX$ and $|H|$ divides $|\Out(X)|{\cdot}|H\cap X|$.
\end{lemma}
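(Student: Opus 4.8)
The plan is to prove the two assertions in turn, relying only on the definition of an almost simple group together with the second isomorphism theorem. First I would argue that $G=HX$. Since $X\unlhd G$, the product $HX$ is a subgroup of $G$ satisfying $H\leq HX\leq G$. Because $H$ does not contain $X$, we cannot have $HX=H$ (otherwise $X\leq H$), and so $H<HX$. The maximality of $H$ in $G$ then forces $HX=G$, which is the first claim.

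For the divisibility statement, I would apply the second isomorphism theorem to the normal subgroup $X$ and the subgroup $H$: since $G=HX$, we obtain $H/(H\cap X)\cong HX/X=G/X$. It remains to bound $|G/X|$. As $X$ is nonabelian simple it is centreless, so $X=\Inn(X)$ inside $\Aut(X)$; the inclusions $X\unlhd G\leq\Aut(X)$ then give $G/X=G/\Inn(X)\leq\Aut(X)/\Inn(X)=\Out(X)$. Hence $|H|/|H\cap X|=|G:X|$ divides $|\Out(X)|$, and multiplying through by $|H\cap X|$ yields that $|H|$ divides $|\Out(X)|{\cdot}|H\cap X|$, as required.

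There is no serious obstacle here: the result is a routine consequence of maximality and the isomorphism theorems. The only point meriting care is the identification $X=\Inn(X)$, which is exactly where the hypothesis that the socle is a \emph{nonabelian} simple group is used --- it guarantees $\Z(X)=1$, so the conjugation map $X\to\Inn(X)$ is an isomorphism and the quotient $G/X$ embeds into $\Out(X)$.
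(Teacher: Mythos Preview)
Your proof is correct and complete; both steps are the standard arguments and there are no gaps. Note that the paper does not actually supply a proof of this lemma but simply cites it from \cite[Lemma 2.2]{a:ABD-PSL3}, so there is no in-paper proof to compare against---your argument is precisely the routine verification one would expect behind that citation.
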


\begin{lemma}\label{lem:Tits}{\rm (Tits' Lemma~\cite[1.6]{a:tits})}
If $X$ is a simple group of Lie type in characteristic $p$, then any proper subgroup of index prime to $p$ is contained in a parabolic subgroup
of $X$.
\end{lemma}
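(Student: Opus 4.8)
The plan is to work with the split $(B,N)$-pair structure of $X$ and convert the coprimality hypothesis into a statement about Sylow $p$-subgroups. Write $B=U\rtimes T$ for a Borel subgroup, where $U=\mathrm{O}_p(B)$ is the unipotent radical and $T$ a maximal torus; since $[X:B]=\sum_{w\in W}q^{\ell(w)}$ (the Poincar\'e polynomial of the Weyl group $W$ evaluated at $q$, with constant term $1$) and $[B:U]=|T|$ are both coprime to $p$, the subgroup $U$ is a Sylow $p$-subgroup of $X$. Recall that the proper parabolic subgroups of $X$ are exactly the conjugates of the standard parabolics $P_J=BW_JB$ with $J\subsetneq S$, where $S$ is the set of simple reflections, and that these are precisely the subgroups with nontrivial unipotent radical containing a conjugate of $B$.

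First I would reduce to the core group-theoretic assertion. If $H<X$ has index prime to $p$, then $|H|_p=|X|_p=|U|$, so $H$ contains a Sylow $p$-subgroup of $X$; by Sylow's theorem some conjugate of $U$ lies in $H$, and since a conjugate of a parabolic is again a parabolic there is no loss in assuming $U\le H$. Conversely $U\le H$ forces $[X:H]$ to be prime to $p$, so the hypothesis is exactly that $H$ contains a Sylow $p$-subgroup. Thus it suffices to prove: a proper subgroup $H$ of $X$ with $U\le H$ lies in a proper parabolic subgroup.

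For the main step my preferred engine is the Borel--Tits theorem: for any nontrivial unipotent subgroup $Q\le X$ there is a (necessarily proper) parabolic $P$ with $N_X(Q)\le P$ and $Q\le\mathrm{O}_p(P)$. Applying this to $Q=\mathrm{O}_p(H)\unlhd H$ gives $H\le N_X(\mathrm{O}_p(H))\le P$, a proper parabolic, and we are done---\emph{provided} $\mathrm{O}_p(H)\neq1$. So the whole weight of the lemma rests on showing $\mathrm{O}_p(H)\neq1$ for a proper $H\ge U$, and I expect this to be the main obstacle. I would attack it through the Bruhat decomposition $X=\bigsqcup_{w\in W}BwB$: set $W_H=\{\,w\in W:H\cap BwB\neq\varnothing\,\}$, so that $H\subseteq BW_HB$ by definition. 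Using that $U\le H$ contains every positive root subgroup, together with the BN-pair multiplication rule $BsB\cdot BwB\subseteq BwB\cup BswB$ for $s\in S$, one shows that $W_H$ is closed under the relevant reductions and hence equals a standard parabolic subgroup $W_J$; consequently $H\subseteq P_J$. Properness of $H$ then forces $J\neq S$ (otherwise $P_J=X$ and $H=X$), so $P_J$ is a proper parabolic containing $H$; incidentally $1\neq\mathrm{O}_p(P_J)\le U\le H$ is normalised by $H$, recovering $\mathrm{O}_p(H)\neq1$.

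The delicate point, and the step I would spend the most care on, is verifying that $W_H$ is genuinely a standard parabolic subgroup of $W$: the naive closure argument---left-multiplying an element of $H\cap BwB$ by a putative element of $H\cap BsB$---is circular, since membership of the simple reflections in $W_H$ is exactly what must be established. I would resolve this by an induction on the Lie rank of $X$, passing to the minimal parabolics over $B$ and to the subgroups they induce from $H$, or, failing a clean inductive setup, by a minimal-counterexample analysis feeding directly into Borel--Tits. Either way the two facts that make everything go through are that $U$ is a \emph{full} Sylow $p$-subgroup (so it contains all positive root subgroups) and that $X$ is simple, hence has trivial unipotent radical, which guarantees that the parabolic produced is proper.
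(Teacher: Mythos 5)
The paper offers no proof of this statement: it is quoted directly from Seitz \cite[1.6]{a:tits}, so the only thing to compare your argument against is the literature it cites. Your reduction is the standard one and is correct as far as it goes: $U=\mathrm{O}_p(B)$ is a Sylow $p$-subgroup of $X$, a subgroup of index prime to $p$ contains a conjugate of $U$, and one may assume $U\leq H$; and, granting the Borel--Tits theorem, producing a proper parabolic containing $H$ is indeed equivalent to showing $\mathrm{O}_p(H)\neq 1$.

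That last step, however, \emph{is} the lemma, and your proposal does not establish it. You correctly diagnose that the closure argument for $W_H=\{w\in W: H\cap BwB\neq\varnothing\}$ breaks down: since $B\not\leq H$, the nonemptiness of $H\cap BwB$ does not force $BwB$ to lie in $H B$ or give you control of products of whole double cosets, and that control is exactly the mechanism behind the classification of overgroups of $B$. Having flagged the obstruction, you defer its resolution to ``an induction on the Lie rank'' or ``a minimal-counterexample analysis'' without executing either. What is missing is precisely the nontrivial content of Tits'/Seitz's argument: a rank-one statement (a proper subgroup of $\PSL_2(q)$, $\PSU_3(q)$, $\Sz(q)$ or ${}^2\G_2(q)$ containing a full Sylow $p$-subgroup lies in the Borel subgroup), together with a way of gluing that information over the minimal parabolics $P_s$, $s\in S$, to conclude $W_H=W_J$ for some $J\subsetneq S$. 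As written, the proposal is an accurate description of the architecture of a proof, with the load-bearing step left as an acknowledged placeholder; it should not be accepted as a proof.
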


%\begin{lemma}[]\label{lem:proper}{\rm~\cite[Lemma 7]{a:reg-classical}}
%If $X$ is a simple group of Lie type in characteristic $2$ except for  $\PSL_{2}(4)\cong \A_{5}$ and $\PSp_{4}(2)'\cong \A_{6}$, then
%any proper subgroup $H$ such that $|X{:}H|_{2}\leq 2$ is contained in a parabolic subgroup of $X$.
%\end{lemma}

If a group $G$ acts on a set $\Pmc$ and $\alpha\in \Pmc$, the \emph{subdegrees} of $G$ are the size of orbits of the action of the point-stabiliser $G_\alpha$ on $\Pmc$.

\begin{lemma}\label{lem:subdeg}{\rm \cite[3.9]{a:LSS1987}}
If $X$ is a group of Lie type in characteristic $p$, acting on the set of cosets of a maximal parabolic subgroup, and $X$ is neither $\PSL_n(q)$, $\POm_{n}^{+}(q)$
(with $n/2$ odd), nor $E_{6}(q)$, then there is a unique subdegree which is a power of $p$.
\end{lemma}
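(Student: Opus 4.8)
The plan is to recast the statement as a combinatorial assertion about the Weyl group via the Bruhat decomposition, and then to reduce it to counting those parabolic double cosets that consist of a single Schubert cell. Write $X$ as a group of Lie type over $\GF(q)$ with $q=p^{a}$, Weyl group $W$, Borel subgroup $B$, and let $P=P_J$ be the maximal parabolic obtained by deleting a single node, with $W_J$ the corresponding standard parabolic subgroup of $W$. The subdegrees are the lengths of the $P$-orbits on $X/P$, and by the parabolic Bruhat decomposition these orbits are indexed by the double cosets $W_J\backslash W/W_J$. Using the cell decomposition $X/P=\bigsqcup_{w\in W^{J}}BwP/P$ with $|BwP/P|=q^{\ell(w)}$, where $W^{J}$ denotes the minimal-length coset representatives of $W/W_J$, the $P$-orbit through $wP$ is the union of the cells indexed by $W^{J}\cap(W_J w W_J)$, so its length equals
\[
\sum_{w'\in W^{J}\cap (W_J w W_J)} q^{\ell(w')}.
\]

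First I would note that such a sum is a (nontrivial) power of $p$ precisely when it is a single power of $q$, i.e.\ when $|W^{J}\cap (W_J w W_J)|=1$: a sum of at least two terms with distinct exponents factors as $q^{m}$ times an integer exceeding $1$ and congruent to $1$ modulo $p$, hence is never a power of $p$ (the only other possibility, several cells sharing a dimension, is controlled and disposed of by a short direct check). Now the number of right $W_J$-cosets inside $W_J w W_J$ equals $[W_J:W_J\cap wW_Jw^{-1}]$, and this index is $1$ exactly when $w\in N_W(W_J)$. Therefore the subdegrees that are powers of $p$ are precisely those attached to double cosets whose minimal representative normalises $W_J$, and the number of such double cosets is $[N_W(W_J):W_J]$. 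Since the identity double coset always contributes the trivial subdegree $q^{0}=1$, the existence of a \emph{unique} nontrivial subdegree which is a power of $p$ is equivalent to the purely Weyl-group identity $[N_W(W_J):W_J]=2$.

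The final step is to verify this index for every maximal parabolic of every type, which I would carry out using Howlett's description of $N_W(W_J)$ as $W_J$ extended by the diagram symmetries of $J$ that are realised inside $W$. For a deleted node the relevant symmetry is the order-two ``flip'' of the ambient diagram, and whether it lies in $W$ is governed by the longest element $w_0$: when $w_0=-1$ (types $B_n$, $C_n$, $D_{n}$ with $n$ even, and the remaining exceptional groups) the flip is realised inside $W$, the index is $2$, and the unique $p$-power subdegree appears. The main obstacle, and the source of the exceptions, is exactly the families in which $w_0\neq-1$, namely type $A_{n-1}$, type $D_{n/2}$ with $n/2$ odd, and $E_{6}$, since there $w_0$ acts as $-1$ composed with the nontrivial diagram automorphism; the expected normalising involution is then carried outside $W$ or sends the chosen parabolic to a different one, so $[N_W(W_J):W_J]\neq 2$ — it collapses to $1$ in type $A_{n-1}$ (no nontrivial $p$-power subdegree survives), while for $D_{n/2}$ with $n/2$ odd and for $E_{6}$ the symmetric end/minuscule nodes change the count away from one. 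These groups are precisely $\PSL_{n}(q)$, $\POm_{n}^{+}(q)$ with $n/2$ odd, and $E_{6}(q)$, the stated exceptions, which completes the argument.
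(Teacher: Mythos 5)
The paper does not prove this lemma at all: it is quoted verbatim from \cite[3.9]{a:LSS1987}, so the only meaningful comparison is with the standard argument behind that citation, and your Bruhat--Weyl set-up is indeed that standard route. The reduction itself is sound: the $P$-orbit attached to a double coset $W_JwW_J$ with $w$ its minimal-length representative has size $q^{\ell(w)}\sum_{u}q^{\ell(u)}$, where $u$ runs over the distinguished coset representatives of $W_J/(W_J\cap wW_Jw^{-1})$; since $W_J\cap wW_Jw^{-1}$ is a standard parabolic subgroup of $W_J$ (Kilmoyer), this sum equals $1$ exactly when $w$ normalises $W_J$ and otherwise is an integer greater than $1$ and congruent to $1$ modulo $p$ --- which, incidentally, is the argument that disposes of your worry about several cells sharing a dimension, rather than the unexplained ``short direct check''. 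So the nontrivial $p$-power subdegrees are indeed counted by $[N_W(W_J):W_J]-1$.

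The genuine gap is in the final step, which is where all the content of the lemma lives. Exhibiting one element of $N_W(W_J)\setminus W_J$ (namely $w_0$ when $w_0=-1$) only gives $[N_W(W_J):W_J]\geq 2$, i.e.\ the \emph{existence} of a $p$-power subdegree; the lemma asserts \emph{uniqueness}, which needs the exact equality, i.e.\ the upper bound $N_W(W_J)\leq W_J\langle w_0\rangle$. You say you ``would carry this out using Howlett's description'' but never do, and the heuristic you offer in its place --- that the count is governed by whether $w_0=-1$ --- is not a proof and is in places false: in type $A_{2m-1}$ with $J$ the middle node, $N_{\S_{2m}}(\S_m\times\S_m)=(\S_m\times\S_m){:}2$, so the index is $2$ and $\GL_{2m}(q)$ on $m$-spaces has a unique $p$-power subdegree $q^{m^2}$ even though $w_0\neq -1$; your claim that the index ``collapses to $1$ in type $A_{n-1}$'' is wrong there (this is exactly the phenomenon behind Remark~\ref{rem:subdeg}). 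A complete proof must compute $N_W(W_J)/W_J$ for every maximal $J$ in every irreducible type, and must also treat the twisted groups (${}^2A_n$, ${}^2D_n$, ${}^3D_4$, ${}^2E_6$, ${}^2B_2$, ${}^2G_2$, ${}^2F_4$), for which your cell-size formula $|BwP/P|=q^{\ell(w)}$ does not hold as written. Until that case analysis is actually carried out, the uniqueness half of the statement remains unproved.
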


\begin{remark}\label{rem:subdeg}
We remark that even in the cases excluded in Lemma~\ref{lem:subdeg}, many of the maximal parabolic subgroups still have the property as asserted, see proof of  \cite[Lemma 2.6]{a:Saxl2002}. In particular, for an almost simple group $G$ with socle $X=E_{6}(q)$, if $G$  contains a graph automorphism or $H =P_{i}$ with $i$ one of $2$ and $4$, the conclusion of Lemma~\ref{lem:subdeg} is still true.
\end{remark}

\begin{lemma}\label{lem:six} {\rm \cite[Lemmas 5 and 6]{a:Zhou-nonsym-alt}}
Let $\Dmc$ be a $2$-design with prime replication number $r$, and let $G$ be a flag-transitive automorphism group of $\Dmc$. If $\alpha$ is a point in $\Pmc$ and $H:=G_{\alpha}$, then
\begin{enumerate}[\rm \quad (a)]
\item $r(k-1)=\lambda(v-1)$.  In particular, if $r$ is prime, then $r$ divides $v-1$ and $\gcd(r,v)=1$;
\item $vr=bk$;
%\item $bk(k-1)=\lambda v(v-1)$;
\item $r\mid |H|$ and $\lambda v<r^2$;
%\item $r\mid \lambda \gcd(v-1,|H|)$;
\item $r\mid d$, for all nontrivial subdegrees $d$ of $G$.
\end{enumerate}
\end{lemma}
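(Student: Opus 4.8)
The plan is to establish the two counting identities (a) and (b) by elementary double counting, then to extract the arithmetic consequences of $r$ being prime, and finally to prove (c) and (d) from the single structural observation that flag-transitivity forces $H=G_\alpha$ to permute transitively the set $\Bmc_\alpha$ of $r$ blocks incident with $\alpha$. For (b) I would count the flags of $\Dmc$ in two ways: summing over points gives $vr$ while summing over blocks gives $bk$, whence $vr=bk$. For the identity in (a), I would fix $\alpha$ and count the pairs $(\beta,B)$ with $\beta\neq\alpha$ and $\alpha,\beta\in B$: running over the $r$ blocks through $\alpha$ yields $r(k-1)$, while running over the $v-1$ remaining points yields $\lambda(v-1)$, so that $r(k-1)=\lambda(v-1)$. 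To deduce the divisibility claims when $r$ is prime, I would invoke Fisher's inequality $b\geq v$, which with (b) gives $r\geq k$; since $\Dmc$ is nontrivial we have $k<v$, so $\lambda=r(k-1)/(v-1)<r$, and as $r$ is prime this yields $\gcd(r,\lambda)=1$. Then $r\mid\lambda(v-1)$ forces $r\mid v-1$, and a prime dividing $v-1$ cannot divide $v$, so $\gcd(r,v)=1$.

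For (c), the divisibility $r\mid|H|$ follows from flag-transitivity: given flags $(\alpha,B_1)$ and $(\alpha,B_2)$, some $g\in G$ carries the first to the second, and any such $g$ fixes $\alpha$ and hence lies in $H$; thus $H$ is transitive on the $r$ blocks of $\Bmc_\alpha$, and orbit--stabiliser gives $r\mid|H|$. For the inequality I would rewrite (a) as $\lambda v=r(k-1)+\lambda$; using $\lambda<r$ together with $k\leq r$ then gives $\lambda v<r(k-1)+r=rk\leq r^2$.

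For (d), let $\Delta$ be a nontrivial $H$-orbit on points, of length $d$, and count the incident pairs $(\beta,B)$ with $\beta\in\Delta$ and $\alpha,\beta\in B$. Summing over $\beta\in\Delta$ gives $\lambda d$, since each $\beta\neq\alpha$ lies with $\alpha$ in exactly $\lambda$ blocks. On the other hand, because $H$ acts transitively on $\Bmc_\alpha$ and stabilises $\Delta$ setwise, the quantity $|B\cap\Delta|$ is independent of the block $B\in\Bmc_\alpha$; calling this common value $x$, the same count equals $rx$. Hence $\lambda d=rx$, and since $\gcd(r,\lambda)=1$ by (a), we conclude $r\mid d$.

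The double counting in (a), (b) and the orbit--stabiliser step in (c) are entirely routine; the point requiring care, and the crux shared by both (c) and (d), is the observation that flag-transitivity makes $H$ transitive on $\Bmc_\alpha$, from which the constancy of $|B\cap\Delta|$ over blocks through $\alpha$ follows. The only other place where something must be checked is the coprimality $\gcd(r,\lambda)=1$, which is exactly where the primality of $r$ and Fisher's inequality enter.
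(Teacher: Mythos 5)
Your proof is correct and follows essentially the same route as the source the paper cites for this lemma (standard flag/pair double counting for (a) and (b), Fisher's inequality to get $k\leq r$ and $\lambda<r$ for (c), and the orbit-counting argument with $|B\cap\Delta|$ constant on $\Bmc_\alpha$ for (d)). No gaps: the coprimality $\gcd(r,\lambda)=1$ is correctly extracted from the primality of $r$, which is exactly what licenses the divisibility conclusions in (a) and (d).
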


For a given positive integer $n$ and a prime divisor $p$ of $n$, we denote the $p$-part of $n$ by $n_{p}$, that is to say, $n_{p}=p^{t}$ with $p^{t}\mid n$ but $p^{t+1}\nmid n$.

\begin{corollary}\label{cor:large}
Let $\Dmc$ be a flag-transitive $2$-design with automorphism group $G$. Then $|G|\leq |G_{\alpha}|^3$,  where $\alpha$ is a point in $\Dmc$.
\end{corollary}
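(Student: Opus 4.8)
The plan is to reduce the group-order inequality to a purely combinatorial bound on $v$ in terms of $|G_{\alpha}|$, and then feed in the two facts recorded in Lemma~\ref{lem:six}(c). First I would note that a flag-transitive group is in particular point-transitive: since $G$ acts transitively on the flags $(\gamma,B)$ and every point lies in some block, projecting a flag onto its first coordinate shows that $G$ is transitive on $\Pmc$. Hence the orbit--stabiliser theorem gives $|G|=v\cdot|G_{\alpha}|$, and the target inequality $|G|\leq|G_{\alpha}|^{3}$ is equivalent to the single estimate $v\leq|G_{\alpha}|^{2}$.

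Next I would invoke the two ingredients of Lemma~\ref{lem:six}(c) with $H=G_{\alpha}$. On one hand $r\mid|H|$, which gives $r\leq|G_{\alpha}|$ and therefore $r^{2}\leq|G_{\alpha}|^{2}$. On the other hand $\lambda v<r^{2}$. Since $\lambda\geq1$ for any design, these combine to $v\leq\lambda v<r^{2}\leq|G_{\alpha}|^{2}$, which is exactly the bound $v\leq|G_{\alpha}|^{2}$ isolated above. Substituting back yields $|G|=v\,|G_{\alpha}|<|G_{\alpha}|^{3}$, and a fortiori $|G|\leq|G_{\alpha}|^{3}$.

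The only point deserving care is the status of the estimate $\lambda v<r^{2}$ outside the prime-replication-number setting of Lemma~\ref{lem:six}, since the corollary is phrased for an arbitrary flag-transitive $2$-design. I would therefore observe that this estimate is in fact general: the identity $r(k-1)=\lambda(v-1)$ together with $v>k$ forces $r>\lambda$, while Fisher's inequality $b\geq v$ (equivalently $r\geq k$, valid for every nontrivial $2$-design) gives $r-k+1\geq1$; hence $r^{2}-\lambda v=r^{2}-r(k-1)-\lambda=r(r-k+1)-\lambda\geq r-\lambda>0$. Likewise $r\mid|G_{\alpha}|$ holds for any flag-transitive design, because $G_{\alpha}$ is transitive on the $r$ blocks through $\alpha$. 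Thus the argument requires no primality hypothesis, and the only (mild) obstacle is simply to confirm that these two inputs hold in the stated generality rather than merely in the prime case covered by Lemma~\ref{lem:six}.
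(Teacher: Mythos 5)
Your proof is correct and follows essentially the same route as the paper's: write $|G|=v\,|G_{\alpha}|$ by point-transitivity, bound $v<r^{2}$ via $\lambda v<r^{2}$, and use $r\leq|G_{\alpha}|$ from $r\mid|G_{\alpha}|$. Your closing observation is a worthwhile addition the paper glosses over: Lemma~\ref{lem:six} is stated only for prime $r$, whereas the corollary is phrased for an arbitrary flag-transitive $2$-design, and your direct verification that $r\mid|G_{\alpha}|$ and $r^{2}-\lambda v=r(r-k+1)-\lambda>0$ (via Fisher's inequality, valid for nontrivial designs) correctly closes that gap.
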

\begin{proof}
By Lemma \ref{lem:six}(c), we have that $v<r^2$. The result follows from th  $v{=}|G{:}G_{\alpha}|$ and $r\leq |G_{\alpha}|$.
\end{proof}

%\begin{corollary}\label{cor:p-part}
%Suppose that $\Dmc$ is a a $2$-design with prime replication number $r$. Let $G$ be a primitive, flag-transitive almost simple automorphism group of $\Dmc$ with simple socle $X$ of Lie type in characteristic $p$. If the point-stabiliser $G_{\alpha}$ is not a parabolic subgroup of $G$, Then $|G|<|G_{\alpha}|{\cdot}|G_{\alpha}|_{p'}^2$.
%\end{corollary}
%\begin{proof}
%It follows from Corollary~\ref{cor:large} that $|G|<|G_{\alpha}|^3$. By~\cite[1.6]{a:tits}, $p$ divides $v= [G:G_{\alpha}]$ and since $r$ divides $\lambda(v-1)$ with $r$ prime, the parameter $r$ divides $|G_{\alpha}|_{p'}$, and since $v<r^2$, we have $|G|< |G_{\alpha}|{\cdot}|G_{\alpha}|_{p'}^2$.
%\end{proof}

\begin{proposition}\label{prop:flag}
Let $\Dmc$ be a $2$-design with prime replication number $r$ admitting a flag-transitive automorphism group $G$. Then $G$ is point-primitive of almost simple or affine type.
\end{proposition}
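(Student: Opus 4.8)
The plan is to reduce Proposition~\ref{prop:flag} to Zieschang's theorem, which we may invoke since it is discussed in the introduction and is standard. The key observation is that for a $2$-design with \emph{prime} replication number $r$, the coprimality hypothesis $\gcd(r,\lambda)=1$ of Zieschang's result holds automatically. Indeed, by Lemma~\ref{lem:six}(a) we have the fundamental identity $r(k-1)=\lambda(v-1)$, and more usefully we must show $r\nmid\lambda$. First I would argue that since $\Dmc$ is nontrivial we have $\lambda<r$: from the standard counting relation $\lambda(v-1)=r(k-1)$ together with $k<v-1$ (equivalently $k-1<v-1$), we obtain $\lambda(v-1)=r(k-1)<r(v-1)$, whence $\lambda<r$. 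Because $r$ is prime and $0<\lambda<r$, the integer $\lambda$ cannot be divisible by $r$, so $\gcd(r,\lambda)=1$.

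Having established $\gcd(r,\lambda)=1$, I would then apply Zieschang's theorem \cite{a:ZIESCHANG} verbatim: a flag-transitive automorphism group $G$ of a $2$-design with $\gcd(r,\lambda)=1$ is necessarily point-primitive and of almost simple or affine type. This immediately yields the conclusion of the proposition. The logical structure is therefore short: translate the ``prime $r$'' hypothesis into the coprimality condition, then quote the known dichotomy.

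I do not anticipate a serious obstacle here, since the proposition is essentially a specialization of an existing theorem. The only point requiring care is the strict inequality $\lambda<r$, which depends on the nontriviality assumption $2<k<v-1$ built into our definition of a design; without it one could have $k=v-1$ and hence $\lambda=r$, so the hypothesis genuinely matters. I would state this inequality cleanly, noting that it also reproves the divisibility part of Lemma~\ref{lem:six}(a), namely that a prime $r$ divides $v-1$ and is coprime to $v$. With $\gcd(r,\lambda)=1$ in hand, the remainder is a direct citation, and no case analysis on the socle $X$ is needed at this stage.
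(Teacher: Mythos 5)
Your argument is correct and essentially identical to the paper's: primality of $r$ yields $\gcd(r,\lambda)=1$ via $\lambda<r$, after which point-primitivity and the almost simple/affine dichotomy follow from the cited literature (the paper merely splits the citation, invoking \cite[2.3.7(a)]{b:dembowski} for primitivity and \cite{a:ZIESCHANG} for the type reduction, and leaves the inequality $\lambda<r$ implicit). One small slip in your closing remark: $\lambda=r$ would force $k-1=v-1$, i.e.\ $k=v$, not $k=v-1$ (when $k=v-1$ the identity $\lambda(v-1)=r(k-1)$ gives $\lambda=r(v-2)/(v-1)<r$), so only the convention $k<v$ is needed for the strict inequality, not the full nontriviality hypothesis $2<k<v-1$.
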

\begin{proof}
Since the parameter $r$ is prime, it follows that $r$ is coprime to $\lambda$, and so by \cite[2.3.7(a)]{b:dembowski}, we conclude that $G$ is point-primitive. Moreover,
\cite[Theorem]{a:ZIESCHANG} implies that $G$ is of almost simple or affine type.
\end{proof}

\begin{lemma}\label{lem:divisible}
Suppose that $\Dmc$ is a $2$-design with replication number $r$. Let $G$ be a  flag-transitive automorphism group of $\Dmc$ with simple socle $X$ of Lie type in characteristic $p$. If the point-stabiliser $H=G_{\alpha}$ contains a normal quasi-simple subgroup $K$ of Lie type in characteristic $p$ and $p$ does not divide $|Z(K)|$, then either $p$ divides $r$, or $K_{B}$ is contained in a parabolic subgroup $P$ of $K$ and $r$ is divisible by $|K{:}P|$.
\end{lemma}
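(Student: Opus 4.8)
The plan is to turn the set of blocks through $\alpha$ into a space on which $K$ acts, and then to run Tits' Lemma on the simple quotient $\overline{K}=K/\Z(K)$. First I would set up the action. Since $G$ is flag-transitive, the point-stabiliser $H=G_{\alpha}$ acts transitively on the set $\Omega$ of the $r$ blocks incident with $\alpha$; thus, fixing the flag $(\alpha,B)$, we have $r=|\Omega|=|H:H_{B}|$, where $H_{B}=H\cap G_{B}$.

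Next I would study how the normal subgroup $K$ sits inside this action. As $K\unlhd H$ and $H$ is transitive on $\Omega$, the group $H$ permutes the $K$-orbits on $\Omega$ transitively, so all these orbits share a common length $d=|K:K_{B}|$ with $K_{B}=K\cap G_{B}$, and $d$ divides $r=|\Omega|$. If $p$ divides $d$, then $p$ divides $r$ and the first alternative of the conclusion holds. Hence I may assume $p\nmid d$, that is, $K_{B}$ is a subgroup of $K$ whose index is prime to $p$.

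The decisive step is descending to the simple group $\overline{K}=K/\Z(K)$, which is of Lie type in characteristic $p$. Let $\overline{K_{B}}$ denote the image of $K_{B}$. Writing $d=|K:K_{B}\Z(K)|\cdot|K_{B}\Z(K):K_{B}|$ and observing that $|\overline{K}:\overline{K_{B}}|=|K:K_{B}\Z(K)|$ divides $d$, the hypothesis $p\nmid|\Z(K)|$ ensures that passing to $\overline{K}$ does not disturb the prime-to-$p$ index condition, so $\overline{K_{B}}$ has index prime to $p$ in $\overline{K}$. If $\overline{K_{B}}=\overline{K}$, then $K=K_{B}\Z(K)$, and since $K$ is perfect and $\Z(K)$ is central this forces $K_{B}=K$, a degenerate situation (in which $K$ fixes every block through $\alpha$) that I would dispose of by hand. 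Otherwise $\overline{K_{B}}$ is a proper subgroup of prime-to-$p$ index, so Tits' Lemma (Lemma~\ref{lem:Tits}) places it inside a parabolic subgroup $\overline{P}$ of $\overline{K}$. Taking $P$ to be the full preimage of $\overline{P}$ in $K$ yields a parabolic subgroup of $K$ containing $K_{B}$ with $|K:P|=|\overline{K}:\overline{P}|$ dividing $|\overline{K}:\overline{K_{B}}|$, hence dividing $d$ and so dividing $r$; this is the second alternative.

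The main obstacle is that Tits' Lemma is a statement about simple groups, whereas $K$ is only quasi-simple, so the whole argument must be transported faithfully across the central quotient $K\to\overline{K}$. This is precisely the role of the hypothesis $p\nmid|\Z(K)|$: it guarantees that a Sylow $p$-subgroup of $K$ maps isomorphically onto one of $\overline{K}$, so that prime-to-$p$ index is preserved and the preimage of a parabolic of $\overline{K}$ is a parabolic of $K$ of the same index, which is exactly what makes the divisibility $|K:P|\mid r$ survive the reduction. The only remaining point needing care is the degenerate case $K_{B}=K$, which must be excluded or handled separately.
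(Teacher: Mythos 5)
Your proposal is correct and follows essentially the same route as the paper: flag-transitivity gives $r=|H{:}H_{B}|$, normality of $K$ in $H$ gives $|K{:}K_{B}|\mid r$, and Tits' Lemma handles the case $p\nmid r$. The paper's proof is terser — it applies Tits' Lemma to the quasi-simple $K$ directly without spelling out the passage to $K/\Z(K)$ — so your careful treatment of the central quotient (and of the degenerate case $K_{B}=K$) merely fills in details the paper leaves implicit.
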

\begin{proof}
If $B$ is a block incident with a point $\alpha$ of $\Dmc$, then $r= |H{:}H_{B}|$, and so $|K{:}K_{B}|$ divides $r$. If $\gcd(r,p)=1$, then $|K{:}K_{B}|$ is coprime to $p$, and now Lemma~\ref{lem:Tits} implies that $K_{B}$ is contained in a (maximal) parabolic subgroup $P$ of $K$. Hence $r$ is divisible by $|K{:}P|$.
\end{proof}

\section{Proof of the main result}\label{sec:proof}

In this section, we prove Theorem \ref{thm:main}. suppose that $\Dmc$ is a nontrivial $2$-design with prime replication number $r$ and $G$ is an almost simple automorphism group of $\Dmc$ with socle $X$ being a finite non-abelian simple group. Suppose now that $G$ is flag-transitive. Then Proposition~\ref{prop:flag} implies that $G$ is point-primitive. Let $H:=G_{\alpha}$, where $\alpha$ is a point of $\Dmc$. Therefore, $H$ is maximal in $G$ (see \cite[Corollary 1.5A]{b:Dixon}), and so Lemma~\ref{lem:New} implies that
\begin{align}
v=\frac{|X|}{|H\cap X|}.\label{eq:v}
\end{align}
In the proceeding  sections, we consider each possibility for the non-abelian simple group $X$.

\subsection{Alternating and sporadic groups}\label{sec:alt}

Here, we consider the case where $X$ is an alternating group $\A_{n}$ ($n\geq 5$) or a sporadic simple group, and note that the result follows immediately from the main results in \cite{a:Zhou-lam-large-sporadic,a:Zhan-nonsym-sprodic,a:Zhou-nonsym-alt,a:Zhu-sym-alternating}:

\begin{proposition}\label{prop:alt-spor}{\rm \cite{a:Zhou-lam-large-sporadic,a:Zhan-nonsym-sprodic,a:Zhou-nonsym-alt,a:Zhu-sym-alternating}}
Let $\Dmc$ be a nontrivial $2$-design with prime replication number $r$. Suppose that $G$ is an automorphism group of $\Dmc$ of almost simple type with socle $X$ a sporadic simple group or an alternating group $\A_{n}$ with $n\geq 5$. If $G$ is flag-transitive, then $(v,b,r,k,\lambda)$, $G$, $X$ and $G_{\alpha}\cap X$ are as in one of the lines $2$ and $5$-$8$ of {\rm Table~\ref{tbl:main}}.
\end{proposition}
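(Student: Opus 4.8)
The plan is to reduce Proposition~\ref{prop:alt-spor} to an appeal to the cited literature, so the real work is to verify that the hypotheses of those papers match ours and that no cases are lost in translation. The statement asserts that for $X$ an alternating or sporadic simple group, a flag-transitive almost simple automorphism group of a nontrivial $2$-design with prime replication number $r$ forces the parameters into lines $2$ and $5$--$8$ of Table~\ref{tbl:main}. First I would observe that the condition ``$r$ prime'' immediately gives $\gcd(r,\lambda)=1$, since any common divisor of $r$ and $\lambda$ would have to be $1$ or $r$, and $r\mid\lambda$ is impossible as $r=bk/v>\lambda$ for a nontrivial design (indeed $r(k-1)=\lambda(v-1)$ with $k<v$ forces $r>\lambda$). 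Thus our designs form a subclass of the coprime-case designs, or more directly of the ``large point-stabiliser'' designs, treated in \cite{a:Zhou-lam-large-sporadic,a:Zhan-nonsym-sprodic,a:Zhou-nonsym-alt,a:Zhu-sym-alternating}.

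Next I would separate the two socle types and cite the corresponding classifications. For $X$ sporadic, the flag-transitive $2$-designs with the relevant arithmetic constraints are enumerated in \cite{a:Zhou-lam-large-sporadic,a:Zhan-nonsym-sprodic}; for $X=\A_n$ they are enumerated in \cite{a:Zhou-nonsym-alt,a:Zhu-sym-alternating}, with the symmetric and non-symmetric cases handled in separate references. The key step is then to scan the full parameter lists produced in those papers and retain only those entries for which the replication number $r=bk/v$ is actually prime. Running this filter should collapse the cited classifications to exactly the five lines claimed: line~$2$ with $X=\PSL_2(7)\cong\A_7$-adjacent small configuration (here realised via the $\S_4$ stabiliser and $r=3$), line~$5$ with $X=\M_{11}$ and $r=11$, and lines~$6$--$8$ with $X\in\{\A_7,\A_8\}$ and $r=7$. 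I would present this as a short verification, pointing to the specific tables in each reference and noting in each case that the tabulated $(v,b,r,k,\lambda)$ has $r$ prime if and only if it appears among our retained lines.

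The main obstacle I anticipate is bookkeeping rather than mathematics: the cited papers use slightly different normalisations (some classify symmetric designs, some non-symmetric; some work with $\Aut(\Dmc)$-flag-transitivity, some with an arbitrary flag-transitive $G$), so I must confirm that every design satisfying our hypotheses does appear in at least one of their lists, and that none is inadvertently excluded by a restrictive hypothesis there (for instance a symmetry or nontriviality assumption). To be safe I would explicitly check that the boundary cases $k=3$ and symmetric designs are covered, since lines~$2$, $7$, $8$ have $\lambda=1$ (linear spaces / projective geometries) and line~$6$ is symmetric; these are precisely the configurations most likely to fall between two references. Once the union of the cited classifications is seen to contain all our designs, intersecting with the primality condition on $r$ yields exactly lines $2$ and $5$--$8$, completing the proof.
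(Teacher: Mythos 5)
Your proposal is correct and follows essentially the same route as the paper, which likewise disposes of the alternating and sporadic socles by citing the classifications in \cite{a:Zhou-lam-large-sporadic,a:Zhan-nonsym-sprodic,a:Zhou-nonsym-alt,a:Zhu-sym-alternating} and retaining only the entries with prime replication number; your explicit observation that $r$ prime forces $\gcd(r,\lambda)=1$ (via $r(k-1)=\lambda(v-1)$ and $k-1<v-1$, so $r>\lambda$) is exactly the reduction the paper leaves implicit. One small slip: in line~2 the socle is $\PSL_2(7)\cong\PSL_3(2)$, which is not isomorphic to $\A_{7}$ (nor to any alternating group), so your parenthetical identification there is off, though it does not affect the substance of the argument.
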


\subsection{Classical groups.}\label{sec:classical}

In this section, we suppose that $G$ is an almost simple group with socle $X=X(q)$ being a finite classical simple group of Lie type, where $q=p^{a}$ for some positive integer $a$. We here use the following notation to denote the finite classical simple groups.
\begin{align*}
&\PSL_{n}(q), \text{ for } n\geq 2 \text{ and } (n,q)\neq (2,2), (2,3),\\
&\PSU_{n}(q), \text{ for } n\geq 3 \text{ and } (n,q)\neq (3,2),\\
&\PSp_{2n}(q), \text{ for } n\geq 2 \text{ and } (n,q)\neq (4,2),\\
&\POm_{2n+1}(q), \text{ for } n\geq 3 \text{ and } q \text{ odd },\\
&\POm_{2n}^{\pm}(q),  \text{ for } n\geq 4.
\end{align*}
In this manner, the only repetitions are:
\begin{align}\label{eq:iso}
\nonumber  &\PSL_{2}(4)\cong \PSL_{2}(5)\cong \A_{5}, \quad
\PSL_{2}(7)\cong \PSL_{3}(2), \quad
\PSL_{2}(9)\cong \A_{6},\\
&\PSL_{4}(2)\cong \A_{8},\\
\nonumber  &\PSp_{4}(3)\cong \PSU_{4}(2).
\end{align}

For a finite group $X$, let $p(X)$ be the minimal degree of permutation representation of $X$. In particular, for a finite simple group $X$, the integer $p(X)$ is just the index of the largest proper subgroup of $X$, and we know these degrees for all finite simple groups. Here, we need $p(X)$ for finite classical simple groups $X$:

\begin{lemma}\label{lem:min-deg}{\rm \cite{a:Cooperstein-1}}
The minimal degrees $p(X)$ of permutation representations of finite classical simple groups of Lie type are given in {\rm Table~\ref{tbl:min-deg}}.
\end{lemma}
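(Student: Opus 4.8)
The plan is to reduce the computation of $p(X)$ to a question about maximal subgroups and then to invoke their classification. Since $X$ is simple, every nontrivial permutation representation is faithful and is equivalent to the action on the cosets of a proper subgroup, whose degree is the corresponding index; hence $p(X)$ is the smallest value of $|X{:}M|$ as $M$ ranges over the maximal subgroups of $X$, that is, the index of a maximal subgroup of largest order. So the task is, for each family of classical groups, to single out the maximal subgroup of maximal order and record its index.

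First I would appeal to Aschbacher's Theorem \cite{a:Aschbacher}, which places every maximal subgroup $M$ of $X$ in one of the geometric families $\Cmc_1,\dots,\Cmc_8$ or in the family $\Smc$ of almost simple irreducible subgroups, and then estimate $|X{:}M|$ family by family. The expectation, which matches the entries of Table~\ref{tbl:min-deg}, is that the minimum is attained in $\Cmc_1$ by a stabiliser of a suitable $1$-space of the natural module: a maximal parabolic $P_1$ stabilising a totally singular $1$-space for $\PSL_n(q)$, $\PSp_{2n}(q)$ and the orthogonal types, and the stabiliser of a nonisotropic point for $\PSU_n(q)$. The index of such a subgroup is the number of points of the associated projective or polar geometry, which yields exactly the claimed values of $p(X)$.

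The technical heart is to verify that no other family produces a smaller index. For the remaining geometric classes one bounds $|M|$ from above: higher-dimensional subspace stabilisers, the imprimitive subgroups in $\Cmc_2$, the field-extension subgroups in $\Cmc_3$, and the tensor, subfield and classical subgroups in $\Cmc_4$--$\Cmc_8$ all have order well below $|P_1|$ once the rank and field are not too small, so their indices exceed the claimed minimum. The class $\Smc$ needs an order bound of the shape $|M|<q^{c\,\dim V}$ for an absolute constant $c$, which is far smaller than $|P_1|$ in the generic range. I would assemble these comparisons from the standard order formulae for classical groups and for the members of each Aschbacher class.

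The main obstacle is the low-rank and small-field behaviour, where the generic comparison fails and coincidences intervene: the exceptional isomorphisms in \eqref{eq:iso}, the groups $\PSL_2(q)$ (whose minimal degree is controlled by a different subgroup), and small cases such as $\PSU_4(2)\cong\PSp_4(3)$, $\PSp_6(2)$, $\POm_8^+(2)$ and the unitary groups of small dimension, in which a subgroup lying outside $\Cmc_1$ can become the largest. These finitely many exceptions I would settle by direct inspection of the known lists of maximal subgroups, or by computation in \textsf{GAP} \cite{GAP4}, thereby completing the table and reconciling the statement with Cooperstein's determination in \cite{a:Cooperstein-1}.
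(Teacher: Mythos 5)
First, a point of comparison: the paper offers no proof of this lemma at all --- it is quoted directly from Cooperstein \cite{a:Cooperstein-1} --- so your argument can only be measured against the strategy of that reference and its later corrections. Your opening reduction is sound (for a simple group a faithful permutation representation of minimal degree is transitive, so $p(X)$ is the smallest index of a maximal subgroup), and organising the comparison by Aschbacher class is the standard modern way to carry this out. The problem is that your identification of the extremal subgroup is wrong in several places, and the error is not confined to finitely many small groups.

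For $\PSU_n(q)$ the table entry $(q^{n}-(-1)^{n})(q^{n-1}-(-1)^{n-1})/(q^2-1)$ counts the \emph{isotropic} points of the unitary geometry, i.e.\ it is the index of the parabolic $P_1$; the stabiliser of a nonisotropic point, which you propose as the winner, has index $q^{n-1}(q^{n}-(-1)^{n})/(q+1)$, strictly larger whenever $q>2$, and it matches the table only in the exceptional row $\PSU_n(2)$ with $n\equiv 0 \mod{6}$. (For $\PSU_4(q)$ the minimum $(q+1)(q^3+1)$ is attained at $P_2$, seen via $\PSU_4(q)\cong\POm_{6}^{-}(q)$, not at any point stabiliser.) Symmetrically, your claim that $P_1$ wins for the symplectic and orthogonal groups fails along whole infinite families: for $\PSp_{2n}(2)$ the minimum $2^{n-1}(2^{n}-1)$ is the index of the $\Cmc_8$-subgroup of type $\O_{2n}^{+}(2)$, which beats the $2^{2n}-1$ singular points, and for $\POm_{2n}^{+}(2)$ and $\POm_{2n+1}(3)$ the minimum is the index of the stabiliser of a \emph{nonsingular} point, again smaller than the number of singular points. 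These are not ``finitely many exceptions'' that direct inspection or \textsf{GAP} can absorb; each is an infinite family requiring its own comparison, which is precisely why the table carries the side conditions it does. As written, executing your plan would reproduce the wrong entries in exactly those rows.
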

\begin{table}%[h]
\centering
\scriptsize
\caption{The minimal degrees of permutation representations of finite  classical  simple groups of Lie type.}\label{tbl:min-deg}
\resizebox{\textwidth}{!}{
\begin{tabular}{lll}
\hline
$X$ & $p(X)$ & Conditions\\\hline
$\PSL_{n}(q)$ & $(q^n-1)/(q-1)$ & $(n, q)\neq (2, 5), (2, 7), (2, 9), (2, 11), (4, 2)$\\
$\PSL_{2}(q)$ & $ q $ & $q=5, 7, 11$\\
$\PSL_{2}(9)$ & $ 6 $ & \\
$\PSL_{4}(2)$ & $8$ & \\
$\PSU_{n}(q)$ & $(q^{n}-(-1)^{n})(q^{n-1}-(-1)^{n-1})/(q^2-1)$ & $n\geq 5$ and $(n, q)\neq (6s, 2)$\\
$\PSU_{n}(2)$ & $2^{n-1}(2^n-1)/3$ & $n\equiv 0 \mod{6}$ \\
$\PSU_{4}(q)$ & $(q+1)(q^3+1)$ & \\
$\PSU_{3}(q)$ & $q^3+1$ & $q\neq 5$\\
$\PSU_{3}(5)$ & $50$ & \\
$\PSp_{2n}(q)$ & $(q^{2n}-1)/(q-1)$ & $n\geq 2$, $q> 2$ and $(n, q)\neq (2, 3)$\\
$\PSp_{2n}(2)$ & $2^{n-1}(2^{n}-1)$ & $n\geq 3$\\
$\PSp_{4}(3)$ & $27$ & \\
$\POm_{2n+1}(q)$ & $(q^{2n}-1)/(q-1)$ & $n\geq 3$, $q$ odd and $q\geq 5$\\
$\POm_{2n+1}(3)$ & $3^{n}(3^n-1)/2$ & $n\geq 3$\\
$\POm_{2n}^{+}(q)$ & $(q^{n}-1)(q^{n-1}+1)/(q-1)$ & $n\geq 4$ and $q\geq 3$\\
$\POm_{2n}^{+}(2)$ & $2^{n-1}(2^{n}-1)$ & $n\geq 4$\\
$\POm_{2n}^{-}(q)$ & $(q^{n}+1)(q^{n-1}-1)/(q-1)$ & $n\geq 4$\\
\hline
%\multicolumn{10}{p{8.7cm}}{\tiny $\ast$ The last column addresses to references in which a design with the parameters in the line has been constructed.}
\end{tabular}
}
\end{table}

\begin{lemma}\label{lem:coprime}
Let $\Dmc$ be a $2$-design with prime replication number $r$, and let $G$ be a flag-transitive automorphism group of $\Dmc$ of almost simple type with socle $X$ being a finite  classical simple  group in characteristic $p$. Let also $\alpha$ be a point of $\Dmc$ and $H=G_{\alpha}$. If $X\neq \PSL_{2}(q)$, then $r$ divides $|H\cap X|_{p'}$, and hence  $|X|<|H\cap X|{\cdot}|H\cap X|_{p'}^2$.
\end{lemma}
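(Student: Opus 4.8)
The plan is to reduce everything to the single statement that $r$ is a prime different from $p$ dividing $|H\cap X|$; the claimed inequality is then purely formal. Indeed, by \eqref{eq:v} we have $|X|=v\,|H\cap X|$, and Lemma~\ref{lem:six}(c) gives $\lambda v<r^2$, whence $v<r^2$. If $r\mid|H\cap X|_{p'}$ then $r\leq|H\cap X|_{p'}$, so $|X|=v\,|H\cap X|<r^2\,|H\cap X|\leq|H\cap X|\cdot|H\cap X|_{p'}^2$, which is the asserted bound. Moreover, since $r$ is prime, the condition $r\mid|H\cap X|_{p'}$ is equivalent to the conjunction $r\neq p$ and $r\mid|H\cap X|$, so it is these two facts that I would establish.

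The key quantitative input is the inequality $v\geq p(X)$. The socle $X$ is a nontrivial normal subgroup of the point-primitive group $G$, hence transitive on the $v$ points, and being simple it acts faithfully; thus $v=|X{:}(H\cap X)|$ is the degree of a faithful transitive permutation representation of $X$, and so is at least the minimal degree $p(X)$ of Table~\ref{tbl:min-deg}. Combining $v\geq p(X)$ with $v<r^2$ yields the crucial lower bound $r>\sqrt{p(X)}$. Now I would dispose of the two bad alternatives at once. By Lemma~\ref{lem:six}(c) the prime $r$ divides $|H|$, and by Lemma~\ref{lem:New} $|H|$ divides $|\Out(X)|\cdot|H\cap X|$; hence either $r\mid|H\cap X|$, or $r=p$, or $r$ divides $|\Out(X)|$ with $r\neq p$. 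It therefore suffices to show that both $p$ and every prime divisor of $|\Out(X)|$ are strictly smaller than $\sqrt{p(X)}$: then $r>\sqrt{p(X)}$ rules out the last two alternatives and forces $r\mid|H\cap X|$ with $r\neq p$, i.e.\ $r\mid|H\cap X|_{p'}$.

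For the prime $p$ this is immediate from $p(X)>q^2\geq p^2$, a fact one reads off Table~\ref{tbl:min-deg} for every classical $X\neq\PSL_2(q)$; it is exactly here that the excluded family $\PSL_2(q)$, with $p(X)=q+1\leq q^2$, must be set aside. For a prime divisor $\ell$ of $|\Out(X)|=d\cdot f\cdot g$ (diagonal, field and graph contributions), I would use that $d$ divides $q-1$ or $q+1$ in the linear and unitary cases and is at most $4$ in the symplectic and orthogonal cases, that $f=a=\log_p q$ so $\ell\leq a<2^a\leq q$, and that $g\leq 6$ so the only new prime it can contribute is $\ell\in\{2,3\}$; in each family these bounds place $\ell$ below $\sqrt{p(X)}$.

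The main obstacle I expect is precisely this last, family-by-family comparison of $\ell$ with $\sqrt{p(X)}$. While the estimates are very generous for groups of large rank or over large fields, the tightest comparisons occur for low-rank groups over small fields and for the diagonal and graph contributions—most notably the triality factor $g=6$ in $\POm_8^{+}(q)$ (where nonetheless $p(X)\geq 135$ comfortably exceeds $9$) and the diagonal factor in the smallest linear and unitary groups. For the handful of remaining pairs $(X,q)$ one checks directly that no prime divisor of $|\Out(X)|$ reaches $\sqrt{p(X)}$. Once this finite verification is in place, the conclusion $r\mid|H\cap X|_{p'}$ follows uniformly, and with it the stated inequality $|X|<|H\cap X|\cdot|H\cap X|_{p'}^2$.
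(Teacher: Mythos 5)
Your proposal is correct and follows essentially the same route as the paper: both arguments combine $r\mid|\Out(X)|{\cdot}|H\cap X|$ (from Lemmas~\ref{lem:six}(c) and~\ref{lem:New}) with $r^2>v\geq p(X)$ and a family-by-family comparison of $p$ and the prime divisors of $|\Out(X)|$ against $\sqrt{p(X)}$, using Table~\ref{tbl:min-deg} and the outer automorphism data of \cite[Table 5.1.A]{b:KL-90}. Your reformulation as the uniform criterion ``every bad prime is below $\sqrt{p(X)}$'' is just a cleaner packaging of the paper's case-by-case contradictions (only a trivial numerical slip: for $\POm_8^{+}(2)$ one has $p(X)=120$, not $135$, which still comfortably exceeds $9$).
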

\begin{proof}
By Lemma~\ref{lem:six}(c), the parameter $r$ divides $|H|$, and so Lemma~\ref{lem:New} implies that $r$ divides $|\Out(X)|{\cdot}|H\cap X|$. We show that $r$ is coprime to $|\Out(X)|{\cdot}|H\cap X|_{p}$, and hence we conclude that $r$ divides $|H\cap X|_{p'}$, as claimed.

In what follows, we assume that $X\neq \PSL_{2}(q)$. Further, assume the contrary that $r$ divides $|\Out(X)|{\cdot}|H\cap  X|_{p}$. Then $r$ divides $|\Out(X)|{\cdot}|X|_{p}$. We now consider each possibility for $X$ separately.\smallskip

Suppose that $X=\PSL_{n}(q)$ with $n\geq 3$ and $q=p^{a}$. If $(n, q)\neq (4, 2)$, then by Lemma~\ref{lem:min-deg}, we have that $v\geq p(X)=(q^{n}-1)/(q-1)$, and hence Lemma~\ref{lem:six}(c) implies that
\begin{align}\label{eq:psl-1}
r^{2}> \frac{q^{n}-1}{q-1}.
\end{align}
Since $r$ is a prime number dividing $|\Out(X)|{\cdot}|X|_{p}$, by inspection $|\Out(X)|$ and $|X|_{p}$ from \cite[Table 5.1.A]{b:KL-90}, we conclude that $r$ divides $2ap\cdot \gcd(n,q-1)$. Then $r\in \{2,p\}$ or $r$ divides $a$ or $q-1$, and so $r\leq \max\{a,p,q-1\}\leq q$. Since $n\geq 3$, we conclude  by \eqref{eq:psl-1} that $q^{2}+q+1\leq (q^{n}-1)/(q-1)<r^{2}\leq q^{2}$, and so $q^{2}+q+1<q^{2}$, which is a contradiction. If $X=\PSL_{4}(2)$, then $r$ divides $|\Out(X)|{\cdot} |X|_{2}=2^{7}$, and so $r=2$. By Lemmas~\ref{lem:min-deg} and~\ref{lem:six}(c), we have that $8\leq v<r^{2}=4$, which is impossible.\smallskip

Suppose that $X=\PSU_{n}(q)$ with $n\geq 3$ and $q=p^{a}$. Assume first that $n\geq 5$. If $(n, q)\neq (6m, 2)$, then by Lemma~\ref{lem:min-deg}, we have that $v\geq p(X)=(q^{n}-(-1)^{n})(q^{n-1}-(-1)^{n-1})/(q^2-1)$, and hence Lemma~\ref{lem:six}(c) implies that
\begin{align}\label{eq:psu-1}
r^{2}> \frac{(q^{n}-(-1)^{n})(q^{n-1}-(-1)^{n-1})}{q^2-1}.
\end{align}
Since $r$ is a prime number dividing $|\Out(X)|{\cdot}|X|_{p}$, by inspection $|\Out(X)|$ and $|X|_{p}$ from \cite[Table 5.1.A]{b:KL-90}, the parameter $r$ divides $2ap\cdot \gcd(n, q+1)$. Then $r\in \{2, p\}$ or $r$ divides $a$ or $q+1$, and so $r\leq \max\{a, p, q+1\}\leq q+1$. Note that $n\geq 5$. Then it follows from   \eqref{eq:psu-1} that $(q^{5}+1)(q^2+1)\leq (q^{n}-(-1)^{n})(q^{n-1}-(-1)^{n-1})/(q^2-1)<r^{2}\leq (q+1)^{2}$, that is to say, $(q^{5}+1)(q^2+1)<(q+1)^{2}$, which is a contradiction. If $(n, q)=(6m, 2)$, then $r$ is a prime divisor of $|\Out(X)|{\cdot}|X|_{2}$ dividing $2\cdot 3\cdot 2^{n(n-1)/2}$, and so $r=2$ or $3$, by Lemmas~\ref{lem:six}(c) and~\ref{lem:min-deg}, we must have  $2^{n-1}(2^n-1)<3r^{2}\leq 3^{3}$, for some $n\geq 6$,  which is impossible. Assume now that $n= 4$. Since $r$ divides $|\Out(X)|{\cdot}|X|_{p}$, it follows from  \cite[Table 5.1.A]{b:KL-90} that $r$ divides $8ap$. Then $r\in \{2, p\}$ or $r$ divides $a$, and so $r\leq \max\{a, p\}\leq q$. By Lemmas~\ref{lem:six}(c) and~\ref{lem:min-deg}, we have that $(q+1)(q^3+1)\leq \lambda v< r^{2}\leq q^2$, and so $(q+1)(q^3+1)<q^2$, which is impossible. Finally assume that $n= 3$. If $q\neq 5$, then by Lemma~\ref{lem:min-deg}, we have that $v\geq p(X)=q^3+1$. We use the information in \cite[Table 5.1.A]{b:KL-90} and conclude that $r$ divides $6ap$. Then $r\in \{2, 3, p\}$ or $r$ divides $a$, and so $r\leq \max\{a, 3, p\}\leq q$. By Lemma~\ref{lem:six}(c) we conclude that $q^3+1\leq \lambda v< r^{2}\leq q^2$, and so $q^3+1<q^2$, which is impossible. If $q=5$, then from \cite[Table 5.1.A]{b:KL-90}, we conclude that $r$ divides $30$. Then $r\in \{2, 3, 5\}$, and so $r\leq 5$. By Lemmas~\ref{lem:six}(c) and~\ref{lem:min-deg}, we have that $50\leq \lambda v< r^{2}\leq 25$, which is a contradiction.\smallskip

Suppose that $X=\PSp_{2n}(q)$ with $n\geq 2$ and $q=p^{a}$. Assume first that $q> 2$ and $(n , q)\neq(2, 3)$. Then by Lemma~\ref{lem:min-deg}, we have that $v\geq p(X)=(q^{2n}-1)/(q-1)$, and hence Lemma~\ref{lem:six}(c) implies that
\begin{align}\label{eq:psp-1}
r^{2}> \frac{q^{2n}-1}{q-1}.
\end{align}
Since $r$ is a prime number dividing $|\Out(X)|{\cdot}|X|_{p}$, by inspection $|\Out(X)|$ and $|X|_{p}$ from \cite[Table 5.1.A]{b:KL-90}, the parameter $r$ must divide $2ap$. Then $r\in \{2,p\}$ or $r$ divides $a$, and so $r\leq \max\{a, p\}\leq q$. Since $n\geq 2$, it follows from \eqref{eq:psp-1} that $(q^{2}+1)(q+1)\leq (q^{2n}-1)/(q-1)<r^{2}\leq q^2$, and so $(q^{2}+1)(q+1)<q^{2}$, which is a contradiction. Assume now that $n\geq 3$ and $q=2$. In this case, $r$ divides $|\Out(X)|{\cdot}|X|_{2}$, and so by \cite[Table 5.1.A]{b:KL-90}, we must have $r=2$, which is a contradiction. Assume finally that $(n, q)=(2, 3)$. Then $r$ divides $|\Out(X)|{\cdot}|X|_{3}=6$, and so $r=2$ or $3$. By Lemmas~\ref{lem:min-deg} and~\ref{lem:six}(c), we have that $27\leq v<r^{2}\leq 9$, which is impossible.\smallskip

Suppose that $X=\POm_{2n+1}(q)$ with $n\geq 3$ and $q=p^{a}$ odd. Assume first that $q\neq 3$. Since $r$ is a prime number dividing $|\Out(X)|{\cdot}|X|_{p}$, by inspection $|\Out(X)|$ and $|X|_{p}$ from \cite[Table 5.1.A]{b:KL-90}, we conclude that $r$ divides $2ap$. Then $r\in \{2,p\}$ or $r$ divides $a$, and so $r\leq \max\{a, p\}\leq q$. Since $n\geq 3$, we conclude by Lemmas~\ref{lem:min-deg} and~\ref{lem:six}(c) that $q^{5}+q^{4}+q^{3}+q^{2}+q+1\leq (q^{2n}-1)/(q-1)<r^{2}\leq q^2$, and so $q^{5}+q^{4}+q^{3}+q^{2}+q+1<q^{2}$, which is a contradiction.\smallskip
Assume finally that $q=3$ and $n\geq 3$.  Then $r$ divides $|\Out(X)|{\cdot}|X|_{3}$, and so $r=2$ or $3$. By Lemmas~\ref{lem:min-deg} and~\ref{lem:six}(c), we have that $3^{n}(3^{n-1}-1)<2r^{2}\leq 18$, for some positive integer $n\geq 3$, which is impossible.\smallskip

Suppose that $X=\POm_{2n}^{\e}(q)$ with $n\geq 4$, $q=p^{a}$ and $\e \in \{+, -\}$. Assume first that $(\e, q)\neq (+, 2)$. Since $r$ is a prime divisor of $|\Out(X)|{\cdot}|X|_{p}$, it follows from \cite[Table 5.1.A]{b:KL-90} that $r$ divides $6ap$, and so $r\in \{2, 3, p\}$ or $r$ divides $a$. Then $r\leq \max\{a, p\}\leq q$. Since $n\geq 4$, we conclude by Lemmas~\ref{lem:min-deg} and~\ref{lem:six}(c) that $(q^{4}-\e1)(q^{3}+\e1)\leq (q^{n}-\e1)(q^{n-1}+\e1)\leq \lambda (q-1)v< (q-1)r^{2}\leq q^2(q-1)$, and so $(q^{4}-\e1)(q^{3}+\e1)< q^2(q-1)$, which is a contradiction. Assume finally that $(\e, q)= (+, 2)$.  Then it follows from \cite[Table 5.1.A]{b:KL-90}  that $r$ divides $6$, and so $r=2$ or $3$. By Lemmas~\ref{lem:min-deg} and~\ref{lem:six}(c), we have that $2^{n-1}(2^{n}-1)<r^{2}\leq 9$, for some positive integer $n\geq 4$, which is impossible.

Therefore, the parameter $r$ divides $|H\cap  X|_{p'}$. Note by Proposition~\ref{prop:flag} that the group $G$ is primitive, and by the assumption it is almost simple with socle $X$. Then $H=G_{\alpha}$ is maximal in $G$, and so by Lemma~\ref{lem:New}, we have that $v=|X{:}H\cap X|$. Therefore, Lemma~\ref{lem:six}(c) implies that $|X:H\cap X|=v< r^{2}\leq |H{\cap }X|_{p'}^{2}$, and hence $|X|<|H\cap  X|{\cdot}|H\cap X|_{p'}^2$.
\end{proof}

Suppose now that $\Dmc$ is a nontrivial $2$-design with prime replication number $r$. Moreover, suppose that $G$ is a flag-transitive automorphism group of $\Dmc$ with  socle $X$. Recall by Proposition~\ref{prop:flag} that the group $G$ acts primitively on the point set of $\Dmc$, and so the point-stabiliser $H$ is maximal in $G$. We now apply Aschbacher's Theorem~\cite{a:Aschbacher} which says that $H$ lies in one of the eight geometric families $\Cmc_{i}$ of subgroups of $G$, or in the family $\Smc$ of almost simple subgroups with some irreducibility conditions. We follow the description of these subgroups as in \cite{b:KL-90} and analyse each of these cases separately. In what follows, if $H$ belongs to the family $\Cmc_{i}$, for some $i$, then we sometimes say that $H$ is a $\Cmc_{i}$-subgroup. We also denote by $\,^{\hat{}}H$ the pre-image of the group $H$ in the corresponding linear group.  A rough description of the $\Cmc_i$ families is given in Table \ref{t:max}.

\begin{table}%[h]
%\centering
\scriptsize
\caption{The geometric subgroup collections}\label{t:max}
\begin{tabular}{clllll}
\hline
Class & Rough description\\
\hline
$\Cmc_1$ & Stabilisers of subspaces of $V$\\
$\Cmc_2$ & Stabilisers of decompositions $V=\bigoplus_{i=1}^{t}V_i$, where $\dim V_i  = a$\\
$\Cmc_3$ & Stabilisers of prime index extension fields of $\F$\\
$\Cmc_4$ & Stabilisers of decompositions $V=V_1 \otimes V_2$\\
$\Cmc_5$ & Stabilisers of prime index subfields of $\F$\\
$\Cmc_6$ & Normalisers of symplectic-type $r$-groups in absolutely irreducible representations\\
$\Cmc_7$ & Stabilisers of decompositions $V=\bigotimes_{i=1}^{t}V_i$, where $\dim V_i  = a$\\
$\Cmc_8$ & Stabilisers of non-degenerate forms on $V$\\ \hline
\end{tabular}
\end{table}

\begin{table}%[h]
%\centering
\scriptsize
\caption{Some large maximal subgroups of finite classical simple groups.}\label{tbl:class-s}
\resizebox{\textwidth}{!}{
\begin{tabular}{clllll}
\hline
Class &
$X$ &
$H\cap X$ &
$v$ &
$u_r$ &
Conditions \\
\hline
$\Cmc_{6}$ &
$\PSL_3(4)$ &
$3^2.\Q_8$ &
$2^{3}{\cdot} 5{\cdot} 7$ &
$3$
\\
$\Smc$ &
$\PSL_{3}(4)$ &
$\A_{6}$ &
$2^{3}{\cdot} 7$ &
$5$  &
\\
$\Smc$ &
$\PSL_{4}(2)$ &
$\A_{7}$ &
$2^{3}$ &
$7$  &
\\
$\Cmc_{6}$ &
$\PSL_4(5)$ &
$2^4.\A_{6}$ &
$5^{5}{\cdot} 13{\cdot} 31$ &
$5$
\\
$\Smc$ &
$\PSL_{4}(7)$ &
$\PSU_{4}(2)$ &
$2^{3}{\cdot} 5{\cdot} 7^{6}{\cdot} 19$ &
$5$  &
\\
$\Smc$ &
$\PSL_{5}(3)$ &
$\M_{11}$ &
$2^{5}{\cdot} 3^{8}{\cdot} 11{\cdot} 13$ &
$11$  &
\\
$\Smc$ &
$\PSU_{3}(3)$ &
$\PSL_{2}(7)$ &
$2^{2}{\cdot} 3^{2}$ &
$7$ &
\\
$\Smc$ &
$\PSU_{3}(5)$&
$\PSL_{2}(7)$ &
$2{\cdot} 3{\cdot} 5^{3}$ &
$7$ &
\\
$\Smc$ &
$\PSU_{3}(5)$&
$\A_{7}$ &
$2{\cdot} 5^{2}$ &
$7$ &
\\
$\Smc$ &
$\PSU_{3}(5)$&
$\M_{10}$ &
$5^{2}{\cdot} 7$ &
$5$ &
\\
$\Cmc_{6}$ &
$\PSU_{4}(3)$ &
$2^{4}.\A_{6}$ &
$3^{4}{\cdot} 7$ &
$5$
\\
$\Smc$ &
$\PSU_{4}(3)$ &
$\PSL_{3}(4)$ &
$2{\cdot} 3^{4}$ &
$7$ &
\\
$\Smc$ &
$\PSU_{4}(3)$&
$\A_{7}$ &
$2^{4}{\cdot} 3^{4}$ &
$7$ &
\\
$\Smc$ &
$\PSU_{4}(5)$&
$\A_{7}$ &
$2^{4}{\cdot} 3^{2}{\cdot} 5^{5}{\cdot} 13$ &
$7$ &
\\
$\Smc$ &
$\PSU_{4}(5)$ &
$\PSU_{4}(2)$  &
$2{\cdot} 5^{5}{\cdot} 7{\cdot} 13$ &
$5$ &
\\
$\Cmc_{6}$ &
$\PSU_{4}(7)$ &
$2^{4}.\Sp_{4}(2)$ &
$2^{2}{\cdot} 5{\cdot} 7^{6}{\cdot} 43$ &
$5$
\\
$\Smc$ &
$\PSU_{5}(2)$ &
$\PSL_{2}(11)$  &
$2^{8}{\cdot} 3^{4}$ &
$11$ &
\\
$\Smc$ &
$\PSU_{6}(2)$ &
$\M_{22}$&
$2^{8}{\cdot} 3^{4}$ &
$11$ &
\\
$\Smc$ &
$\PSU_{6}(2)$&
$\PSU_{4}(3).2$  &
$2^{9}{\cdot} 3^{3}{\cdot} 5 {\cdot} 11$ &
$7$ &
\\
$\Smc$ &
$\PSp_{4}(q)$ &
$\Sz(q)$ &
$q^2(q^2-1)(q+1)$&
$q^2+1$&
$q=2^{a}\geq 4$\\
$\Smc$ &
$\PSp_{4}(7)$ &
$\A_{7}$ &
$2^{5} {\cdot} 5{\cdot} 7^{3}$ &
$7$ &
\\
$\Smc$ &
$\PSp_{4}(5)$ &
$\A_{6}$ &
$2^{3}{\cdot} 5^{3}{\cdot} 13$ &
$5$ &
\\
$\Smc$ &
$\PSp_{4}(2)$ &
$\A_{5}$ &
$2^{2}{\cdot} 3$ &
$5$ &
\\
$\Cmc_{6}$ &
$\PSp_{4}(3)$ &
$2^4.\Omega_{4}^{-}(2)$ &
$3^{3}$ &
$5$
\\
$\Cmc_{6}$ &
$\PSp_{4}(5)$&
$2^{4}.\Omega_{4}^{-}(2)$ &
$3{\cdot} 5^{3}{\cdot} 13$ &
$5$
\\
$\Cmc_{6}$ &
$\PSp_{4}(7)$ &
$ 2^4.\O_{4}^{-}(2)$ &
$2{\cdot} 3{\cdot} 5{\cdot} 7^{4}$ &
$5$
\\
$\Smc$ &
$\PSp_{6}(q)$ &
$\G_{2}(q)$ &
$q^3(q^4-1)$&
$q^2+q+1$ &
$q$ even\\
$\Smc$ &
$\PSp_{6}(2)$&
$\PSU_{3}(3).2$ &
$2^{3}{\cdot} 3{\cdot} 5$ &
$7$&
\\
$\Smc$ &
$\PSp_{6}(5)$&
$\mathrm{J}_{2}$  &
$2^{2}{\cdot} 3{\cdot} 5^{7}{\cdot} 13{\cdot} 31$ &
$7$&
\\
$\Smc$ &
$\PSp_{8}(2)$ &
$\S_{10}$ &
$2^{8}{\cdot} 3{\cdot} 17$ &
$7$ &
\\
$\Cmc_{6}$ &
$\PSp_{8}(3)$&
$2^{6}.\Omega_{6}^{-}(2)$ &
$2^{2}{\cdot} 3^{12}{\cdot} 5{\cdot} 7{\cdot} 13{\cdot} 41$ &
$5$
\\
$\Smc$ &
$\PSp_{12}(2)$ &
$\S_{14}$ &
$2^{25}{\cdot} 3^{3}{\cdot} 5{\cdot} 17 {\cdot} 31$ &
$13$ &
\\
$\Smc$ &
$\PSp_{16}(2)$ &
$\S_{18}$ &
$2^{48}{\cdot} 3^{2}{\cdot} 5{\cdot} 17{\cdot} 31{\cdot} 43{\cdot} 127{\cdot} 257$ &
$17$ &
\\
$\Smc$ &
$\PSp_{20}(2)$ &
$\S_{22}$ &
$2^{81}{\cdot} 3^{5}{\cdot} 5^{2}{\cdot} 17{\cdot} 31^{2}{\cdot} 41{\cdot} 43{\cdot} 73{\cdot} 127{\cdot} 257$ &
$19$ &
\\
$\Smc$ &
$\POm_{7}(q)$ &
$\G_{2}(q)$  &
$q^3(q^4-1)/2$&
$q^2+q+1$&
\\
$\Smc$ &
$\POm_{7}(3)$ &
$\Sp_{6}(2)$&
$3^{5}{\cdot} 13$ &
$7$ &
\\
$\Smc$ &
$\POm_{7}(3)$&
$\S_{9}$  &
$2^{2}{\cdot} 3^{5}{\cdot} 13$ &
$7$ &
\\
$\Smc$ &
$\POm_{7}(5)$ &
$\Sp_{6}(2)$ &
$5^{8}{\cdot} 13{\cdot} 31$ &
$7$&
\\
$\Smc$ &
$\POm_{7}(7)$ &
$\Sp_{6}(2)$ &
$2^{3}{\cdot} 5{\cdot} 7^{8}{\cdot} 19{\cdot} 43$ &
$7$&
\\
$\Smc$ &
$\POm_{9}(3)$ &
$\A_{10}$ &
$2^{7}{\cdot} 3^{12}{\cdot} 13{\cdot} 41$&
$7$&
\\
$\Smc$ &
$\POm_{8}^{+}(q)$ &
$\Omega_{7}(q)$ &
$q^3(q^4-1)/2$ &
$q^2+q+1$&
$q$ odd\\
$\Cmc_{4}$ &
$\POm_{8}^{+}(q)$ &
$\PSp_{4}(q){\times}\PSp_{2}(q)$ &
$q^7(q^6-1)(q^2+1)$ &
$q^2+1$&
\\
$\Smc$ &
$\POm_{8}^{+}(q)$&
$\PSp_{6}(q)$ &
$q^3(q^4-1)$&
$q^2+q+1$&
$q$ even\\
$\Smc$ &
$\POm_{8}^{+}(q)$&
${}^3\D_{4}(q_{0})$ &
$q_{0}^{24}(q_{0}^{18}-1)(q_{0}^{12}-1)(q_{0}^{6}-1)(q_{0}^{2}+1)$&
$q_{0}^{8}+q_{0}^4+1$&
$q=q_{0}^3$ odd\\
$\Smc$ &
$\POm_{8}^{+}(q)$&
$\POm_{8}^{-}(q_{0})$ &
$q^{6}(q^{6}-1)(q^{3}+1)(q+1)$&
$q^{2}+1$&
$q=q_{0}^2$\\
$\Smc$ &
$\POm_{8}^{+}(2)$  &
$\A_{9}$ &
$2^{6}{\cdot} 3{\cdot} 5$ &
$7$ &
\\
$\Smc$ &
$\POm_{8}^{+}(3)$&
$\Omega_{8}^{+}(2)$&
$3^{7}{\cdot} 13$&
$7$&
\\
$\Smc$ &
$\POm_{8}^{+}(5)$&
$\Omega_{8}^{+}(2)$&
$5^{10}{\cdot} 13^{2}{\cdot} 31$&
$7$&
\\
$\Smc$ &
$\POm_{8}^{+}(7)$&
$\Omega_{8}^{+}(2)$&
$2^{4}{\cdot} 5^{2}{\cdot} 7^{11}{\cdot} 19{\cdot} 43$&
$7$&
\\
$\Smc$ &
$\POm_{10}^{-}(2)$ &
$\M_{12}$ &
$2^{14}{\cdot} 3^{3}{\cdot} 5{\cdot} 7{\cdot} 17$ &
$11$ &
\\
$\Smc$ &
$\POm_{10}^{-}(2)$ &
$\A_{12}$ &
$2^{11}{\cdot} 3{\cdot} 17$&
$11$ &
\\
$\Smc$ &
$\POm_{10}^{+}(3)$ &
$\A_{12}$ &
$2^{6}{\cdot} 3^{15}{\cdot} 11{\cdot} 13{\cdot} 41$&
$11$ &
\\
$\Cmc_{4}$ &
$\POm_{12}^{+}(q)$ &
$\PSp_{6}(q){\times}\PSp_{2}(q)$ &
$q^{20}(q^{10}-1)(q^{8}-1)(q^6-1)/(q^2-1)$ &
$q^3+1$&
\\
$\Smc$ &
$\POm_{12}^{-}(2)$ &
$\A_{13}$ &
$2^{21}{\cdot} 3{\cdot} 5{\cdot} 17{\cdot} 31$ &
$13$&
\\
$\Smc$ &
$\POm_{14}^{+}(2)$ &
$\A_{16}$ &
$2^{28}{\cdot} 3^{2}{\cdot} 17{\cdot} 31{\cdot} 127$ &
$13$&
\\
$\Smc$ &
$\POm_{16}^{+}(2)$ &
$\A_{17}$ &
$2^{42}{\cdot} 3^{4}{\cdot} 5{\cdot} 17{\cdot} 31{\cdot} 43{\cdot} 127$ &
$17$&
\\
$\Smc$ &
$\POm_{18}^{-}(2)$ &
$\A_{20}$ &
$2^{55}{\cdot} 3^{5}{\cdot} 17{\cdot} 31{\cdot} 43{\cdot} 127{\cdot} 257$ &
$19$&
\\
$\Smc$ &
$\POm_{20}^{-}(2)$ &
$\A_{21}$ &
$2^{73}{\cdot} 3^{4}{\cdot} 5^{2}{\cdot} 17{\cdot} 31{\cdot} 41{\cdot} 43{\cdot} 73{\cdot} 127{\cdot} 257$ &
$19$&
\\
$\Smc$ &
$\POm_{22}^{+}(2)$ &
$\A_{24}$ &
$2^{89}{{\cdot}}3^4{{\cdot}}5^2{{\cdot}}17{{\cdot}}31^2{{\cdot}}41{{\cdot}}43{{\cdot}}73{{\cdot}}89{{\cdot}}127{{\cdot}}257$ &
$23$&
\\
\hline
%\scriptsize
%Note: & \multicolumn{4}{p{10cm}}{\scriptsize The value $u_{r}$ is an upper bound for the parameter $r$ in Proposition~\ref{prop:class-s}.}\\
\end{tabular}
}
\end{table}

\begin{proposition}\label{prop:class-s}
Let $\Dmc$ be a nontrivial $2$-design with prime replication number $r$, and let $\alpha$ be a point of $\Dmc$. Suppose that $G$ is an automorphism group of $\Dmc$ of almost simple type with socle $X$ being a finite classical simple group of Lie type in characteristic $p$ and of dimension at least $3$. If $G$ is flag-transitive, then  $H=G_{\alpha}$ is maximal in $G$ and $H$ belongs to one of the geometric families  $\Cmc_{i}$, for $i=1,2,3,5,8$.
\end{proposition}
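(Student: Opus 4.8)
The plan is to combine point-primitivity with the fact that $H$ must be a \emph{large} maximal subgroup, and then to sieve the known classification of such subgroups against the arithmetic forced by a prime replication number. Since $G$ is flag-transitive, Proposition~\ref{prop:flag} shows that $G$ is point-primitive, so $H=G_{\alpha}$ is maximal in $G$ and does not contain $X$ (see \cite[Corollary~1.5A]{b:Dixon}); this already gives the maximality assertion. By Corollary~\ref{cor:large} we have $|G|\leq |H|^{3}$, and substituting $v=|X{:}H\cap X|$ from Lemma~\ref{lem:New} into the sharper estimate of Lemma~\ref{lem:coprime} (available here because $X\neq\PSL_{2}(q)$ by the dimension hypothesis) yields
\begin{align}\label{eq:large-bound}
|X|<|H\cap X|{\cdot}|H\cap X|_{p'}^{2}\leq |H\cap X|^{3}.
\end{align}
Hence $H\cap X$ is a large maximal subgroup of $X$, and by Aschbacher's Theorem~\cite{a:Aschbacher} the subgroup $H$ lies in one of the families $\Cmc_{1},\dots,\Cmc_{8}$ or in $\Smc$. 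It therefore suffices to exclude membership in $\Cmc_{4}$, $\Cmc_{6}$, $\Cmc_{7}$ and $\Smc$.

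For this I would appeal to the classification of large maximal subgroups of finite classical groups in \cite{a:AB-Large-15}. Restricted to the four families to be excluded, largeness in the sense of~\eqref{eq:large-bound} confines $H\cap X$ to the finite list reproduced in Table~\ref{tbl:class-s}: there are no large $\Cmc_{7}$-subgroups in dimension at least $3$, so that family is empty, while $\Cmc_{4}$, $\Cmc_{6}$ and $\Smc$ contribute only a handful of generic families parametrised by $q$ together with finitely many sporadic entries for specific $X$. For each candidate one records $v=|X{:}H\cap X|$ and the largest prime $u_{r}$ dividing $|H\cap X|_{p'}$; by Lemma~\ref{lem:coprime} the prime $r$ divides $|H\cap X|_{p'}$, so $r\leq u_{r}$.

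The final step is to eliminate each candidate using the design constraints. For every generic family the degree of $v$ as a polynomial in $q$ (or in $q_{0}$) exceeds twice that of $u_{r}$, so that $r^{2}\leq u_{r}^{2}<v\leq\lambda v$ for all $q$ in the admissible range, contradicting Lemma~\ref{lem:six}(c); this disposes of the infinite families, including the $\Smc$-subgroup $\Sz(q)$ of $\PSp_{4}(q)$. Among the sporadic entries, the overwhelming majority already satisfy $u_{r}^{2}<v$ and are removed in the same way. The main obstacle is the small residue in which $r=u_{r}$ satisfies both $r^{2}>v$ and the divisibility relation $r\mid v-1$ with $\gcd(r,v)=1$ of Lemma~\ref{lem:six}(a)---for example $X=\PSU_{3}(3)$ with $H\cap X=\PSL_{2}(7)$, where $v=36$ and $r=7$. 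For these I would first solve $r(k-1)=\lambda(v-1)$ and $vr=bk$ under $2<k<v-1$ to list the admissible parameter tuples, and then rule them out by the finer subdegree test of Lemma~\ref{lem:six}(d) (the prime $r$ must divide every nontrivial subdegree of $H$ on the point set) backed by a direct computation in \textsf{GAP}~\cite{GAP4}; the entries arising from the exceptional isomorphisms in~\eqref{eq:iso}, such as $\PSL_{4}(2)\cong\A_{8}$, need no separate work as they are already covered by Proposition~\ref{prop:alt-spor}. Having excluded $\Cmc_{4}$, $\Cmc_{6}$, $\Cmc_{7}$ and $\Smc$, we conclude that $H\in\Cmc_{i}$ for some $i\in\{1,2,3,5,8\}$.
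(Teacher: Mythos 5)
Your proposal follows the paper's proof almost step for step: maximality from point\hyphenation{primitivity}-primitivity, the bound $|X|<|H\cap X|{\cdot}|H\cap X|_{p'}^{2}$ from Lemma~\ref{lem:coprime}, the classification of large maximal subgroups in \cite{a:AB-Large-15} to exclude $\Cmc_{7}$ and to reduce $\Cmc_{4}$, $\Cmc_{6}$ and $\Smc$ to the finite list of Table~\ref{tbl:class-s}, and the test $\lambda v<r^{2}$ against $r\leq u_{r}$ to eliminate almost every entry. The only divergence is in how the three survivors are dispatched. For $(\PSL_{4}(2),\A_{7})$ your deferral to Proposition~\ref{prop:alt-spor} via $\PSL_{4}(2)\cong\A_{8}$ is legitimate (the paper instead derives $b=14$, $k=4$ and notes that $G$ has no subgroup of index $14$), and $(\PSp_{4}(2),\A_{5})$ dies already at the divisibility stage since $v-1=11$ is coprime to $|H\cap X|_{2'}=15$. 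But for $(\PSU_{3}(3),\PSL_{2}(7))$ --- the case you yourself single out as the main obstacle --- the subdegree test you name does not bite: the action of $\PSU_{3}(3)$ on the $36$ cosets of $\PSL_{2}(7)$ has rank $3$ with nontrivial subdegrees $14$ and $21$, both divisible by $r=7$, so Lemma~\ref{lem:six}(d) yields nothing and your argument there rests entirely on an unspecified \textsf{GAP} computation. The paper closes this case with a cleaner observation worth adopting: flag-transitivity forces block-transitivity, so after solving the parameter equations ($b=42$, $k=6$, $\lambda=1$) the group $G=\PSU_{3}(3)$ would need a subgroup of index $42$, i.e.\ of order $144$, and inspection of its maximal subgroups (of orders $216$, $168$, $96$, $96$) shows none can contain one.
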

\begin{proof}
By Proposition~\ref{prop:flag}, the point-stabliliser $H$ is a maximal subgroup of $G$. Then by Lemma~\ref{lem:coprime}, $H$ is a large maximal subgroup of $G$ such that $|X|\leq |H\cap X|^{3}$. By \cite[Theorem 7]{a:AB-Large-15}, we conclude that $H$ does not belong to $\Cmc_{7}$. Assume now that $H$ belongs to $\Cmc_{6}$, $\Cmc_{4}$ or $\Smc$. We again apply \cite[Theorem 7]{a:AB-Large-15} and obtain the pairs $(X,H\cap X)$ listed in Table~\ref{tbl:class-s} can be read off from \cite[Tables 3 and 7]{a:AB-Large-15}. For each such $H\cap X$, by \eqref{eq:v}, we obtain $v$ as in the third column of Table~\ref{tbl:class-s}. Moreover, Lemma \ref{lem:coprime} says that $r$ divides $|H\cap X|_{p'}$, and so we can find an upper bound $u_{r}$ of $r$ as in the fourth column of Table~\ref{tbl:class-s}.  The inequality $\lambda v<r^{2}$ rules out all cases except for $(X,H\cap  X)$ being $(\PSL_{4}(2),\A_{7})$, $(\PSU_{3}(3),\PSL_{2}(7))$, or $(\PSp_{4}(2),\A_{5})$. If $H\cap  X\cong\A_{7}$ and $X=\PSL_{4}(2)$, then $v=8$, and since $r$ divides $v-1$ by Lemma~\ref{lem:six}(a), we conclude that $r=7$. Since also $bk=vr$, it follows that $b=14$ and $k=4$. This says that $G$  has a subgroup of index $14$, which is impossible. If $H\cap X\cong \PSL_{2}(7)$ and $X=\PSU_{3}(3)$, then $v=36$, and since $r$ divides both $v-1=35$ and $|H\cap X|_{7'}$ which is a divisor of $2^{4}\cdot 7$, it follows that $r=7$. We now apply Lemma~\ref{lem:six}(a)-(b) and conclude that $b=42$, $k=6$ and $\lambda=1$. This implies that $G=\PSU_{3}(3)$ has a subgroup of index $42$, which is a contradiction. If $H\cap X\cong \A_{5}$ and $X=\PSp_{4}(2)$, then $v=12$, and this case can also be ruled out as $r$ divides both $v-1=11$ and $|H\cap X|_{2'}$ which is a divisor of $3\cdot 5$.
\end{proof}

\begin{proposition}\label{prop:psl2}
Let $\Dmc$ be a nontrivial $2$-design with prime replication number $r$. Suppose that $G$ is an automorphism group of $\Dmc$ of almost simple type with socle $X=\PSL_{2}(q)$ for $q\geq 4$. If $G$ is flag-transitive, then {\rm Theorem \ref{thm:main}(a)} holds or $(v,b,r,k,\lambda)$, $G$, $X$ and $G_{\alpha}\cap X$ are as in line $1$,$3$ or $4$ of {\rm Table~\ref{tbl:main}}.
\end{proposition}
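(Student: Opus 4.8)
The plan is to exploit the well-known classification of maximal subgroups of $\PSL_2(q)$ (Dickson's theorem) together with the three arithmetic constraints supplied by Lemma~\ref{lem:six}: that $r$ is a prime dividing both $|H|$ and $v-1$, with $\lambda v < r^2$. Since $G$ is flag-transitive, Proposition~\ref{prop:flag} guarantees $H=G_\alpha$ is maximal in $G$, and~\eqref{eq:v} gives $v=|X|/|H\cap X|=|\PSL_2(q)|/|H\cap X|$. First I would enumerate the maximal subgroups $H\cap X$ of $X=\PSL_2(q)$: the Borel subgroup $E_q{:}((q-1)/d)$ (with $d=\gcd(2,q-1)$), the dihedral subgroups $\D_{2(q\pm1)/d}$, the subfield subgroups $\PSL_2(q_0)$ and $\PGL_2(q_0)$ where $q=q_0^t$, and the three sporadic subgroups $\A_4$, $\S_4$, $\A_5$. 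For each family I would compute $v$, read off the possible prime values of $r$ from the divisors of $|H\cap X|_{p'}$ (or $|H\cap X|$ in the subfield/sporadic cases), and then test the divisibility condition $r\mid v-1$ together with the inequality $\lambda v<r^2$.

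The bulk of the argument is the Borel case, since that is where the infinite family of Theorem~\ref{thm:main}(a) arises. Here $H\cap X$ is the point stabiliser of the natural $2$-transitive action on the projective line, so $v=q+1$ and $r$ must divide $|H\cap X|=q(q-1)/d$. Because $r\mid v-1=q$ forces $r=p$, whereas $r\mid q(q-1)/d$ with $r$ coprime to $q$ would be needed for a parabolic point stabiliser, I expect this case to split according to whether $r=p$ or $r\mid (q-1)$; one of these branches should be eliminated quickly by $\lambda v<r^2$, and the surviving configuration should reproduce the projective line, which is excluded as a nontrivial design only under the stated parameter restrictions. The Witt--Bose--Shrikhande space of Theorem~\ref{thm:main}(a) instead emerges from the dihedral case $H\cap X=\D_{2(q+1)}$, where $v=q(q-1)/2$ and $r=q+1$ must be a Fermat prime; here I would verify that $r=q+1$ divides $v-1=(q^2-q-2)/2=(q+1)(q-2)/2$ and that the design parameters $k=q/2$, $\lambda=1$, $b=q^2-1$ are forced by $r(k-1)=\lambda(v-1)$ and $vr=bk$, matching the known space $\W(2^n)$.

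The remaining cases are finite checks. For the subfield subgroups $\PSL_2(q_0)$ and $\PGL_2(q_0)$ the index $v$ grows roughly like $q^{t-1}$ while $r$ is bounded by the order of the subfield group, so $\lambda v<r^2$ should fail for all but a handful of small $q$, which I would dispose of by direct computation (using \textsf{GAP} if needed). The three exceptional subgroups $\A_4,\S_4,\A_5$ give only finitely many $q$ (those with $q\equiv\pm1$ modulo the relevant small primes), and for each the candidate prime $r$ lies in a tiny explicit set; testing $r\mid v-1$ and $\lambda v<r^2$ against the admissible design equations should leave exactly the entries recorded in lines~$1$, $3$ and~$4$ of Table~\ref{tbl:main} (namely the cases $\A_5$ with $q=5$, and $\S_4$ and $\A_5$ with $q=7,11$). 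The main obstacle I anticipate is keeping the Borel/dihedral bookkeeping clean enough to distinguish the genuine infinite family~(a) from the degenerate projective-line case, since both live at small index and require careful use of the primitivity and nontriviality hypotheses $2<k<v-1$ to separate them.
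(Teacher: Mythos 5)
Your overall strategy --- enumerate the maximal subgroups of $\PSL_{2}(q)$ via Dickson's theorem and sieve them with $r\mid v-1$, $r\mid |H|$ and $\lambda v<r^{2}$ --- is essentially the route the paper takes (the paper first shows $r\nmid|\Out(X)|$, deduces $|X|<|H\cap X|^{3}$, and reads the candidates off the classification of large maximal subgroups, but the resulting list is the same as yours). However, the execution you sketch contains concrete errors that would lead to a wrong conclusion.

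First, the Borel case is mishandled. Since $v=q+1$ and $r\mid v-1=q$, the only possibility is $r=p$, and then $v<r^{2}$ forces $q=p$ prime; there is no ``split'' between $r=p$ and $r\mid q-1$, and the surviving configuration is \emph{not} eliminated as a degenerate projective line. It is precisely the source of line~$3$ of Table~\ref{tbl:main}: $X=\PSL_{2}(7)$ with $G_{\alpha}\cap X=7{:}3$, $v=8$, $r=7$, giving the nontrivial $2$-$(8,4,3)$ design. Your plan, as written, would discard this case and lose that line. Second, your attribution of the table entries to the exceptional subgroups $\A_4,\S_4,\A_5$ is wrong: the subgroup $\S_{4}\leq\PSL_{2}(7)$ gives $v=7$, $r=3$ and the Fano plane (line~$2$, not line~$3$); the subgroup $\A_{5}$ is not a proper subgroup of $\PSL_{2}(5)$ at all, and line~$1$ ($v=6$, stabiliser $\D_{10}$) in fact arises from the dihedral subgroup $\D_{2(q+1)}$ of $\PSL_{2}(4)\cong\A_{5}$, i.e.\ from the same family as the Witt--Bose--Shrikhande spaces (and is disposed of via the alternating-socle result, which is why the paper reduces to $q\geq 7$ at the outset). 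Only line~$4$ ($\A_{5}\leq\PSL_{2}(11)$, $r=5$) is correctly placed. Finally, after the arithmetic sieve four cases survive (Borel with $r=p$, $\D_{2(q+1)}$ with $r\mid q+1$, $\S_{4}$ with $(r,q)=(3,7)$, and $\A_{5}$ with $(r,q)=(5,11)$), and determining which $(k,\lambda)$ actually support a flag-transitive design --- e.g.\ forcing $\lambda=1$ and $k=2^{n-1}$ in the Witt--Bose--Shrikhande family --- requires genuinely more than $r(k-1)=\lambda(v-1)$ and $vr=bk$; the paper imports this from prior work on flag-transitive linear spaces and on non-symmetric designs with two-dimensional linear groups, and your proposal does not supply a substitute for that step.
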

\begin{proof}
We can assume by \eqref{eq:iso} and Proposition~\ref{prop:alt-spor} that $q\geq 7$.
Note by Lemmas \ref{lem:New} and~\ref{lem:six}(c) that $r$ divides $|\Out(X)|{\cdot} |H\cap X|$. If $r$ divides $|\Out(X)|=\gcd(2,q-1){\cdot}a$, where $q=p^{a}$, then $r=2$ or $r$ divides $a$. We now apply Lemma~\ref{lem:min-deg} and by the same argument as in the proof of Lemma~\ref{lem:coprime}, we observe that $r$ cannot be a divisor of $|\Out(X)|$. Thus $r$ divides $|H\cap X|$, and hence \eqref{eq:v} and Lemma~\ref{lem:six}(c) implies that $|X|< |H\cap X|^{3}$. We are now ready to apply \cite[Theorem 7 and Proposition 4.7]{a:AB-Large-15} and \cite[Theorems 1.1 and 2.2]{a:gmax}, and conclude that $G$ and $H$ are as in one of the groups in Table~\ref{tbl:psl2} or $H\cap X$ is one of the following groups:
\begin{enumerate}[{\rm \quad(i)}]
\item $q{:}\frac{q-1}{\gcd(2, q-1)}$;
\item $\PSL(2, q_{0})$, for $q=q_{0}^3\geq 27$;
\item $\PGL(2, q_{0})$, for $ q= q_{0}^2\geq 9$;
\item $\D_{2(q-1)/\gcd(2, q -1)}$, if $q$ odd, then $q\geq 13$;
\item $\D_{2(q+1)/\gcd(2, q -1)}$, if $q$ odd, then $q\neq 7,9$;
\item $\A_{4}$, for $q\in\{5,13\}$;
\item $\S_{4}$, for $q\in\{7,17,23\}$;
\item $\A_{5}$, for $q\in\{9,11,19,29,31,41,49,59,61,71\}$.
\end{enumerate}
Since $r$ is a prime number dividing $\gcd(v-1,|H\cap X|)$, by examining the fact that $v<r^{2}$, we conclude that $H\cap X$ is isomorphic to one of the  groups $p {:} \frac{p-1}{\gcd(2,p-1)}$ for $r=p$, $\D_{2(q+1)}$ for $r$ dividing $q+1$,
$\S_{4}$ for $(r,q)=(3,7)$, and $\A_{5}$ for $(r,q)=(5,11)$. Then by ~\cite{a:ABD-PSL2,a:delan-linear-space,a:delan-sax,a:delan-flagt-simple} and the same argument as in~\cite[Lemmas 3.1,3.3 and 3.7]{a:Non-symmetric}, the assertion holds.
\end{proof}

\begin{table}%[h]
%\centering
\scriptsize
\caption{Possible groups $G$ and $H$ in Proposition~\ref{prop:psl2}}\label{tbl:psl2}
\begin{tabular}{llc}
\hline
$G$& $H$ &$|G:H|$ \\
\hline
$\PGL(2,7)$ & $\N_{G}(\D_{6})=\D_{12}$ & $28$\\
$\PGL(2,7)$& $\N_{G}(\D_{8})= \D_{16}$ & $21$\\
$\PGL(2,9)$ & $\N_{G}(\D_{10})= \D_{20}$ & $36$\\
$\PGL(2,9)$ & $\N_{G}(\D_{8})= \D_{16}$ & $45$\\
$\M_{10}$ & $\N_{G}(\D_{10})= \C_{5}\rtimes  \C_{4}$ & $36$\\
$\M_{10}$ & $\N_{G}(\D_{8})= \C_{8} \rtimes  \C_{2}$ & $45$\\
$\PGaL(2,9)$ & $\N_{G}(\D_{10})= \C_{10} \rtimes  \C_{4}$ & $36$\\
$\PGaL(2,9)$ & $\N_{G}(\D_{8})= \C_{8} \cdot \Aut(\C_{8})$ & $45$\\
$\PGL(2,11)$ & $\N_{G}(\D_{10})= \D_{20}$ & $66$\\
$\PGL(2,11)$& $\N_{G}(\A_{4})= \S_{4}$ & $55$\\
\hline
\end{tabular}
\end{table}
Note by \eqref{eq:iso} and Propositions~\ref{prop:alt-spor} and~\ref{prop:psl2} that for $X=\PSL_{n}(q)$ in Proposition~\ref{prop:psl} below, we only need to consider the case where $n\geq 3$ and $(n,q)\neq (3,2)$ and $(4,2)$.

\begin{proposition}\label{prop:psl}
Let $\Dmc$ be a nontrivial $2$-design with prime replication number $r$. Suppose that $G$ is an automorphism group of $\Dmc$ of almost simple type with socle $X=\PSL_{n}(q)$ with $ n\geq 3$ and $(n,q)\neq (3,2)$ and $(4,2)$. If $G$ is flag-transitive and $H=G_{\alpha}$ with $\alpha$ a point of $\Dmc$, then $H\cap X\cong \,^{\hat{}}[q^{n-1}]{:}\SL_{n-1}(q){\cdot} (q-1)$ is a parabolic subgroup, $v=(q^{n}-1)/(q-1)$ and $r$ is a primitive divisor of $(q^{n-1}-1)/(q-1)$.
\end{proposition}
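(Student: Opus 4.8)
The plan is to run through the geometric families left open by Proposition~\ref{prop:class-s} — namely $\Cmc_i$ for $i\in\{1,2,3,5,8\}$ — and to show that only the two end‑node parabolics survive. Two consequences of the hypotheses will be used throughout. First, by Lemma~\ref{lem:coprime} the prime $r$ divides $|H\cap X|_{p'}$, so $r\neq p$; moreover the proof of that lemma shows $r$ to be coprime to $|\Out(X)|$. Second, by Lemma~\ref{lem:min-deg} we have $v\geq p(X)=(q^n-1)/(q-1)>q^{\,n-1}$, and $v<r^2$ by Lemma~\ref{lem:six}(c), so
\begin{align*}
r>q^{(n-1)/2}.
\end{align*}

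Suppose first that $H$ is a $\Cmc_1$‑subgroup, so $H\cap X=P_m$ stabilises an $m$‑subspace $U$ and $X$ acts on the $m$‑subspaces of the natural module. Put $M=\max\{m,n-m\}$. The $X_\alpha$‑orbit consisting of those $m$‑subspaces meeting $U$ in dimension $m-1$ has length $d_1=q\binom{m}{1}_q\binom{n-m}{1}_q=q\cdot\tfrac{q^m-1}{q-1}\cdot\tfrac{q^{n-m}-1}{q-1}$. The $H$‑orbit (a $G$‑subdegree) through such a subspace is a union of $X_\alpha$‑orbits permuted by $H/(H\cap X)$, hence has length $d_1$ times a divisor of $|\Out(X)|$; by Lemma~\ref{lem:six}(d) and the coprimality of $r$ with $|\Out(X)|$ we deduce $r\mid d_1$, and then $r\neq p$ forces $r\mid(q^m-1)(q^{n-m}-1)$, so $r\mid q^{M}-1$ and $r\leq q^{M}-1$. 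If $2\leq m\leq n-2$, then $\min\{m,n-m\}\geq2$ and
\begin{align*}
r^2<q^{2M}\leq q^{\,m(n-m)}<\binom{n}{m}_q=v,
\end{align*}
contradicting $v<r^2$. Hence $m\in\{1,n-1\}$, giving $H\cap X=\,^{\hat{}}[q^{n-1}]{:}\SL_{n-1}(q){\cdot}(q-1)$ and $v=(q^n-1)/(q-1)$. Here $r\mid(q^{n-1}-1)/(q-1)$; if $e$ is the order of $q$ modulo $r$ then $e\mid n-1$ and $q^e>r>q^{(n-1)/2}$ force $e=n-1$, so $r$ is a primitive divisor of $(q^{n-1}-1)/(q-1)$, as claimed.

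It remains to eliminate $i\in\{2,3,5,8\}$. For the ``deep'' members I compare the two sides of the inequality $|X|<|H\cap X|{\cdot}|H\cap X|_{p'}^{2}$ of Lemma~\ref{lem:coprime}, reading off orders from \cite{b:KL-90} and comparing their $q$‑degrees. A $\Cmc_3$‑ or $\Cmc_5$‑subgroup of prime index $s$ has $|H\cap X|$ of $q$‑degree $\approx n^2/s$ and $|H\cap X|_{p'}$ of degree $\approx n^2/(2s)$, so the right–hand side has degree $\approx 2n^2/s$, which is smaller than $\deg|X|=n^2-1$ once $s\geq3$; similarly a $\Cmc_2$‑subgroup $\GL_{n/t}(q)\wr\S_t$ gives right–hand degree $\approx 2n^2/t$, failing the inequality for $t\geq3$. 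These cases are therefore excluded outright.

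The hard part will be the borderline configurations that survive this crude estimate: the imprimitive $\GL_{n/2}(q)\wr\S_2$, the field‑ and subfield‑subgroups of index $2$, and the $\Cmc_8$‑subgroups $\Sp_n(q)$, $\SO_n^{\e}(q)$ and $\SU_n(q_0)$ (with $q=q_0^2$), all of which have $|H\cap X|$ of $q$‑degree close to $n^2/2$. For these I would again use $r>q^{(n-1)/2}$ together with $r\mid|H\cap X|_{p'}$: the order $e$ of $q$ modulo $r$ is then large and explicitly bounded, so $r$ is a primitive prime divisor of $q^e-1$. Feeding $q^e\equiv1\ (\mathrm{mod}\ r)$ into the condition $r\mid v-1$ of Lemma~\ref{lem:six}(a) collapses $v-1$ modulo $r$ to a short polynomial in $q$ that is divisible by $r$ only if $r\mid q$ (i.e.\ $r=p$, impossible) or if $r$ is too small for $r^2>v$. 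For example, for $\Sp_4(q)\leq\SL_4(q)$ one has $v=q^2(q^3-1)$ and $r\mid(q^2-1)(q^4-1)$; the bound $r>q^{3/2}$ forces $r$ to be a primitive divisor of $q^4-1$, whence $r\mid q^2+1$ and $v-1\equiv-(q^2-q+1)\ (\mathrm{mod}\ r)$, so $r\mid q$, a contradiction. I expect this same mechanism to settle every remaining case, the uniform book‑keeping for the general $\Sp_n$, $\SO_n^{\e}$ and $\SU_n$ subgroups being the most laborious step; the finitely many small pairs $(n,q)$ not reached by the asymptotic estimates can be checked directly with \textsf{GAP}.
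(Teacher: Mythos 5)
Your handling of the parabolic case is essentially sound and reaches the correct conclusion: bounding $r$ via the subdegree $q\cdot\frac{q^m-1}{q-1}\cdot\frac{q^{n-m}-1}{q-1}$ and Lemma~\ref{lem:six}(d) is a legitimate alternative to the paper's route (which instead reads the bound $r\leq q^{n-m}-1$ off the Levi structure of $H\cap X$ via Lemma~\ref{lem:coprime}), and your derivation of primitivity of $r$ from $r>q^{(n-1)/2}$ matches the paper's. However, there are two genuine gaps. First, you silently identify ``$\Cmc_1$-subgroup'' with ``$P_m$''. When $G$ contains a graph automorphism, the reducible maximal subgroups are stabilisers of pairs $\{U,W\}$ with $\dim U+\dim W=n$ and either $U\subset W$ or $V=U\oplus W$; these are not stabilisers of a single subspace, your subdegree computation does not apply to them, and they must be excluded separately (the paper kills $U\subset W$ via the $p$-power subdegree and $V=U\oplus W$ via an order count giving $m<1$).

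Second, the elimination of $\Cmc_2,\Cmc_3,\Cmc_5,\Cmc_8$ is largely a plan rather than a proof, and the assertion that the $q$-degree comparison excludes the $\Cmc_3$- and $\Cmc_5$-subgroups of prime index $\geq 3$ ``outright'' is false for small $q$. Concretely, $X=\PSL_3(3)$ with $H\cap X\cong 13{:}3$ (a $\Cmc_3$-subgroup with $t=3$) survives every inequality in your toolkit: here $v=144$, $|H\cap X|_{p'}=13$, so $r=13$ and $v<r^2=169$, forcing $\lambda=1$, $k=12$, $b=156$. This putative design is only eliminated by a structural argument --- neither $\PSL_3(3)$ nor $\PSL_3(3){:}2$ has a subgroup of index $156$ to serve as a block stabiliser --- which your write-up does not supply (your deferral of ``finitely many small pairs'' to \textsf{GAP} would presumably catch it, but the case is not a routine numerical check and deserves to be flagged). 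Likewise the ``uniform book-keeping'' for $\Sp_n(q)$, $\SO_n^{\e}(q)$, $\SU_n(q_0)$ and $\GL_{n/2}(q)\wr\S_2$ is stated as an expectation; your worked $\Sp_4(q)$ example is correct, but the paper disposes of it more simply by the direct comparison $v=q^2(q^3-1)/\gcd(2,q-1)>(q^2+1)^2\geq r^2$, and for the remaining types it uses explicit lower bounds for $v$ from the large-subgroup tables of \cite{a:AB-Large-15} rather than the congruence mechanism you propose.
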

\begin{proof}
Suppose that $H_{0}=H\cap X$, where $H=G_{\alpha}$ with $\alpha$ a point of $\Dmc$. Since the point-stabiliser $H$ is maximal in $G$, by Lemma \ref{lem:coprime}, Proposition~\ref{prop:class-s} and \cite[Theorem 7 and Proposition 4.7]{a:AB-Large-15}, one of the following holds:
\begin{enumerate}[\rm (1)]
\item $H \in \Cmc_{1}{\cup}\Cmc_{8}$;
\item $H$ is a $\Cmc_2$-subgroup of type $\GL_{n/t}(q)\wr\S_t$ with $t=2,3$;
\item $H$ is a $\Cmc_3$-subgroup of type $\GL_{n/t}(q^t)$ with $t=2,3$;
\item $H$ is a $\Cmc_5$-subgroup of type $\GL_{n}(q_{0})$ with $q=q_{0}^{t}$ and $t=2, 3$.
\end{enumerate}
In what follows, we analyse each of these possible cases separately. \smallskip

\noindent\textbf{(1)} Let $H$ be in $\Cmc_{1}$. In this case $H$ is reducible, that is, $H \cong P_{m}$ stabilises a subspace of $V$ of dimension $m$ with $2m\leq n$ or $G$ contains a graph automorphism and $H$ stabilises a pair $\{U,W\}$ of subspaces of dimension $m$ and $n-m$ with $2m<n$.\smallskip

Suppose first that $H\cong P_{m}$ for some $2m\leq n$. Then by~\cite[Proposition 4.1.17]{b:KL-90}, we have that
\begin{align*}
H_{0}\cong \,^{\hat{}}[q^{m(n-m)}]{:}\SL_{m}(q){\times }\SL_{n-m}(q){\cdot} (q-1).
\end{align*}
Then by \eqref{eq:v}, we observe that $v>q^{m(n-m)}$. Note that $r$ is prime and $2m\leq n$. By Lemma~\ref{lem:coprime}, we conclude that $r$ divides $|H_{0}|_{p'}$, and so does $q^{i}-1$, for some $1\leq i\leq \max\{m,n-m\}=n-m$. Therefore, $r\leq q^{n-m}-1$. It follows from Lemma~\ref{lem:six}(c) that $q^{m(n-m)}< \lambda v<r^2\leq (q^{n-m}-1)^2<q^{2(n-m)}$. Thus $q^{m(n-m)}<q^{2n-2m}$, and so $m(n-m)<2n-2m$ implying that $m=1$. In this case, $v=(q^{n}-1)/(q-1)$, and so $v-1=q(q^{n-1}-1)/(q-1)$. Since $r$ divides $v-1$, it follows from Lemma~\ref{lem:coprime} that $r$ divides $(q^{n-1}-1)/(q-1)$. If $r$ divides $q^{i}-1$, for some $i\leq n-2$, then $r$ is a divisor of $\gcd(q^{i}-1,q^{n-1}-1)=q^{\gcd(i,n-1)}-1$, and since $\gcd(i,n-1)\leq [(n-1)/2]$, we conclude that $r\leq q^{[(n-1)/2]}-1$. Since also $v<r^{2}$, it follows that $q^{n}-1<(q-1)(q^{[(n-1)/2]}-1)^{2}$ implying that $q^{n}<q^{n-1}(q-1)$, which is impossible. Therefore, $r$ is a primitive divisor of $(q^{n-1}-1)/(q-1)$.\smallskip

Suppose now that $H$ stabilises a pair $\{U,W\}$ of subspaces of dimension $m$ and $n-m$ with $2m<n$. Assume first that $U\subset W$. In this case, $G$ has a subdegree which is a power of $p$, and so $r=p$ by Lemma~\ref{lem:six}(d). But this is impossible by Lemma \ref{lem:coprime}. Assume now that $V=U\oplus W$. Then by~\cite[Proposition 4.1.4]{b:KL-90}, $H_{0}\cong \,^{\hat{}}\SL_{m}(q)\times \SL_{n-m}(q)$. Then Lemma~\ref{lem:coprime} implies that $r$ divides $|H_{0}|_{p'}$, and so does $q^{i}-1$, for some $1\leq i\leq \max\{m,n-m\}=n-m$. Therefore, $r\leq q^{n-m}-1$. Note by~\eqref{eq:v} that $v>q^{2m(n-m)}$. Then Lemma~\ref{lem:six}(c) implies that $q^{2m(n-m)}< \lambda v<r^2\leq (q^{n-m}-1)^2<q^{2(n-m)}$, that is to say, $q^{2m(n-m)}<q^{2n-2m}$, and so $2m(n-m)<2n-2m$ implying that $m<1$, which is impossible.\smallskip

Let now $H$ be in $\Cmc_{8}$. In this case $H$ is a classical group. Then by~\cite[Propositions 4.8.3, 4.8.4 and 4.8.5]{b:KL-90}, $H_{0}$ is isomorphic to one of the following groups:
\begin{enumerate}[]
\item $\,^{\hat{}}\Sp_{2m}(q){\cdot} \gcd(m, q-1)$ with $n=2m\geq 4$,
\item $\PSO_{2m+1}(q)$ with $m\geq 1$ and $q$ odd,
\item $\PSO_{2m}^{\e}(q)$ with $m\geq 2$, $q$ odd and $\e =\pm$,
\item $\,^{\hat{}}\SU_{n}(q^{\frac{1}{2}}){\cdot} \gcd(n, q^{\frac{1}{2}}-1)$ with $n\geq 3$ and $q$ a square.
\end{enumerate}
Define
\[
g(q)=
\begin{cases}
q^{2m^{2}-m-3},& \text{if} \quad H_{0}= \,^{\hat{}}\Sp_{2m}(q){\cdot} \gcd(m, q-1), \\
q^{2m^{2}+3m-1},& \text{if} \quad H_{0}= \PSO_{2m+1}(q), \\
q^{2m^{2}+m-2},& \text{if} \quad H_{0}= \PSO_{2m}^{\e}(q), \\
q^{\frac{n^{2}-4}{2}},& \text{if} \quad H_{0}= \,^{\hat{}}\SU_{n}(q^{\frac{1}{2}}){\cdot} \gcd(n, q^{\frac{1}{2}}-1).
\end{cases}
\]
We now apply \cite[Corollaries 4.2-4.3]{a:AB-Large-15} and conclude that $v>g(q)$ in relevant cases for $H_{0}$. Moreover, by Lemma~\ref{lem:coprime}, we conclude that $r\leq q^{s}+1$, where $s=n$ in the unitary case and $s=m$ in all other cases. Since $v<r^{2}$, it follows that $g(q)<(q^{s}+1)^{2}$, and hence we obtain the following possibilities:
\begin{enumerate}[(i)]
\item $\,^{\hat{}}\Sp_{4}(q){\cdot} \gcd(2, q-1)$,
\item $\,^{\hat{}}\SU_{3}(q^{\frac{1}{2}}){\cdot} \gcd(3, q^{\frac{1}{2}}-1)$.
\end{enumerate}
If (i) holds, then $v=q^2(q^3-1)/\gcd(2, q-1)$ and $r\leq q^{2}+1$, and so Lemma~\ref{lem:six}(c) yields $q^2(q^3-1)/\gcd(2, q-1)\leq \lambda v<r^2\leq (q^{2}+1)^2$, that is to say, $q^2(q^3-1)<\gcd(2, q-1)(q^{2}+1)^2$, which is impossible. If (ii) holds, then  $v\geq q^{\frac{3}{2}}(q+1)(q^{\frac{3}{2}}-1)/3$ with $q$ a square. Since $r\leq q^{\frac{3}{2}}+1$, it follows that $v>r^{2}$, which is a contradiction.\smallskip

\noindent\textbf{(2)} Let $H$ be  a $\Cmc_2$-subgroup of type $\GL_{n/t}(q)\wr\S_t$ with $t=2,3$. Then by~\cite[Proposition 4.2.9]{b:KL-90}, we have that $H_{0}\cong \,^{\hat{}}\SL_{m}(q)^t{\cdot} (q-1)^{t-1}{\cdot} \S_{t}$ with $n=mt$.\smallskip

It follows from~\eqref{eq:v} that $v>q^{n(n-m)}/(t!)$. Note here that $r$ is prime and $2m<n$. Then by Lemma~\ref{lem:coprime}, we conclude that $r\leq t(q^m-1)$, and so Lemma~\ref{lem:six}(c) implies that $q^{n(n-m)}/(t!)\leq\lambda v<r^2\leq t^2(q^m-1)^2<t^2q^{2m}$. Thus $q^{m^2t(t-1)-2m}<t^3(t-1)!$, where $t=2,3$. This inequality holds only for $(m, t, q)=(1, 3, 2)$ in which case we have that $r\leq 3$ and $v=28$, and so $v=28>9\geq r^{2}$, which is a contradiction.

\noindent\textbf{(3)} Let $H$ be  a $\Cmc_3$-subgroup of type $\GL_{n/t}(q^t)$ with $t=2,3$. Then by~\cite[Proposition 4.3.6]{b:KL-90}, we have that $H_{0}\cong \,^{\hat{}}\SL_{m}(q^t){\cdot} (q^t-1)(q-1)^{-1}{\cdot} t$ with $t$ prime and $n=mt$.\smallskip

By~\eqref{eq:v}, we have that $v>q^{n(n-m)/2}$. Since $r$ is prime, Lemma~\ref{lem:coprime} implies that $r\leq q^{mt}-1$, and so  Lemma~\ref{lem:six}(c) yields  $q^{n(n-m)/2}\leq \lambda v<r^2\leq (q^{mt}-1)^2<q^{2mt}$. This forces $n(n-m)<4n$, and so $m(t-1)=n-m<4$. This inequality holds only for $(m, t)\in\{(1, 2), (1, 3), (2, 2), (3, 2)\}$ and considering the fact that $n\geq 3$, we have $(m, t) \in \{(1, 3), (2, 2), (3, 2)\}$. In these cases, the groups $X$ and $H_{0}$ are as in Table~\ref{tbl:psl-c3}, and in each case we observe that the condition $v<r^2$ does not hold except for $X =\PSL_{3}(q)$ and $H\cap X\cong\, ^{\hat{}}(q^2+q+1){:}3$. In which case $v=q^3(q^2-1)(q-1)/3$, and $r$ divides $q^2+q+1$. It follows from Lemma \ref{lem:six}(c) that $q^3(q^2-1)(q-1)/3\leq \lambda v<r^2\leq (q^2+q+1)^2$, and so $q^3(q^2-1)(q-1)<3(q^2+q+1)^2$. This inequality holds only for $q=2, 3$. Since $(n,q)\neq (3,2)$, we obtain $q=3$ in which case $X=\PSL_{3}(3)$, $H\cap X\cong 13{:}3$ and $v=144$. Since $r$ divides both $v-1=143$ and $|H\cap X|_{3'}$ which is a divisor of $13$, it follows that $r=13$. We now apply Lemma~\ref{lem:six}(a)-(b) and conclude that $b=156$, $k=12$ and $\lambda=1$ but none of $\PSL_{3}(3)$ and $\PSL_{3}(3){:}2$ has subgroups of index $156$, which is a contradiction. \smallskip
%If $q=2$, then $v=8$, and since $r$ divides both $v-1=7$ and $|H\cap X|_{2'}$ which is a divisor of $3{{\cdot}}7$, it follows that $r=7$. We now apply Lemma~\ref{lem:six}(a)-(b) and conclude that $b=14$, $k=4$ and $\lambda=3$. It follows from \cite{b:Handbook, a:Non-symmetric} that $\Dmc$ is a unique $2$-design with parameters $(8, 14, 7, 4, 3)$ and flag-transitive and point-primitive automorphism group $G=\PSL(3,2){\cong}\PSL(2,7)$.

\begin{table}
\scriptsize
\caption{Some large maximal $\Cmc_{3}$-subgroups of some linear groups.}\label{tbl:psl-c3}
\begin{tabular}{lllll}
\hline
Line &
$X$ &
$H\cap X$ &
$v$ &
$u_{r}$ \\
\hline
$1$ &
$\PSL_{3}(q)$ &
$^{\hat{}}(q^2+q+1){:}3$ &
$q^3(q^2-1)(q-1)/3$ &
$q^2+q+1$
\\
$2$ &
$\PSL_{4}(q)$ &
$^{\hat{}}\SL_{2}(q^2){{\cdot}}(q+1){\cdot} 2$ &
$q^4(q^3-1)(q-1)/2$ &
$q^{2}+1$
\\
$3$ &
$\PSL_{6}(q)$ &
$^{\hat{}}\SL_{3}(q^2){{\cdot}}(q+1){\cdot} 2$&
$q^6(q^5-1)(q^3-1)(q-1)$&
$q^{5}-1$
\\
\hline
%\scriptsize
%Note: & \multicolumn{10}{p{13cm}}{\scriptsize The last column addresses to references in which a design with the parameters in the line has been constructed.}
\end{tabular}
\end{table}

\noindent\textbf{(4)} Let $H$ be  a $\Cmc_5$-subgroup of type $\GL_{n}(q_{0})$ with $q=q_{0}^{t}$ and $t=2, 3$. Then by~\cite[Proposition 4.5.3]{b:KL-90}, we see that $H_{0}\cong\,^{\hat{}} \SL_{n}(q_{0}){\cdot} \gcd (n, q-1/(q_{0}-1))$ with $q=q_{0}^{t}$ and $t=2, 3$.\smallskip

By \cite[Corollary 4.3]{a:AB-Large-15}, we have that $v>q_{0}^{m(n^{2}-2)-n^{2}+1}$, and since $r\leq q_{0}^{n}-1$, the inequality  $v<r^{2}$ forces that $n=3$ and $t=2$. In this case, $v=q_{0}^{3}(q_{0}^{3}+1)(q_{0}^{2}+1)/\gcd(3,q_{0}+1)$, and this case can also be ruled out as $v>q_{0}^{6}>r^{2}$.
\end{proof}

\begin{proposition}\label{prop:unitary}
Let $\Dmc$ be a nontrivial $2$-design with prime replication number $r$. Suppose that $G$ is an automorphism group of $\Dmc$ of almost simple type with socle $X$. If $G$ is flag-transitive, then the socle $X$ cannot be $\PSU_{n}(q)$ with $n\geq 3$ and $(n, q)\neq (3, 2), (4, 2)$.
\end{proposition}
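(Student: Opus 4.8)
The plan is to adapt the strategy of Proposition~\ref{prop:psl} to the unitary case, the essential difference being that here \emph{no} configuration should survive. Write $H_0=H\cap X$. By Proposition~\ref{prop:class-s} the subgroup $H$ is maximal and lies in one of the geometric families $\Cmc_1,\Cmc_2,\Cmc_3,\Cmc_5,\Cmc_8$, and \cite[Theorem 7 and Proposition 4.7]{a:AB-Large-15} together with \cite{b:KL-90} make the possibilities for $H_0$ explicit: a reducible subgroup in $\Cmc_1$ (a maximal parabolic $P_m$, or the stabiliser of a non-degenerate subspace, of type $\GU_m(q)\times\GU_{n-m}(q)$); an imprimitive subgroup in $\Cmc_2$ (of type $\GU_{n/t}(q)\wr\S_t$, or the totally isotropic decomposition of type $\GL_{n/2}(q^2)$ when $n$ is even); a field-extension subgroup in $\Cmc_3$ (of type $\GU_{n/s}(q^s)$ with $s$ an odd prime); and the subfield and form subgroups in $\Cmc_5\cup\Cmc_8$ (of type $\GU_n(q_0)$ with $q=q_0^s$, or $\Sp_n(q)$, or $\SO_n^\e(q)$). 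In every case the two opposing estimates are: a lower bound for $v=|X|/|H_0|$ from~\eqref{eq:v}, and the upper bound $r\leq u_r$ supplied by Lemma~\ref{lem:coprime} (here $r\mid|H_0|_{p'}$, so $r$ is at most the largest cyclotomic factor $\Phi_d(q)$ dividing $|H_0|$), which must be reconciled with $\lambda v<r^2$ from Lemma~\ref{lem:six}(c).

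The decisive case, and the one explaining why unitary groups admit no analogue of Theorem~\ref{thm:main}(b), is the parabolic subcase of $\Cmc_1$. Crucially, $X=\PSU_n(q)$ is \emph{not} among the groups excluded in Lemma~\ref{lem:subdeg}, so the action of $G$ on the cosets of any maximal parabolic subgroup has a nontrivial subdegree equal to a power of $p$. By Lemma~\ref{lem:six}(d) the prime $r$ divides every nontrivial subdegree, forcing $r=p$; but $r\mid|H_0|_{p'}$ by Lemma~\ref{lem:coprime}, so $\gcd(r,p)=1$, a contradiction. This single observation eliminates all maximal parabolics simultaneously, in particular the minimal parabolic $P_1$ (the stabiliser of an isotropic $1$-space), whose linear counterpart produced the surviving family of Theorem~\ref{thm:main}(b); it is precisely the failure of Lemma~\ref{lem:subdeg} for $\PSL_n(q)$, and its validity for $\PSU_n(q)$, that separates the two cases.

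For the remaining (non-parabolic) families I would argue by the uniform estimate above: bound $v$ below by the relevant power of $q$, bound $r\leq u_r$, and deduce $u_r^2<v$, contradicting $\lambda v<r^2$. For the non-degenerate subspace stabilisers of type $\GU_m(q)\times\GU_{n-m}(q)$ one finds $v$ of order $q^{2m(n-m)}$ against $r^2$ of order at most $q^{2(n-m)}$, so the inequality collapses for $m\geq2$ at once, while for $m=1$ the dominant powers of $q$ in $v$ and $r^2$ coincide and one must use the sharper bound that a single prime $r$ divides one cyclotomic factor $\Phi_d(q)$ (and so $r<q^{n-1}$ strictly) rather than the crude $r\leq q^{n-1}+1$. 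I expect the genuine obstacle to be exactly these near-boundary configurations, namely the $m=1$ reducible case and the field-extension and subfield families of small degree, together with the low-dimensional socles $\PSU_3(q),\PSU_4(q),\PSU_5(q),\PSU_6(q)$: here the crude inequalities leave finitely many pairs $(n,q)$, which I would eliminate individually using $r\mid v-1$ from Lemma~\ref{lem:six}(a), the relation $vr=bk$ from Lemma~\ref{lem:six}(b), and the nonexistence of a subgroup of the resulting index $b$ in $G$, verified with \textsf{GAP}~\cite{GAP4} where a direct argument is unavailable.
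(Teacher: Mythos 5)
Your overall strategy coincides with the paper's: Proposition~\ref{prop:class-s} restricts $H$ to the families $\Cmc_1,\Cmc_2,\Cmc_3,\Cmc_5,\Cmc_8$ (the paper reads the surviving types off \cite[Theorem 2.7 and Proposition 4.17]{a:AB-Large-15}), the parabolic subcase is killed exactly as you describe --- $\PSU_n(q)$ is not among the exclusions of Lemma~\ref{lem:subdeg}, so some subdegree is a power of $p$, whence $r=p$ by Lemma~\ref{lem:six}(d), contradicting $r\mid|H\cap X|_{p'}$ from Lemma~\ref{lem:coprime} --- and every remaining family is eliminated by playing a lower bound for $v$ against an upper bound for $r$. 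Your diagnosis of why the unitary case, unlike the linear one, leaves nothing behind is also precisely the paper's mechanism.

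The one step that would fail as written is your treatment of the stabiliser of a non-degenerate $1$-space, which you correctly single out as the near-boundary configuration. There $v=q^{n-1}(q^{n}-(-1)^{n})/(q+1)$, which is approximately $q^{2n-2}-q^{2n-3}$ and in particular is smaller than $q^{2n-2}$; so the bound $r<q^{n-1}$ that you commit to gives only $r^{2}<q^{2n-2}$, and this is entirely compatible with $\lambda v<r^{2}$ (a prime $r$ close to $q^{n-1}$ would satisfy both inequalities). No contradiction follows at that level of precision. What the paper actually uses is that every prime divisor of $|\SU_{n-1}(q)|_{p'}\cdot(q+1)$ divides $q^{i}-(-1)^{i}$ for some $i\leq n-1$, and any such prime is at most $q^{n-2}+1$: when $i$ is odd, $q+1$ divides $q^{i}+1$, so a prime factor is at most $(q^{i}+1)/(q+1)$; when $i$ is even, $q^{i}-1$ splits as $(q^{i/2}-1)(q^{i/2}+1)$. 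With $r\leq q^{n-2}+1$ the inequality $\lambda v<r^{2}$ collapses immediately. Your instinct to replace the crude bound by a cyclotomic one is the right idea, but it must be pushed one full power of $q$ further than you state. Two smaller points: the paper disposes of the residual small cases (e.g.\ $\PSU_3(q)$ with $q\in\{3,4,5,7\}$ arising in the $\Cmc_2$ analysis) by inspecting indices of maximal subgroups rather than by machine computation, and the subgroups of type $\Sp_n(q)$ and $\O_n^{\e}(q)$ are treated as $\Cmc_5$-subgroups in the convention of \cite{b:KL-90} that the paper follows, not as $\Cmc_8$; neither affects the substance of the argument.
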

\begin{proof}
Let $H_{0}=H\cap X$, where $H=G_{\alpha}$ for some point $\alpha$ of $\Dmc$. Then by Lemma \ref{lem:coprime}, Proposition~\ref{prop:class-s}, and \cite[Theorem 2.7 and Proposition 4.17]{a:AB-Large-15}, one of the following holds:
\begin{enumerate}[\rm (1)]
\item $H \in \Cmc_{1}$;
\item $H$ is a $\Cmc_2$-subgroup of type $\GU_{n/t}(q)\wr\S_t$, where $t=2$, or $t=3$ and $q\in\{2,3,4,5,7,9,13,16\}$, or $4\leq n=t\leq 11$ and $q\in\{2,3,4,5\}$;
\item $H$ is a $\Cmc_2$-subgroup of type $\GL_{n/2}(q^2)$;
\item $H$ is a $\Cmc_3$-subgroup of type $\GU_{n/3}(3^3)$ with $n$ odd;
\item $H$ is a $\Cmc_5$-subgroup of type $\GU_{n}(q_{0})$ with $q=q_{0}^3$;
\item $H$ is a $\Cmc_5$-subgroup of type $\Sp_n(q)$ or $\O_{n}^{\e}(q)$ with $\e {\in} \{\circ, -, +\}$.
\end{enumerate}
We analyse each of these possible cases separately and arrive at a contradiction in each case. \smallskip

\noindent\textbf{(1)} Let $H$ be in $\Cmc_{1}$. Then $H$ is reducible and it is either a parabolic subgroup $P_{m}$, or the stabiliser $N_{m}$ of a nonsingular subspace.\smallskip

Assume first that $H_{0}\cong P_{m}$, for some $2m\leq n$. Then since $n\geq 3$, by Lemma~\ref{lem:min-deg}, we have that $v>q^2+q+1$. On the other hand, Lemma~\ref{lem:subdeg} says that there is a subdegree which is a power of $p$. Since $r$ is prime, Lemma~\ref{lem:six}(d) implies that $r=p$, and this is impossible by Lemma~\ref{lem:coprime}.
% It follows from Lemma~\ref{lem:six}(c) that $q^2+q+1<\lambda v< r^2=p^2$, and so $q^{2}+q+1<p^2$, which is a contradiction.\smallskip

Assume now that $H_{0}\cong N_{m}$ with $2m <n$. Then by~\cite[Proposition 4.1.4]{b:KL-90}, we have that
\begin{align*}
H_{0}\cong \,^{\hat{}}\SU_{m}(q){\times}\SU_{n-m}(q){{\cdot}}(q+1).
\end{align*}
Here by~\eqref{eq:v}, we have that $v>q^{m(n-m)}$. Note here that $r$ is prime and $2m<n$. Thus by Lemma~\ref{lem:coprime}, we conclude that $r$ divides $|H_{0}|_{p'}$, and so does $q^{i}-(-1)^{i}$, for some $1\leq i\leq \max\{m,n-m\}=n-m$. Therefore, $r\leq q^{n-m-1}+1$. It follows from Lemma~\ref{lem:six}(c) that $q^{m(n-m)}\leq \lambda v< r^2\leq (q^{n-m-1}+1)^{2}<q^{2n-2m}$. Thus $q^{m(n-m)}<q^{2n-2m}$, and so $m<2$. Thus $m=1$ in which case $v=q^{n-1}(q^{n}{-}{(-1)}^{n})/(q+1)$ and $r\leq q^{n-2}+1$. Therefore Lemma~\ref{lem:six}(c) implies that $q^{n-1}(q^{n}-(-1)^n)/(q+1)\leq \lambda v< r^2\leq (q^{n-2}+1)^2$. Thus $q^{n-1}(q^{n}-(-1)^n)<(q+1)(q^{n-2}+1)^2$ and since $(q+1)(q^{n-2}+1)^2<27q^{2n-3}/8$, we conclude that $q^{n}-(-1)^n<27q^{n-2}/8$, which is a contradiction. \smallskip

\noindent\textbf{(2)} Let $H$ be  a $\Cmc_2$-subgroup of type $\GU_{n/t}(q)\wr\S_t$. Then by~\cite[Proposition 4.2.9]{b:KL-90}, we have that
\begin{align*}
H_{0}\cong\,^{\hat{}}\SU_{n/t}(q)^{t}{{\cdot}}{(q+1)}^{t-1}{{\cdot}}\S_{t}.
\end{align*}
It follows from~\eqref{eq:v} that $v>q^{n^2(t-1)/2t}/(t!)$. Note by Lemma~\ref{lem:coprime} that $r$ divides $t$ or $q^{n/t}-(-1)^{n/t}$.

If $t=2$, then $r$ must divide $q^{n/2}-(-1)^{n/2}$, and so $r\leq q^{n/2}+1$. So Lemma~\ref{lem:six}(c) implies that $q^{n^2/4}<2v<2r^2\leq 2(q^{n/2}+1)^{2}<2q^{n+2}$, then $q^{n^{2}}<2^{4}\cdot q^{4n+8}$, and this is true only for $n=4$. In this case, $v=q^4(q^2+1)(q^2-q+1)/2$ and $r\leq q+1$, and hence $r^{2}<v$, which is contradiction.

If $t=3$, then either $r=3$, or $r$ is a divisor of $q^{n/3}-(-1)^{n/3}$. In the former case, as $v<r^{2}$, we conclude that $q^{n^{2}}<54^{3}$, and since $n$ is a multiple of $3$, we have that $(n,q)=(3,3)$. Then $v=63$, and so $v>r^{2}=9$, which is a contradiction. Assume now that $r$ divides $q^{n/3}-(-1)^{n/3}$. Since $v<r^{2}$, it follows that $q^{n^{2}}<6^{3}(q^{n/3}+1)^{6}$, and this inequality holds only when $n=3$ and $q\in\{3,4,5,7\}$. In these cases, by inspecting the index of maximal subgroups of $X=\PSU_{3}(q)$ for $q\in \{3,4,5,7\}$, we observe that $v<r^{2}$ does not hold.

If $4\leq n=t\leq 11$ and $q\in\{2,3,4,5\}$, then $r$ divides $n!$ or $q+1$. Therefore, as $v<r^{2}$, we conclude that $q^{n(n-1)}<(n!)^{2}\cdot n^{4}$ or $q^{n(n-1)}<(n!)^{2}\cdot (q+1)^{4} $. These inequalities hold, only when $(n,q)=(4,2)$ and $(5,2)$ for which $v=40$ and $v=1408$, respectively. But here $r=3$ or $5$, respectively, and hence $v>r^{2}$, which is a contradiction.\smallskip

\noindent\textbf{(3)} Let $H$ be  a $\Cmc_2$-subgroup of type $\GL_{n/2}(q^2)$. Then by~\cite[Proposition 4.2.4]{b:KL-90}, we have that $H_{0}{\cong}\,^{\hat{}}(q-1)\cdot \SL_{n/2}(q^2){{\cdot}}2$ with $n$ even. Here by~\eqref{eq:v}, we observe that $v>q^{n^2/4}$. As $r$ divides $|H_{0}|_{p'}$ by Lemma~\ref{lem:coprime}, we have that $r\leq q^{n/2}+1$. Then Lemma~\ref{lem:six}(d) implies that $q^{n^2/4}<\lambda v<r^2\leq (q^{n/2}+1)^2<q^{n+2}$, and so $n^2<4n+8$. This inequality holds only for $n=4$ in which case $v=q^4(q^3+1)(q+1)/2$ and $r\leq q^2+1$,  which implies that $v>r^2$, a contradiction.\smallskip

\noindent\textbf{(4)} Let $H$ be  a $\Cmc_3$-subgroup of type $\GU_{n/3}(3^3)$ with $n$ odd. Then by~\cite[Proposition 4.3.6]{b:KL-90}, we have that $H_{0}{\cong}\,^{\hat{}}\SU_{n/3}(3^3){{\cdot}}21$, but then the inequality $|X|<|H_{0}|{{\cdot}}|H_{0}|_{3'}^2$ does not hold, which is a contradiction.\smallskip

\noindent\textbf{(5)} Let $H$ be  a $\Cmc_5$-subgroup of type $\GU_{n}(q_{0})$ with $q=q_{0}^3$. We observe by~\cite[Propositions 4.5.3]{b:KL-90}, \cite[Lemma 4.2 and Corollary 4.3]{a:AB-Large-15} and \eqref{eq:v} that $v>q_{0}^{2n^2-9}$. We moreover have by Lemma~\ref{lem:coprime} that $r$ divides $|H_{0}|_{p'}$, and so does $q_{0}^{i}-(-1)^{i}$, for some $1\leq  i\leq  n$. Therefore, $r\leq q_{0}^{n}+1$, and so Lemma~\ref{lem:six}(c) implies that $q_{0}^{2n^2-9}\leq\lambda v< r^2\leq(q_{0}^{n}+1)^2<q_{0}^{2n+2}$. Thus $q_{0}^{2n^2-9}<q_{0}^{2n+2}$, and so $2n^2<2n+11$, which is impossible.\smallskip

\noindent\textbf{(6)} Let $H$ be  a $\Cmc_5$-subgroup of type $\Sp_n(q)$ or $\O_{n}^{\e}(q)$. Then by~\cite[Propositions 4.5.5 and 4.5.6]{b:KL-90}, $H_{0}$ isomorphic to one of the following:
\begin{enumerate}[{ \quad (i)}]
\item $H_{0}{\cong}\PSO_{n}^{\circ}(q)$;
\item $H_{0}{\cong}\PSO_{n}^{\e}(q){{\cdot}}2$ with $q$ odd and $\e=\pm$;
\item $H_{0}{\cong}\,^{\hat{}}\Sp_{n}(q){{\cdot}}\gcd(\frac{n}{2}, q+1)$ with  $n$ even.
\end{enumerate}
Assume first that $H_{0}{\cong}\PSO_{n}^{\circ}(q)$ with $n$ odd. Then \cite[Lemma 4.2 and Corollary 4.3]{a:AB-Large-15} and \eqref{eq:v} imply that $v>q^{(n^{2}+n-6)/2}$. Note by Lemma~\ref{lem:coprime} that $r\leq q^{(n-1)/2}+1$, and so Lemma~\ref{lem:six}(c) implies that $q^{(n^{2}+n-6)/2}<(q^{(n-1)/2}+1)^{2}$. This inequality does not hold for $n=3$, and if $n\geq 5$, it follows that $q^{(n^{2}+n-6)/2}<q^{n+1}$, and so $n^{2}<n+8$, which is impossible.\smallskip

Assume now that $H_{0}{\cong}\PSO_{n}^{\e}(q){{\cdot}}2$ with $q$ odd and $\e=\pm$.  Then \cite[Lemma 4.2 and Corollary 4.3]{a:AB-Large-15} and \eqref{eq:v} imply that $v>q^{(n^{2}+n-6)/2}$ and by Lemma~\ref{lem:coprime}, we have that $r\leq q^{n/2}+1$. Again, applying Lemma~\ref{lem:six}(c), we conclude that $n^{2}<n+10$, which is impossible.

Assume finally that $H_{0}{\cong}\,^{\hat{}}\Sp_{n}(q){{\cdot}}\gcd(\frac{n}{2}, q+1)$ with $n$ even. By the same argument as above, $v> q^{(n^{2}-n-6)/2}$ and $r\leq q^{n/2}+1$, and so $n^{2}<3n+10$, which is true for $n=4$. In this case, $v= q^2(q^3+1)/\gcd(2, q+1)$ and $r\leq q^2+1$. By Lemma ~\ref{lem:six}(c), we have that $q^2(q^3+1)= \gcd(2, q+1)\cdot v<\gcd(2, q+1)\cdot r^2\leq \gcd(2, q+1)\cdot(q+1)^2$ implying that $q^2(q^3+1)<\gcd(2, q+1)\cdot(q^2+1)^2$. If $q$ is even, then $q^2(q^3+1)<(q^2+1)^2$, and if $q$ is odd, then $q^2(q^3+1)<2(q^2+1)^2$. But both inequalities do not hold.
\end{proof}

\begin{proposition}\label{prop:psp}
Let $\Dmc$ be a nontrivial $2$-design with prime replication number $r$. Suppose that $G$ is an automorphism group of $\Dmc$ of almost simple type with socle $X$. If $G$ is flag-transitive, then the socle $X$ cannot $\PSp_{n}(q)'$ with $n\geq 4$ even.
\end{proposition}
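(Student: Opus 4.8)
The plan is to mirror the strategy of Propositions~\ref{prop:psl} and~\ref{prop:unitary}. Set $H_{0}=H\cap X$ with $X=\PSp_{n}(q)$ and $n=2m\geq 4$; we may assume $(n,q)\neq(4,2)$, since $\PSp_{4}(2)'\cong\A_{6}$ falls under Proposition~\ref{prop:alt-spor}. By Proposition~\ref{prop:class-s} the subgroup $H$ is maximal and large and lies in one of $\Cmc_{1},\Cmc_{2},\Cmc_{3},\Cmc_{5},\Cmc_{8}$, and reading the structure of $H_{0}$ from \cite{b:KL-90} and \cite[Theorem 7]{a:AB-Large-15} leaves: a parabolic $P_{m}$ or a nondegenerate stabiliser $N_{2i}$ ($\Cmc_{1}$); an imprimitive $\Sp_{n/t}(q)\wr\S_{t}$ or $\GL_{m}(q){\cdot}2$ ($\Cmc_{2}$); a field-extension $\Sp_{n/t}(q^{t}){\cdot}t$ or $\GU_{m}(q){\cdot}2$ ($\Cmc_{3}$); a subfield $\Sp_{n}(q_{0}){\cdot}[\,\cdot\,]$ with $q=q_{0}^{t}$ ($\Cmc_{5}$); and, only for $q$ even, an orthogonal $\O_{n}^{\e}(q)$ ($\Cmc_{8}$).

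I would handle the parabolic case first: Lemma~\ref{lem:subdeg} applies (symplectic groups are not exceptions there), so $G$ has a subdegree that is a power of $p$, whence $r=p$ by Lemma~\ref{lem:six}(d), contradicting Lemma~\ref{lem:coprime}. Two tools then dispatch almost all remaining subgroups. The first is the crude inequality $v<r^{2}$ from Lemma~\ref{lem:six}(c): computing $v$ from \eqref{eq:v} and bounding $r\leq q^{s}+1$ from Lemma~\ref{lem:coprime} (with $s$ the relevant cyclotomic index), the index $v$ is far too large for $N_{2i}$, the imprimitive subgroups, $\GU_{2}(q){\cdot}2$, $\GL_{m}(q){\cdot}2$ and $\O_{4}^{+}(q)$, exactly as in Proposition~\ref{prop:unitary}. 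The second is Lemma~\ref{lem:divisible}: when $H_{0}$ contains a quasi-simple normal subgroup $K$ of Lie type in characteristic $p$ with $p\nmid|Z(K)|$, then $r\neq p$ forces $K_{B}$ into a parabolic $P$ with $|K{:}P|\mid r$, so $r=|K{:}P|$ since $r$ is prime. For the subfield subgroups, for $\Sp_{n/t}(q^{t}){\cdot}t$ with $n/t\geq 4$, and for $\GU_{m}(q){\cdot}2$ and $\O_{2m}^{\e}(q)$ with $m\geq 3$, the minimal parabolic index factors nontrivially, e.g.\ $(Q^{2s}-1)/(Q-1)=\bigl((Q^{s}-1)/(Q-1)\bigr)(Q^{s}+1)$ for $\Sp_{2s}(Q)$ with $s\geq 2$, hence is composite and cannot equal the prime $r$.

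This reduces everything to rank-one $K$ in dimension $n=4$: either $H_{0}=\Sp_{2}(q^{2}){\cdot}2$ ($\Cmc_{3}$) or, for $q$ even, $H_{0}=\O_{4}^{-}(q)$ ($\Cmc_{8}$), where the relevant $K$ (a copy of $\PSL_{2}(q^{2})$, coming from $\Sp_{2}(q^{2})$ or $\Omega_{4}^{-}(q)$) has Borel index $q^{2}+1$. Thus Lemma~\ref{lem:divisible} gives $r=q^{2}+1$ and \eqref{eq:v} gives $v=q^{2}(q^{2}-1)/2$. If $q$ is odd then $q^{2}+1$ is even and larger than $2$, so it is not prime and this case dies at once. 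The essential obstacle is $q$ even, where $q^{2}+1$ may be a Fermat prime: then $r=q^{2}+1$ and $v=q^{2}(q^{2}-1)/2$ satisfy $\lambda v<r^{2}$, and these are precisely the parameters of the Witt--Bose--Shrikhande space $\W(q^{2})$ of Theorem~\ref{thm:main}(a), whose flag-transitive group has socle $\PSL_{2}(q^{2})$, not $\PSp_{4}(q)$.

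To exclude $\PSp_{4}(q)$ itself I would return to Lemma~\ref{lem:six}(d): since $q$ is even, $q^{2}+1$ is coprime to each of $q$, $q-1$, $q+1$ and $q^{2}-1$, so it suffices to produce a single nontrivial subdegree of $X=\Sp_{4}(q)$ on the cosets of $\Sp_{2}(q^{2}){\cdot}2$ (equivalently $\O_{4}^{-}(q)$) whose length is a product of such factors; any such subdegree is coprime to $r=q^{2}+1$ and contradicts Lemma~\ref{lem:six}(d). Using the exceptional isomorphism $\Sp_{4}(q)\cong\Omega_{5}(q)$ to realise the action on nondegenerate minus-type hyperplanes of the $\O_{5}(q)$ geometry, the relative-position suborbits have lengths polynomial in $q$ with factors only among $q,q\pm 1,q^{2}-1$, which furnishes the required subdegree; the finitely many Fermat-prime values $q\in\{4,16,256,\dots\}$ can also be settled directly in \textsf{GAP}. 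The whole difficulty, and the reason $\PSp_{4}(q)$ demands separate care, is exactly this borderline coincidence with $\W(q^{2})$: the numerical inequalities are powerless and the exclusion rests on the fine subdegree structure of the symplectic action.
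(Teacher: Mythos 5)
Your reduction to the case $X=\PSp_{4}(q)$, $q=2^{a}$, $H_{0}=\Sp_{2}(q^{2}){\cdot}2\cong\O_{4}^{-}(q)$, $v=q^{2}(q^{2}-1)/2$ and $r=q^{2}+1$ a Fermat prime follows the paper closely and is essentially sound, but your proposed exclusion of this final case does not work. You want to produce a nontrivial subdegree of $\Sp_{4}(q)$ on the cosets of $\O_{4}^{-}(q)$ that is coprime to $q^{2}+1$ and then invoke Lemma~\ref{lem:six}(d). No such subdegree exists: by \cite[p.~329]{a:Saxl2002} (quoted in the paper) the nontrivial subdegrees for $q$ even are $(q^{2}+1)(q-1)$ and $q(q^{2}+1)$, every one of them divisible by $q^{2}+1$. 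This is consistent with $v-1=(q^{2}+1)(q^{2}-2)/2$ being divisible by $q^{2}+1$, and it is exactly why the numerical and subdegree tools are powerless here, as you yourself observe one sentence earlier. Your fallback of checking ``the finitely many Fermat-prime values of $q$'' in \textsf{GAP} is also not available: it is an open problem whether there are finitely many Fermat primes, so no finite computation can close the case.

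The paper's actual argument for this case is of a different character and is the reason $\PSp_{4}(q)$ ``needs more attention''. From $k\leq r$ and Lemma~\ref{lem:six}(a) one gets $\lambda\in\{1,2\}$; the case $\lambda=1$ is a linear space and is excluded by \cite{a:Saxl2002}; for $\lambda=2$ one computes $k=q^{2}-1$, $b=q^{2}(q^{2}+1)/2$, identifies $K_{B}$ with a Borel subgroup of $K=\PSL_{2}(q^{2})$ via Lemma~\ref{lem:divisible}, pins down $X_{B}$ as a maximal subgroup isomorphic to $\Sp_{2}(q)^{2}{:}2$ or $\SO_{4}^{+}(q)$ using \cite[Table 8.14]{b:BHR-Max-Low}, and then derives a contradiction from Wielandt's theorem applied to a Sylow $r$-subgroup together with the maximal factorizations of $\Sp_{4}(q)$ in \cite[Table 1]{b:LPS-Max90}. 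Some replacement of this calibre (or an explicit argument that no flag-transitive $2$-design with these parameters and this group exists) is needed; your sketch as written has a genuine gap at precisely the step the whole proposition turns on. The remaining portions of your outline (parabolics via Lemma~\ref{lem:subdeg}, the crude bound $v<r^{2}$ for the $\Cmc_{1}$, $\Cmc_{2}$ and subfield cases, and Lemma~\ref{lem:divisible} for the unitary-type $\Cmc_{3}$ subgroups) are in line with the paper and are fine.
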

\begin{proof}
Let $H_{0}=H\cap X$, where $H=G_{\alpha}$ for some point $\alpha$ of $\Dmc$. If $q=2$, then $X=\PSp_{4}(2)'\cong \A_{6}$ and this case has been treated in Proposition~\ref{prop:alt-spor}. If $q=4$, than $X=\PSp_{4}(4)'$, and so the possibilities for $H_{0}$ can be read off from \cite[p. 44]{b:Atlas}, and so it is easily to check the inequality $v<r^{2}$, and conclude that the only possible case is $H_{0}\cong \PSL_{2}(16){:}2$  for $r=17$. Since $k$ is a divisor of $vr=17\cdot 120$ and $k\leq r$, we see that $k\in\{2, 3, 4, 5, 6, 8, 10, 12, 15, 17\}$. Since also $\lambda(v-1)=r(k-1)$, we have that $(v,b,r,k,\lambda)$ is either $(120, 255, 17, 8, 1 )$, or $(120, 136, 17, 15, 2)$. The former case does not occur due to \cite{a:Saxl2002}. The latter case can be ruled out by GAP \cite{GAP4}. Therefore, in what follows, we can assume that $(n,q)\neq (4,2)$ and $(4,4)$. By Lemma \ref{lem:coprime}, Proposition~\ref{prop:class-s}, and \cite[Theorem 2.7 and Proposition 4.22]{a:AB-Large-15}, one of the following holds:
\begin{enumerate}[\rm (1)]
\item $H \in \Cmc_{1}\cup\Cmc_{8}$;
\item $H$ is a $\Cmc_2$-subgroup of type $\Sp_{n/t}(q)\wr \S_t$ with $t\in \{2, 3, 4, 5\}$;
\item $H$ is a $\Cmc_2$-subgroup of type $\GL_{n/2}(q)$;
\item $H$ is a $\Cmc_3$-subgroup of type $\Sp_{n/t}(q^t)$ with $t=2, 3$ or $\GU_{n/2}(q)$;
\item $H$ is a $\Cmc_5$-subgroup of type $\Sp_{n}(q_{0})$ with $q=q_{0}^2$.
\end{enumerate}
We now analyse each of these possible cases separately. \smallskip

\noindent\textbf{(1)} Let $H$ be in $\Cmc_{1}$. Then $H$ is parabolic or stabilises a nonsingular subspace of $V$.\smallskip

Assume first that $H{\cong}P_{m}$, the stabiliser of a totally singular $m$-subspace of $V$, with $2m\leq n$ and $m$ even. %Then by~\cite[Proposition 4.1.19]{b:KL-90}, we have that
%\begin{align*}
%H_{0}{\cong}\,^{\hat{}}[q^{m/2-3m^2/2+mn}]{:}\GL_{m}(q){\times}\Sp_{n-2m}(q)
%\end{align*}
Since $n\geq 4$, by Lemma~\ref{lem:min-deg}, we have that $v>q^2+q+1$. By Lemma~\ref{lem:subdeg}, there is a subdegree which is a power of $p$, and since $r$ is prime, Lemma~\ref{lem:six}(d) implies that $r=p$, which is a contradiction by Lemma~\ref{lem:coprime}.
%We now apply Lemma~\ref{lem:six}(c) and conclude that $q^2+q+1<v<r^2=p^2$, that is to say, $q^{2}+q+1<p^2$, which is a contradiction.\smallskip

Assume now that $H_{0}{\cong}N_{m}$, the stabiliser of a nonsingular $m$-subspace $U$ of $\V$ with $m<n$ and $m$ even. Then by~\cite[Proposition 4.1.3]{b:KL-90}, we have that
\begin{align*}
H_{0}{\cong}\,^{\hat{}}\Sp_{m}(q){\times}\Sp_{n-m}(q).
\end{align*}

It follows from~\eqref{eq:v} that $v>q^{m(n-m)}$. By Lemma~\ref{lem:coprime}, $r$ divides $|H_{0}|_{p'}$, and so does $q^{2i}-1$, for some $1\leq  i\leq  \max\{\frac{m}{2}, \frac{n-m}{2}\}{=}\frac{n-m}{2}$.  Therefore, $r\leq q^{(n-m)/2}+1$. We now apply Lemma~\ref{lem:six}(c) and conclude that $q^{m(n-m)}\leq \lambda v<r^2\leq (q^{(n-m)/2}+1)^2<q^{n-m+2}$. Thus $m(n-m)<n-m+2$, which is impossible.\smallskip

Let now $H$ be in $\Cmc_{8}$. Then by~\cite[Proposition 4.8.6]{b:KL-90}, we have that $H_{0}\cong\O_{n}^{\e}(q)$ with $q$ even. In this case,  $v=q^{m}(q^{m}+\e)/2$, where $n=2m$. It follows from proof of \cite[Proposition 1]{a:LPS2} that $X$ has the subdegrees $(q^m-\e1)(q^{m-1}+\e1)$ and $q^{m-1}(q^{m}-\e1)$. By Lemma~\ref{lem:six}(d), the parameter $r$ divides $c(q^m-\e1)$, where $c=\gcd(q-2, q^{m-1}+\e1 )$, and hence Lemma \ref{lem:divisible} implies that $r$ is divisible by the index of a parabolic subgroup in $\O_{n}^{\e}(q)$ and this is clearly not possible.\smallskip

\noindent\textbf{(2)} Let $H$ be  a $\Cmc_2$-subgroup of type $\Sp_{n/t}(q)\wr\S_t$ with $t\in \{2, 3, 4, 5\}$. Then by~\cite[Proposition 4.2.10]{b:KL-90}, we have that $H_{0}\cong \,^{\hat{}}\Sp_{n/t}(q)\wr \S_t$.

It follows from~\eqref{eq:v} that $v>q^{m^2t(t-1)/2}/(t!)$ with $n=mt$ and $m$ even. Lemma~\ref{lem:coprime} implies that $r\leq t(q^{m/2}+1)$, and so by Lemma~\ref{lem:six}(d), we conclude that $q^{m^2t(t-1)/2}\leq (t!)\cdot v< (t!)\cdot r^2\leq t^2\cdot (t!)\cdot (q^{m/2}+1)^2$. Therefore $q^{m^2t(t-1)/2}< t^2\cdot (t!)\cdot (q^{m/2}+1)^2$. If $(m,t)=(2,2)$, then we have that $q=2$ or $3$, and so $X$ is isomorphic to $\PSp_{4}(2)'\cong \A_{6}$ or $\PSp_{4}(3)\cong \PSU_{4}(2)$, which are not the case by Propositions~\ref{prop:alt-spor} and~\ref{prop:unitary}. If $(m,t)\neq (2,2)$, then $q^{m^2t(t-1)/2}< t^2\cdot (t!)\cdot (q^{m/2}+1)^2<t^2\cdot (t!)\cdot q^{m+2}$, and hence $q^{m^2t(t-1)-2m-4}<t^4\cdot (t!)^{2}$, where $t\in\{2,3,4,5\}$. This inequality does not hold if $(m, t)\neq (2,2)$, which is a contradiction.\smallskip

\noindent\textbf{(3)} Let $H$ be  a $\Cmc_2$-subgroup of type $\GL_{n/2}(q)$. Then by~\cite[Proposition 4.2.5]{b:KL-90}, we have that $H_{0}\cong\,^{\hat{}}\GL_{m}(q){\cdot} 2$ with $n=2m$.

Here by \ref{eq:v}, we have that $v>q^{m(m+1)}/2$, and by Lemma~\ref{lem:coprime}, we conclude that $r\leq q^{m}-1$, and so Lemma~\ref{lem:six}(d) implies that $q^{m(m+1)}\leq 2v< 2r^2\leq 2(q^{m}-1)^2<2q^{2m}$. Thus $m^{2}<m+1$, which is impossible.\smallskip

\noindent\textbf{(4)} Let $H$ be  a $\Cmc_3$-subgroup of type $\Sp_{n/2}(q^2)$, $\Sp_{n/3}(q^3)$ or $\GU_{n/2}(q)$. Then by~\cite[Proposition 4.3.7 and 4.3.10]{b:KL-90}, $H_{0}$ isomorphic to one of the following subgroups:
\begin{enumerate}[(i)]
\item $^{\hat{}}\GU_{m}(q){\cdot} 2$ with $m=n/2$ and $q$ odd;
\item $\PSp_{m}(q^t){\cdot} t$ with $m=n/t$ even and $t=2, 3$.
\end{enumerate}

Assume first that $H_{0}\cong \,^{\hat{}}\GU_{m}(q){\cdot} 2$ with $m=n/2$ and $q$ odd. Note that $r$ is odd prime. Then by Lemma \ref{lem:coprime}, we note that $r$ is coprime to $p$, and so by Lemma~\ref{lem:divisible}, the stabiliser of a block under $H_{0}$ is contained in a parabolic subgroup of $\GU_{m}(q)$, and since the indices of parabolic subgroups of unitary groups of odd characteristic are even, it follows that $r$ is even but here $v-1$ is odd, which is a contradiction.\smallskip

Assume now that $H_{0}\cong\PSp_{m}(q^t){\cdot} t$ with $m=n/t$ even and $t=2, 3$. By \cite[Lemma 4.2 and Corollary 4.3]{a:AB-Large-15} and \eqref{eq:v}, we have that $v>q^{m^{2}t(t-1)/2}/4t$, where $t=2, 3$. Note by Lemma~\ref{lem:six}(d) that $r\neq 2,3$ unless $(m,t,q)=(2,2,2)$ in which case $X=\PSp_{4}(2)'\cong \A_{6}$ and this case has been treated in Proposition~\ref{prop:alt-spor}. Then Lemma~\ref{lem:coprime} implies that $r\leq q^{mt/2}+1$. We again apply Lemma~\ref{lem:six}(d) and conclude that $q^{m^{2}t(t-1)/2}\leq 4t v< 4tr^2\leq 4t(q^{mt/2}+1)^2<4tq^{mt+2}$, and so $q^{m^2t(t-1)}<16t^{2}q^{2mt+4}$, where $t=2,3$. This inequality holds only for $(m, t)=(2, 2)$. In this case, $X=\PSp_{4}(q)'$ and $H_{0}=\PSp_{2}(q^{2}){\cdot}2$ with $v=q^2(q^2-1)/2$ and $q\neq 2,4$. According to \cite[p. 329]{a:Saxl2002}, the nontrivial subdegrees are
\begin{align*}
(q^2+1)(q-1), q(q^2+1)/2 \text{ and }q(q^2+1)& \text{ if } q \text{ is odd, and} \\
(q^2+1)(q-1) \text{ and }  q(q^2+1) \text{ if } q \text{ is even.}
\end{align*}
Since $r$  is a prime divisor of $q^2+1$ and $r^2 > v=q^2(q^2-1)/2$, it follows that $2(q^2+1)^{2}>s^{2}q^2(q^2-1)$, where $rs=q^{2}+1$, for some positive integer $s$, but this is true only when $s=1$. Hence $r=q^{2}+1$ is a Fermat prime and $q=2^{a}$. We now apply Lemma \ref{lem:six}(a) and conclude that $k=1+\lambda(q^2-2)/2$, and since $k\leq r$, we must have $\lambda(q^2-2)\leq 2q^2$. Excluding the case where $X=\PSp_{4}(2)'\cong \A_{6}$, this inequality implies that $\lambda=1,2$. By \cite{a:Saxl2002}, we can assume that $\lambda= 2$. In this case by Lemma \ref{lem:six}(a)-(b), we have that $k=q^2-1$ and $b=q^2(q^2+1)/2$.

Let $K=\PSp_{2}(q^{2})\cong \PSL_{2}(q^{2})$. Then by Lemma \ref{lem:divisible}, $K_{B}$ is contained in a parabolic subgroup $P$ of index $q^{2}+1$ in $K$ and $|K:P|=r=q^{2}+1$. Therefore, $P$ is a subgroup of $H$ of index $2(q^{2}+1)$. Since $|H:H_{B}|=r=q^{2}+1$ is prime and $|H:K|=2$, $K$ does not contain $H_{B}$. Therefore, $H=KH_{B}$, and so $q^{2}+1=|H:H_{B}|=|K:K_{B}|=|K:P|\cdot |P:K_{B}|=(q^{2}+1)\cdot |P:K_{B}|$. This implies that $K_{B}=P\cong q^{2}:(q^{2}-1)$. Since $b=q^{2}(q^{2}+1)/2$, we conclude that $|G:G_{B}|=q^{2}(q^{2}+1)/2$, and so $|X:X_{B}|=q^{2}(q^{2}+1)/(2c)$ for some divisor $c$ of $|\Out(X)|=2a=2^{t+1}$. By inspecting maximal subgroups of $X\cong \PSp_{4}(q)$ recorded in \cite[Table 8.14]{b:BHR-Max-Low}, we observe that the only maximal subgroups of $X$ whose indices divide $|X:X_{B}|=q^{2}(q^{2}+1)/(2c)$ are $\Sp_{2}(q)^{2}:2$ and $\SO_{4}^{+}(q)$, both of index $q^{2}(q^{2}+1)/2$, and this implies that $c=1$ and $X_{B}$ is maximal in $X$ and is isomorphic to $\Sp_{2}(q)^{2}:2$ or $\SO_{4}^{+}(q)$. Let now $R$ be a Sylow $r$-subgroup of $X$, where $r=q^{2}+1$ is a Fermat prime. We now apply \cite[Theorem 1.3.4]{b:=Wielandt64} and observe that $q^{2}+1$ divides $|R:R_{B}|$, and so $R$ is semiregular, and hence $C:=C_{X}(R)$ is transitive on the block set $\Bmc$.  Therefore, $X=X_{B}C$. Suppose that $M$ is a maximal subgroup of $X$ containing $C$. Then $X=X_{B}M$, and since $X_{B}$ is maximal in $X$, the possible subgroups $X_{B}$ and $M$ can be read off \cite[Table 1]{b:LPS-Max90}, and so excluding the cases where $q=2,4$ and the fact that $X_{B}$ is $\Sp_{2}(q)^{2}:2$ or $\SO_{4}^{+}(q)$, we have that $(X_{B},M)$ is $(\SO^{+}_{4}(q),\PSp_{2}(q^{2}){\cdot}2)$, $(\PSp_{2}(q)\wr 2, \SO^{-}_{4}(q))$ or $(\SO^{+}_{4}(q),^{2}{\!}\B_{2}(q))$. The latter case can be ruled out as $^{2}{\!}\B_{2}(q)$ has no subgroups of order $q^{2}+1$.  If $M=\PSp_{2}(q^{2}){\cdot}2=H_{0}$, then since by \cite[p. 364]{b:BHR-Max-Low}, the subgroup $R$ is self-centralising in $H_{0}$, it follows that $C_{B}=C\cap X_{B}=R\cap X_{B}=R_{B}=1$, and hence $q^{2}(q^{2}-1)^{2}=|X:R|=|X:C|=|X_{B}:C_{B}|=|X_{B}|=2q^{2}(q^{2}-1)^{2}$, which is a contradiction. If $M=\SO^{-}_{4}(q)\cong \PSL_{2}(q^{2}){:}2$, then by inspecting the maximal subgroups of $\PSL_{2}(q^{2})$ and the fact that $2(q^{2}+1)$ is a divisor of the size of the normalizer $N$ of $R$ in $X$, we conclude that $N=\D_{2(q^{2}+1)}{:}2$. Note also by \cite[p. 364]{b:BHR-Max-Low} that $N_{H_{0}}(R)=(q^{2}+1):4$. Then $N=N_{H_{0}}(R)$, this again implies that $C=C_{\alpha}=R$, and so $|C_{B}|=1$, which has been already shown that is not the case. \smallskip

\noindent\textbf{(5)} Let $H$ be  a $\Cmc_5$-subgroup of type $\Sp_{n}(q_{0})$ with $q=q_{0}^2$. In this case by~\cite[Proposition 4.5.4]{b:KL-90}, we have that $H_{0}\cong \PSp_{n}(q_{0}){\cdot} c$ with $q=q_{0}^2$ and $c\leq 2$, (with $c=2$ if and only if $q$ is odd).

It follows from~\eqref{eq:v} that $v>q_{0}^{n(n+1)/2}/2$, and so Lemma~\ref{lem:coprime} implies that $r\leq q_{0}^{n/2}+1$. Thus by  Lemma~\ref{lem:six}(d), we have that $q_{0}^{n(n+1)}<4q_{0}^{2n+4}$, and so $n^2<n+6$, which is impossible.
\end{proof}

\begin{table}
\scriptsize
\caption{Some large maximal $\Cmc_{2}$-subgroups of some orthogonal groups.}\label{tbl:po-c2}
\resizebox{\textwidth}{!}{
\begin{tabular}{clllll}
\hline\noalign{\smallskip}
\multicolumn{1}{c}{Class} &
\multicolumn{1}{l}{$X$} &
\multicolumn{1}{l}{$H\cap X$} &
\multicolumn{1}{l}{$l_{v}$} &
\multicolumn{1}{c}{$u_{r}$}&
\multicolumn{1}{c}{Condition}
\\
\noalign{\smallskip}\hline\noalign{\smallskip}
$\Cmc_{2}$ &
$\POm_{n}^{+}(q)$ &
$\Omega_{n/2}^{\pm}(q)^2{\cdot} 2^{f}$ &
$q^{(n^2-24)/4}$&
$q^{(n+4)/4}$&
$f=2, 3$\\
$\Cmc_{2}$ &
$\POm_{n}^{+}(q)$ &
$\Omega_{n/2}(q)^2{\cdot}4$ &
$q^{(n^2-20)/4}$&
$q^{(n+2)/4}$&
$\frac{n}{2}q$ is odd\\
$\Cmc_{2}$ &
$\POm_{n}^{-}(q)$ &
$\Omega_{n/2}(q)^2{\cdot} 4$ &
$q^{(n^2-20)/4}$&
$q^{(n+2)/4}$&
$\frac{n}{2}q$ is odd\\
$\Cmc_{2}$ &
$\POm^{+}_{n}(q)$ &
$\GL_{n/2}(q)$  &
$q^{(n^2-2n)/4}/2$&
$q^{n/2}-1$&
\\
$\Cmc_{5}$ &
$\POm_{n}(q)$ &
$\Omega_{n}(q_{0}){{\cdot}}2$  &
$q_{0}^{n(n-1)/2}/4$&
$q_{0}^{n/2}+1$&
$n$ is odd and $q=q_{0}^{2}$\\
$\Cmc_{5}$ &
$\POm^{+}_{n}(q)$ &
$\POm_{n}^{\e'}(q_{0}){{\cdot}}2^{c}$  &
$q_{0}^{n(n-1)/2}/4$&
$q_{0}^{n/2}+1$&
$\e'=\pm$, $c\leq 2$ and $q=q_{0}^2$\\
$\Cmc_{2}$ &
$\POm_{8}^{+}(2)$ &
$\Omega_{2}^{-}(2)^2{\cdot} 2^{4}$  &
$2^{8}{\cdot}3^3{\cdot}5^2{\cdot}7$&
$3$&
\\
$\Cmc_{2}$ &
$\POm_{10}^{-}(2)$ &
$\Omega_{2}^{-}(2)^5{\cdot} 2^{5}$  &
$2^{15}{\cdot}3{\cdot}5^2{\cdot}7{\cdot}11{\cdot}17$&
$3$&
\\
$\Cmc_{2}$ &
$\POm_{12}^{-}(2)$ &
$\Omega_{4}^{-}(2)^2{\cdot} 2^{3}$  &
$2^{23}{\cdot}3^{4}{\cdot}5{\cdot}7{\cdot}11{\cdot}13{\cdot}17{\cdot}31$&
$5$&
\\
$\Cmc_{2}$ &
$\POm_{7}(3)$ &
$2^{6}{\cdot}\A_{7}$  &
$3^7{\cdot}13$&
$7$&
\\
$\Cmc_{2}$ &
$\POm_{7}(5)$ &
$2^{6}{\cdot}\A_{7}$  &
$3^2{\cdot}5^{8}{\cdot}13{\cdot}31$&
$7$ &
\\
$\Cmc_{2}$ &
$\POm_{8}^{+}(3)$ &
$2^{6}{\cdot}\A_{8}$  &
$3^{10}{\cdot}5{\cdot}13$&
$7$ &
\\
$\Cmc_{2}$ &
$\POm_{9}(3)$ &
$2^{7}{\cdot}\A_{9}$  &
$2{\cdot}3^{12}{\cdot}5{\cdot}13{\cdot}41$&
$7$ &
\\
$\Cmc_{2}$ &
$\POm_{10}^{-}(3)$ &
$2^{8}{\cdot}\A_{10}$  &
$3^{16}{\cdot}13{\cdot}41{\cdot}61$&
$7$ &
\\
$\Cmc_{2}$ &
$\POm_{11}(3)$ &
$2^{9}{\cdot}\A_{11}$  &
$2{\cdot}3^{21}{\cdot}11{\cdot}13{\cdot}41{\cdot}61$&
$11$&
\\
$\Cmc_{2}$ &
$\POm_{12}^{+}(3)$ &
$2^{10}{\cdot}\A_{12}$  &
$2^{25}{\cdot}7{\cdot}11{\cdot}13^2{\cdot}41{\cdot}61$&
$11$&
\\
$\Cmc_{2}$ &
$\POm_{13}(3)$ &
$2^{11}{\cdot}\A_{13}$  &
$2{\cdot}3^{31}{\cdot}5{\cdot}7{\cdot}11{\cdot}13{\cdot}41{\cdot}61{\cdot}73$&
$13$&
\\
\hline
\end{tabular}
}
\end{table}

\begin{proposition}\label{prop:orth}
Let $\Dmc$ be a nontrivial $2$-design with prime replication number $r$. Suppose that $G$ is an automorphism group of $\Dmc$ of almost simple type with socle $X$. If $G$ is flag-transitive, then the socle $X$ cannot be $\POm_{n}^{\e}(q)$ with $\e \in \{\circ, -, +\}$.
\end{proposition}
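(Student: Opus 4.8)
The plan is to follow the template of Proposition~\ref{prop:psp}. Set $H_0=H\cap X$ with $H=G_\alpha$ maximal in $G$. By Lemma~\ref{lem:coprime}, Proposition~\ref{prop:class-s} and the large maximal subgroup tables of \cite[Theorem 2.7]{a:AB-Large-15}, the subgroup $H$ lies in one of $\Cmc_1$, $\Cmc_2$, $\Cmc_3$ or $\Cmc_5$: the class $\Cmc_8$ is vacuous for orthogonal groups, and every candidate in $\Cmc_4\cup\Cmc_6\cup\Smc$ with orthogonal socle has already been discarded in Proposition~\ref{prop:class-s}. This produces an explicit finite menu of structures for $H_0$: in $\Cmc_1$, a parabolic $P_m$ or the stabiliser $N_m$ of a nondegenerate $m$-subspace; in $\Cmc_2$ and $\Cmc_5$, the types recorded in Table~\ref{tbl:po-c2}; and in $\Cmc_3$, the extension-field types $\Omega_{n/t}^{\e}(q^t){\cdot}t$ ($t=2,3$) and $\GU_{n/2}(q){\cdot}2$. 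I would treat these in turn, seeking a contradiction in each.

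For a parabolic $P_m$, Lemma~\ref{lem:min-deg} gives $v>q^2+q+1$ while Lemma~\ref{lem:subdeg} produces a subdegree equal to a power of $p$; since $r$ is prime and divides every nontrivial subdegree (Lemma~\ref{lem:six}(d)), this forces $r=p$, contradicting Lemma~\ref{lem:coprime}. The one case needing extra care is $X=\POm_n^{+}(q)$ with $n/2$ odd, which is excluded from Lemma~\ref{lem:subdeg}; here I would invoke Remark~\ref{rem:subdeg} (after \cite[Lemma 2.6]{a:Saxl2002}), to the effect that the relevant parabolics still admit a $p$-power subdegree. For the nonsingular-subspace stabiliser $N_m$, \cite[Section~4.1]{b:KL-90} gives $H_0$ of shape ${}^{\hat{}}\Omega_m^{\e_1}(q)\times\Omega_{n-m}^{\e_2}(q)$ up to scalars, so \eqref{eq:v} yields $v>q^{m(n-m)}$ while Lemma~\ref{lem:coprime} bounds $r$ by $q^{(n-m)/2}+1$; the inequality $v<r^2$ then fails at once whenever $m\geq 2$. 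The borderline is $m=1$ (a nonsingular point, $q$ odd), where $v$ and $r^2$ are both of order $q^{n-1}$ and the crude estimate is inconclusive; here I would split on the prime $r\mid|H_0|_{p'}$. Either $r$ divides some factor $q^i\pm1$ with $i\leq(n-3)/2$, whence $r\leq q^{(n-3)/2}+1$ and $r^2<v$; or $r$ divides the dominant factor $q^{(n-1)/2}\pm1$ of the $\Omega^{\e}$ constituent, and then, since $r\mid v-1$ by Lemma~\ref{lem:six}(a) while the pertinent $v-1$ is, up to a factor $2$, coprime to $q^{(n-1)/2}\pm1$, no odd prime $r$ can divide both, a contradiction.

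For the $\Cmc_2$ and $\Cmc_5$ subgroups the argument is uniform: using the lower bounds $l_v$ on $v$ and the upper bounds $u_r$ on $r$ of Table~\ref{tbl:po-c2} (the latter justified by $r\mid|H_0|_{p'}$), the inequality $v<r^2$ eliminates every infinite family. The $\Cmc_3$ subgroups are dispatched by the same squeeze applied to the orders of $\Omega_{n/t}^{\e}(q^t){\cdot}t$, the unitary type $\GU_{n/2}(q){\cdot}2$ being removed in odd characteristic by the parity obstruction of case~(4) of Proposition~\ref{prop:psp}: the index of a parabolic of $\GU_{n/2}(q)$ is even while $v-1$ is odd. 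What survives is the finite list of small groups in the lower rows of Table~\ref{tbl:po-c2} (such as $\POm_8^{+}(2)$, $\POm_9(3)$ and $\POm_{12}^{-}(2)$); for each I would read off $v$ from \eqref{eq:v}, pin down $r$ from $r\mid\gcd(v-1,|H_0|_{p'})$, and then eliminate the candidate through the constraints $vr=bk$, $r(k-1)=\lambda(v-1)$ and $k\leq r$, either by exhibiting that $X$ has no subgroup of the required index or by a direct check in \textsf{GAP}~\cite{GAP4}.

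The main obstacle is not any single deep idea but the sheer branching of the orthogonal case: the types $\circ$, $+$, $-$ combine with the parities of $m$ and of $q$ to fan the $N_m$ and $\Cmc_2$ analyses into many sign-dependent subcases whose order formulas must be tracked exactly. Within this, the two genuinely delicate points are the nonsingular-point stabiliser $N_1$ in odd characteristic, where one must replace the order inequality $v<r^2$ by the $\gcd$ computation above, and the excluded parabolic $\POm_n^{+}(q)$ with $n/2$ odd, where the clean $p$-power-subdegree argument is unavailable.
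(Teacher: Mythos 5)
Your overall skeleton matches the paper's proof: the same reduction to $\Cmc_{1},\Cmc_{2},\Cmc_{3},\Cmc_{5}$ via Lemma~\ref{lem:coprime}, Proposition~\ref{prop:class-s} and \cite{a:AB-Large-15}, the same $p$-power-subdegree argument for most parabolics, and the same $l_{v}>u_{r}^{2}$ squeeze for Table~\ref{tbl:po-c2} (which in fact already kills the small sporadic rows, so no \textsf{GAP} check is needed). However, the two subcases you yourself single out as delicate are exactly where your argument breaks. First, for $X=\POm_{2m}^{+}(q)$ with $m$ odd and $H=P_{m}$ or $P_{m-1}$, you propose to invoke Remark~\ref{rem:subdeg} to recover a $p$-power subdegree. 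That remark only asserts the property for certain parabolics of $E_{6}(q)$, and for the stabiliser of a maximal totally singular subspace in $\D_{m}(q)$ with $m$ odd the property genuinely fails: two maximal totally singular subspaces in the same $X$-orbit meet in dimension $\equiv m\pmod 2$, so there is no suborbit of $p$-power size. The paper instead extracts from \cite[p.~332]{a:Saxl2002} a subdegree dividing $q^{m}-1$, whence $r\leq q^{m}-1$ and $q^{m(m-1)/2}<q^{2m}$ gives the contradiction. Your route has no valid substitute for this step.

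Second, your treatment of the nonsingular-point stabiliser $N_{1}$ rests on a false coprimality claim. For $n=2m+1$ odd one has $v=q^{m}(q^{m}+\e 1)/2$, and a direct factorisation gives
\begin{equation*}
v-1=\frac{(q^{m}-\e 1)(q^{m}+2\e 1)}{2},
\end{equation*}
so $v-1$ is divisible by $(q^{m}-\e 1)/2$; the ``dominant factor'' $q^{m}-\e 1$ of $|\Omega_{2m}^{\e}(q)|$ is therefore very far from being coprime to $v-1$, and an odd prime $r$ dividing it is entirely consistent with Lemma~\ref{lem:six}(a). Your dichotomy thus does not close this case. (It can be repaired: since $q$ is odd here, $q^{m}-\e 1$ is even, so any odd prime divisor satisfies $r\leq (q^{m}-\e 1)/2$ and then $r^{2}<v$; the paper reaches the same bound $r\leq (q^{m}-\delta 1)/2$ via the subdegrees listed in \cite[pp.~331--332]{a:Saxl2002}, and uses the analogous subdegree bound $r\leq (q^{m-1}+1)/\gcd(2,q-1)$ in even dimension.) The remaining parts of your proposal --- the $N_{m}$ case for $m\geq 2$, the $\Cmc_{2}$/$\Cmc_{5}$ squeeze, and the $\Cmc_{3}$ types, where your swap of the Tits-lemma parity argument between the $\GU_{n/2}(q)$ and $\Omega_{n/2}^{\e}(q^{2})$ types is harmless since the order squeeze works for both --- are sound.
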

\begin{proof}
Let $H_{0}=H\cap X$, where $H=G_{\alpha}$ for some point $\alpha$ of $\Dmc$. Then by Lemma \ref{lem:coprime}, Proposition~\ref{prop:class-s} and \cite[Theorem 2.7 and Proposition 4.23]{a:AB-Large-15}, one of the following holds:
\begin{enumerate}[\rm (1)]
\item $H \in \Cmc_{1}$;
\item $H$ is a $\Cmc_2$-subgroup of type $\O_{n/t}^{\e'}(q)\wr\S_t$ and one of the following holds:
\begin{enumerate}[(a)]
\item $t=2$;
\item $(n, t, q, \e, \e')=(12, 3, 2, -, -), (10, 5, 2, -, -)$ or $(8, 4, 2, +, -)$;
\item $n=t$ and either $(n, q)= (7, 5)$, or $7\leq n \leq 13$ and $q=3$;
\end{enumerate}
\item $H$ is a $\Cmc_2$-subgroup of type $\GL_{n/2}(q)$ with $\e=+$;
\item $H$ is a $\Cmc_3$-subgroup of type $\O_{n/t}^{\e'}(q^2)$ or $\GU_{n/2}(q)$;
%\item $H$ is a $\Cmc_4$-subgroup of type $\Sp_{n/2}(q)\otimes \Sp_{2}(q)$ and $(n, \e) \in\{(12, +), (8, +)\}$,
\item $H$ is a $\Cmc_5$-subgroup of type $\O_{n}^{\e'}(q_{0})$ with $q=q_{0}^2$.
\end{enumerate}

If $H$ is a $\Cmc_i$-subgroup, for $i=2,5$, then   we apply \cite[Propositions 4.2.11, 4.2.14, 4.2.15, 4.5.8 and 4.5.10]{b:KL-90}, and obtain the pairs $(X, H\cap X)$ listed in Table~\ref{tbl:po-c2}. For each such $H\cap X$, by \cite[Lemma 4.2 and Corollary 4.3]{a:AB-Large-15}, \eqref{eq:v} and Lemma \ref{lem:coprime}, we obtain a lower bound $l_{v}$ of $v$ and an upper bound $u_{r}$ of $r$ as in the fourth and fifth column of Table~\ref{tbl:po-c2}, and then we easily observe that $l_{v}>u_{r}^{2}$, and this violates Lemma~\ref{lem:six}(c). This leaves the $\Cmc_{i}$-subgroups for $i=1,3$, which we analyse each case separately. \smallskip

\noindent\textbf{(1)} Let $H$ be in $\Cmc_{1}$. Then $H$ stabilises a totally singular $i$-subspace with $2i\leq n$ or a non-singular subspace.\smallskip

Assume first that $H$ stabilises a totally singular $i$-subspace. If $n$ is odd, we argue exactly as in the symplectic case. Let now $n=2m$ and suppose that $i<m$. Then $H=P_{i}$ unless $i=m-1$ and $\e=+$, in this case, $H=P_{m, m-1}$. Note by Lemma~\ref{lem:subdeg} that there is a subdegree which is a power of $p$ (except in the case where $\e=+$, $n/2$ is odd and $H=P_{m}$ or $H=P_{m-1}$). Then Lemma~\ref{lem:six}(d) implies that $r=p$ but this is impossible by Lemma~\ref{lem:coprime}.
% . It follows from Lemma~\ref{lem:six}(c) that $q^2+q+1<\lambda v<r^2=p^2$, and so $q^{2}+q+1<p^2$, which is a contradiction.

%By Lemma~\ref{lem:min-deg}, we have that $v>q^2+q+1$. On the other hand, Lemma~\ref{lem:subdeg} says that there is a subdegree which is a power of $p$ (except in the case where $\e=+$, $n/2$ is odd and $H=P_{m}$ or $H=P_{m-1}$). Since $r$ is prime, Lemma~\ref{lem:six}(d) implies that $r=p$. It follows from Lemma~\ref{lem:six}(c) that $q^2+q+1<\lambda v<r^2=p^2$, and so $q^{2}+q+1<p^2$, which is a contradiction.\smallskip

Assume now that $H=P_{m}$ when $X=\POm_{2m}^{+}(q)$. Note here that $P_{m}$ and $P_{m-1}$ are the stabilisers of totally singular $m$-spaces from the two different $X$-orbits. Then by \cite[Proposition 4.1.20]{b:KL-90} and \eqref{eq:v}, we have that $v>q^{m(m-1)/2}$. We conclude by \cite[p. 332]{a:Saxl2002} that $G$ has a subdegree of power $p$ when $m$ is even. This case again can be ruled out by Lemma~\ref{lem:coprime}.  This leaves the case where $m\geq 5$ is odd in which case by \cite[p. 332]{a:Saxl2002}, $G$ has a subdegree dividing $q^{m}-1$. Lemma \ref{lem:six}(d) implies that $r\leq q^{m}-1$, and so by Lemma \ref{lem:six}(c), $q^{m(m-1)/2}\leq v<r^2\leq (q^{m}-1)^2<q^{2m}$, then $q^{m(m-1)/2}<q^{2m}$, that is to say, $m^2<5m$, which is impossible.\smallskip

Assume finally that $H$ is the stabiliser of a non-singular $i$-subspace. If $i=1$, then by \eqref{eq:v}, we have that $v=q^{m}(q^{m}+\delta1)/2$ if $n=2m+1$ is odd and $v=q^{m-1}(q^{m}-\delta1)/\gcd(2, q-1)$ if $n=2m$ is even.  According to \cite[p.331-332]{a:Saxl2002}, $r\leq (q^{m}-\delta1)/2$ if $n$ is odd and $r\leq  (q^{m-1}+1)/\gcd(2, q-1)$ if $n$ is even, and so Lemma~\ref{lem:six}(c) implies that
$q^{m}(q^{m}+\delta1)<q^{2m}$ if  $n$ is odd and $q^{m-1}(q^{m}-\delta1)<q^{2m-2}+2q^{m-1}+1$ if $n$ is even.
%. Thus Lemma \ref{lem:six}(c) implies that
%\begin{align*}
%q^{m}(q^{m}+\delta1)\leq 2\lambda v<2r^2\leq \frac{(q^{m}-\delta1)^2}{2}<q^{2m},\text{ if } n=2m+1 \text{, and} \\
%q^{m-1}(q^{m}-\delta1)\leq \gcd(2, q-1)v<\gcd(2, q-1)r^2\leq \frac{(q^{m-1}+1)^2}{\gcd(2, q-1)}\leq &q^{2m-2}+2q^{m-1},\text{ if }n=2m.
%\end{align*}
In both cases, we conclude that $r^2<v$, which is a contradiction. Hence $i\geq 2$ and by \cite[Proposition 4.1.6]{b:KL-90} and \eqref{eq:v}, we have that $v>q^{i(n-i)}/4$. It follows from \cite[Proposition 4.1.6]{b:KL-90} and Lemma \ref{lem:coprime} that $r\leq q^{(n-i)/2}+1$. So Lemma \ref{lem:six}(c) implies that $q^{i(n-i)}<4v<4r^2\leq 4(q^{(n-i)/2})+1)^2<q^{n-i+4}$. Thus $(i-1)(n-i)<4$, which is impossible.\smallskip

\noindent\textbf{(4)} Let $H$ be a $\Cmc_3$-subgroup of type $\O_{n/2}^{\e'}(q^2)$ or $\GU_{n/2}(q)$. Then by \cite[Propositions 4.3.14-4.3.18 and 4.3.20]{b:KL-90}, one of the following holds:
\begin{enumerate}[(i)]
%\item $H_{0}\cong\POm_{n/2}^{\circ}(q^2){{\cdot}}2$ with $\e=\pm$;
\item $H\cong N_{G}(\Omega_{n/2}^{\e'}(q^2))$ with $\e'=\pm$ if $n/2$ is even and empty otherwise;
\item $H\cong N_{G}(\GU_{n/2}(q))$ with $\e=(-)^{n/2}$.
\end{enumerate}

Assume that $H\cong N_{G}(\Omega_{n/2}^{\e'}(q^2))$ with $\e'=\pm$ if $n/2$ is even and empty otherwise. If $q$ is odd, we apply the Tits Lemma \ref{lem:Tits} to $H$, to see that an index of a parabolic subgroup of $H$
divides $r$, it then follows that $r$ is even, but we see that $v$ is even, and which is a contradiction with the fact that $r$ dividing $v-1$. Hence $q$ is even and therefore also $n/2$ is even. Then \eqref{eq:v} implies that $v>q^{(n^2+8n-24)/8}/4$. On the other hand Lemma~\ref{lem:coprime} implies that $r\leq q^{n/2}+1$. So by Lemma~\ref{lem:six}(c), we conclude that $q^{(n^2+8n-24)/8}< 4v<4r^2\leq 4(q^{n/2}+1)^2<q^{n+4}$, and this yields $n^2<56$, which is a contradiction with the fact that $n\geq 8$.

Assume finally that $H\cong N_{G}(\GU_{n/2}(q))$ with $\e=(-)^{n/2}$. Then by \eqref{eq:v}, we have that $v>q^{(n^2-2n)/4}/2$. Also Lemma~\ref{lem:coprime} implies that $r\leq q^{n/2}+1$. Therefore by Lemma~\ref{lem:six}(c), we conclude that $q^{(n^2-2n)/4}< 2v<2r^2\leq 2(q^{n/2}+1)^2<q^{n+3}$, and this yields $n^2<6n+12$, which is a contradiction.
\end{proof}

\begin{table}%[h]
\centering
%\scriptsize
%\scriptsize
\caption{Large maximal non-parabolic subgroups $H$ of almost simple groups $G$ with socle $X$ a finite exceptional simple group of Lie type.}\label{tbl:large-exc}
\resizebox{\textwidth}{!}{
\begin{tabular}{lllll}
\hline\noalign{\smallskip}
$X$ &
$H\cap X$ &
$l_{v}$ &
$u_{r}$ &
Condition\\
\noalign{\smallskip}\hline\noalign{\smallskip}
${}^2\B_2(q)$ &
$13{:}4$ &
$2^4{\cdot}5{\cdot}7$ &
$13$ &
$q{=}8$
\\
&
$41{:}4$ &
$2^8{\cdot}5^2{\cdot}31$ &
$41$ &
$q{=}32$
\\
&
${}^2\B_2(q^{1/3})$ &
$q(q^2+1)$ &
$q+1$ &
\\
${}^2\G_2(q)$ &
$\Al_{1}(q)$, ${}^2\G_2(q^{1/3})$&
$q^2(q^2-q+1)$ &
$q+1$\\
${}^3\D_4(q)$ &
$(q^2+\e q+1)\Al_{2}^{\e}(q)$, $\Al_{1}(q^3)\Al_{1}(q)$, $\G_2(q)$ &
$q^{9}+1$ &
$q^3+1$ &
$\e{=}\pm$\\
&
${}^3\D_4(q^{1/2})$ &
$q^{6}(q^4-q^2+1)(q^3+1)$&
$q^4+q^2+1$&
$q$ square \\
&
$7^2{:}\SL_{2}(3)$ &
$2^9{\cdot}3^3{\cdot} 13$ &
$7$ &
$q{=}2$\\
${}^2\F_4(q)$&
${}^2\B_{2}(q)\wr2$, $ \B_{2}(q){:}2$, $ {}^2\F_4(q^{1/3})$ &
$q^8(q^6+1)$ &
$q^2+1$ \\
&
$\SU_{3}(8){:}2$, $\PGU_{3}(8){:}2$  &
$2^{26}{\cdot}5^2{\cdot}7{\cdot}13^2{\cdot}37{\cdot}109$ &
$19$ &
$q{=}8$\\
&
$\Al_{2}(3){:}2$, $\Al_{1}(25)$, $\A_{6}{\cdot}2^2$, $5^2{:}4{\cdot}\A_{4}$  &
$2^7{\cdot}5^2$ &
$13$ &
$q{=}2$\\
$\G_2(q)$ &
$\Al_{2}^{\e}(q)$ &
$q^{3}(q^{3}+\e1)/2$ &
$q^{2}-\e q+1$ \\
&
${}^2\G_2(q)$, $\Al_{1}(q)^{2}$, $\G_2(q^{1/b})$&
$q^3(q^3-1)(q+1)$ &
$q^3+1$ &
$b{=}2, 3$\\
&
$2^{3}{\cdot}\Al_{2}(2)$ &
$2^3{\cdot}3^2{\cdot}7^2$ &
$3$ &
$q{=}3$\\
&
$\Al_{1}(13)$, $\J_{2}$ &
$2^5{\cdot}13$ &
$13$ &
$q{=}4$\\
&
$\G_2(2)$, $2^{3}{\cdot}\Al_{2}(2)$ &
$5^6{\cdot}31$ &
$7$ &
$q{=}5$\\
&
$\G_2(2)$ &
$2^2{\cdot}7^5{\cdot}19{\cdot}43$ &
$7$ &
$q{=}7$\\
&
$\J_{1}$ &
$2^3{\cdot}3^2{\cdot}5{\cdot}11^5{\cdot}37$ &
$19$ &
$q{=}11$ \\
$\F_4(q)$ &
$\B_4(q)$, $\D_4(q)$, $\Al_{1}(q)\C_3(q)$, $\C_{4}(q)$, $\C_{2}(q^2)$, ${\C_{2}(q)}^2$, ${}^2\F_4(q)$ &
$q^{8}(q^8+q^4+1)$ &
$q^6+1$ &
\\
&
${}^3\D_4(q)$,  $\F_4(q^{1/b})$, $\Al_{1}(q)\G_{2}(q)$ &
$q^{12}(q^{8}-1)(q^4+1)$ &
$q^{8}+q^{4}+1$ &
$b=2, 3$\\
%
%&
%${}^3\D_4(2)$ &
%$2^{12}{\cdot}3^2{\cdot}5^2{\cdot}17$ &
%$13$ &
%$q{=}3$
%\\
%
&
$\Al_{3}(3)$, ${}^3\D_4(2)$, $\D_{4}(2)$, $\A_{9-10}$, $\Al_{3}(3)$, $\J_{2}$, $\S_{6}{\wr}\S_{2}$ &
$2^{11}{\cdot}7{\cdot}13{\cdot}17$ &
$13$ &
$q{=}2$
\\
$\E_6^{\e}(q)$ &
$\Al_{1}(q)\Al_{5}^{\e}(q)$, $\F_4(q)$, $(q-\e1)\D_5^{\e}(q)$, $\C_4(q)$ &
$q^{12}(q^9-\e1)$ &
$q^6+1$ &
$\e{=}\pm$\\
&
$(q^2+\e q+1){\cdot}{}^3\D_4(q)$ &
$q^{24}(q^5-\e1)$ &
$q^8+q^4+1$ &
$(\e,q){\neq}(-,2)$
\\
&
$(q-\e1)^2{{\cdot}}\D_4(q)$ &
$q^{24}(q^{12}-1)$ &
$q^3+1$ &
$(\e,q){\neq} (+,2)$
\\
&
$\E_6^{\e'}(q^{1/2})$&
$q^{36}(q^{12}+1)$ &
$q^9+1$&
$\e{=}+$, $\e'{=}\pm$
\\
&
$\E_6^{\e}(q^{1/3})$ &
$q^{36}(q^{18}-\e1)$&
$q^9+1$ &
$\e{=}\pm$\\
&
$\Fi_{22}$, $\B_3(3)$, $\A_{12}$, $\J_{3}$  &
$2^{16}{\cdot}3^2{\cdot}7{\cdot}13{\cdot}19$&
$13$ &
$(\e, q){=}(-,2)$
\\
$\E_7(q)$ &
$(q-\e1)\E_{6}^{\e}(q)$,  $\Al_{1}(q)\D_6(q)$, $\Al_{7}^{\e}(q)$, $\Al_{1}(q)\F_4(q)$, $\E_7(q^{1/b})$  &
$q^{27}(q^{14}-1)$ &
$q^{15}+1$ &
$\e{=}\pm$, $b{=}2,3$
\\
&
$\Fi_{22}$ &
$2^{46}{\cdot}3^2{\cdot}7^2{\cdot}19{\cdot}31{\cdot}43{\cdot}73{\cdot}127$ &
$13$ &
\\
$\E_8(q)$ &
$\Al_{1}(q)\E_7(q)$, $\D_{8}(q)$, $\Al_{2}^{\e}(q)\E_6^{\e}(q)$, $\E_8(q^{1/b})$ &
$q^{56}(q^{30}-1)$ &
$q^{15}+1$ &
$\e{=}\pm$, $b{=}2,3$
\\
\noalign{\smallskip}\hline
\end{tabular}
}
\end{table}

\subsection{Exceptional groups}\label{sec:exceptional}
In this section, we suppose that $\Dmc$ is a nontrivial $2$-design with prime replication number $r$ and $G$ is an almost simple automorphism group $G$ whose socle $X$ is a finite exceptional simple group of Lie type. Here, it is convenient to adopt the Lie notation for groups of Lie type, so for example, we will write $\Al_{n-1}^{-}(q)$ in place of $\PSU_n(q)$, $\D_{n}^{-}(q)$ instead of $\POm_{2n}^{-}(q)$, and $E_{6}^{-}(q)$ for ${}^{2}\E_{6}(q)$. Also recall that the type of a subgroup $H$ of $G$ provides an approximate description of the group theoretic structure of $H$.

\begin{proposition}\label{prop:exceptional}
There is no nontrivial $2$-design with prime replication number $r$ admitting a flag-transitive almost simple automorphism group whose socle is an exceptional finite simple group of Lie type.
\end{proposition}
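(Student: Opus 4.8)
The plan is to combine the classification of large maximal subgroups of exceptional groups with the divisibility and bounding constraints of Lemma~\ref{lem:six}. By Proposition~\ref{prop:flag} the point stabiliser $H=G_{\alpha}$ is maximal in $G$, and by Corollary~\ref{cor:large} it is large, that is $|G|\leq|H|^{3}$. Hence $H$ is either a maximal parabolic subgroup or one of the non-parabolic subgroups listed in Table~\ref{tbl:large-exc}, which can be read off from \cite{a:ABD-Exp}. I will treat these two situations separately: in the first I show that $r=p$ is untenable, and in the second that $r\neq p$ and then that $v<r^{2}$ fails.

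First I would dispose of the parabolic case. If $H$ is a maximal parabolic subgroup, then by Lemma~\ref{lem:subdeg} (together with Remark~\ref{rem:subdeg}, which supplies the missing $\E_6(q)$ parabolics when $G$ contains a graph automorphism or $H=P_{i}$ with $i\in\{2,4\}$, the remaining $\E_6(q)$ parabolics being handled by the subdegree computations of \cite{a:Saxl2002}) there is a nontrivial subdegree equal to a power of $p$. Since $r$ is prime and, by Lemma~\ref{lem:six}(d), divides every nontrivial subdegree, this forces $r=p$, and hence $r^{2}=p^{2}\leq q^{2}$. On the other hand, the index of any maximal parabolic subgroup of an exceptional group exceeds $q^{2}$ (the smallest such index being $q^{2}+1$, attained for ${}^2\B_2(q)$). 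Thus $v>q^{2}\geq r^{2}$, contradicting $\lambda v<r^{2}$ from Lemma~\ref{lem:six}(c). Therefore $H$ is not parabolic.

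Next, with $H$ one of the entries of Table~\ref{tbl:large-exc}, I would first record that $r\neq p$. Indeed, for each such $H$ the $p$-part of $|H\cap X|$ is strictly smaller than $|X|_{p}$, so $p$ divides $v=|X{:}H\cap X|$; since $\gcd(r,v)=1$ by Lemma~\ref{lem:six}(a), we get $r\neq p$. (Alternatively, if $r=p$ then $H\cap X$ would have index prime to $p$, and Lemma~\ref{lem:Tits} would force $H$ into a parabolic, which is excluded.) Consequently $r$ is a prime dividing $|\Out(X)|\cdot|H\cap X|$ coprime to $p$; as $|\Out(X)|$ is far too small to meet $v<r^{2}$, in fact $r$ divides $|H\cap X|_{p'}$, giving the explicit upper bound $u_{r}$ recorded in the fourth column of Table~\ref{tbl:large-exc}. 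For every row, computing the lower bound $l_{v}\leq v$ of the third column, one checks $l_{v}>u_{r}^{2}$; combined with $v<r^{2}\leq u_{r}^{2}$ from Lemma~\ref{lem:six}(c) this forces $l_{v}\leq v<u_{r}^{2}$, a contradiction, so no design survives.

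I expect the main obstacle to be twofold. The delicate point in the parabolic analysis is the collection of $\E_6(q)$ parabolics excluded from Lemma~\ref{lem:subdeg}, where the existence of a $p$-power subdegree must be argued by hand from the subdegree structure in \cite{a:Saxl2002}. In the non-parabolic analysis the genuinely tight rows are the reductive subgroups of maximal rank for which $u_{r}$ is a large cyclotomic-type value such as $q^{8}+q^{4}+1$ or $q^{15}+1$, where the inequality $l_{v}>u_{r}^{2}$ must be verified carefully across all the infinite families $\G_2(q)$, $\F_4(q)$, $\E_6(q)$, $\E_7(q)$, $\E_8(q)$ and their twisted analogues. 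For the finitely many rows with a fixed small $q$ that are not settled by this crude inequality, I would sharpen the argument using $r\mid v-1$, the refined divisibility from Lemma~\ref{lem:divisible} applied to the quasi-simple Lie-type factor of $H\cap X$, and a direct check in \textsf{GAP} \cite{GAP4}.
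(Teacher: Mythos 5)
Your overall strategy coincides with the paper's: reduce to $H=G_{\alpha}$ maximal and large, split into the parabolic case (where a $p$-power subdegree together with Lemma~\ref{lem:six}(d) forces $r=p$ and then $v>r^{2}$) and the non-parabolic case (where $H\cap X$ is read off from \cite{a:ABD-Exp} as in Table~\ref{tbl:large-exc} and the comparison $l_{v}>u_{r}^{2}$ eliminates every row). The non-parabolic half of your argument, including the observation that $r\neq p$ and hence that $r$ divides $|H\cap X|_{p'}$, is exactly what the paper does.

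The genuine gap is in your treatment of the $\E_6(q)$ parabolics that Lemma~\ref{lem:subdeg} and Remark~\ref{rem:subdeg} do not cover, namely the maximal parabolics with Levi factor of type $\D_5$ or $\Al_1\Al_4$ when $G$ contains no graph automorphism. You assert that for these a $p$-power subdegree can still be extracted from the subdegree computations in \cite{a:Saxl2002}; it cannot, because these are precisely the parabolics for which the conclusion of Lemma~\ref{lem:subdeg} genuinely fails. For instance, $\E_6(q)$ acting on the cosets of the $\D_5$-parabolic is a rank-$3$ action whose two nontrivial subdegrees are $q(q^3+1)(q^8-1)/(q-1)$ and $q^{8}(q^4+1)(q^5-1)/(q-1)$, neither of which is a power of $p$, so Lemma~\ref{lem:six}(d) does not yield $r=p$ and your contradiction evaporates. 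The paper closes these two cases by a direct estimate instead: since $r$ divides $|H|$ and the $p'$-part of a $\D_5$- or $\Al_1\Al_4$-type Levi factor forces $r\leq q^{5}+1$, while $v>q^{14}$, one still obtains $v>r^{2}$. (Your cases could alternatively be rescued by noting that $r$ must divide the greatest common divisor of the two subdegrees above, which again bounds $r$ well below $\sqrt{v}$; but some such replacement argument is required, and the step as you wrote it is false.)
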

\begin{proof}
Suppose that $\Dmc$ is a nontrivial $2$-design with prime replication number $r$. Let $G$ be a flag-transitive almost simple automorphism group of $\Dmc$ with simple socle $X$, where $X$ is a finite exceptional simple group of Lie type. Then by Proposition~\ref{prop:flag} and Corollary~\ref{cor:large}, the point-stabliliser $H=G_{\alpha}$ is a large maximal subgroup of $G$. Suppose first that $H$ is a parabolic subgroup. If $X\neq E_{6}(q)$, then Lemma~\ref{lem:subdeg} implies that $r$ divides $\lambda p^{c}$ for some positive integer $c$, and so $r=p$, and hence by inspecting the value of $v$ in each case (see for example \cite[Table 5]{a:ABD-Exp}), we observe that $v>r^{2}$, which is a contradiction. If $X= E_{6}(q)$, then by Remark~\ref{rem:subdeg}, we only need  to consider the cases where the Levi factor of $H$ is of type $\Al_{1}\Al_{4}$ or $\D_{5}$ in which cases $r\leq q^{5}+1$ and $v>q^{14}+1$, and hence $v>r^{2}$, which is a contradiction. Therefore, $H$ is not parabolic, and hence we apply \cite[Theorem 1.6]{a:ABD-Exp} and obtain $H\cap X$ as listed in the second column of Table~\ref{tbl:large-exc}. By the same manner as in the classical groups, for each possible subgroup $H$, we can obtain a lower bound $l_{v}$ for $v$ and an upper bound $u_{r}$ for $r$, and then we easily observe that $\lambda v<r^{2}$ cannot hold. For example, if $X=F_4(q)$ and $H$ is of type $\Omega_9(q)$, then by Lemma \ref{lem:six}(c), we have that $q^{16}<q^{8}(q^{8}+q^{4}+1)=v<r^{2}\leq (q^{4}+1)^{2}$, and so $q^{16} <(q^{4}+1)^{2}$, which is impossible.
\end{proof}

\section*{Acknowledgements}

The authors would like to thank anonymous referees for providing us helpful and constructive comments and suggestions.
%\section*{References}

%\bibliographystyle{abbrv}
%\bibliography{references}

\end{document}